\newtheorem{theorem}{Theorem}[section]
\newtheorem*{theorem*}{Theorem}
\newtheorem{lemma}[theorem]{Lemma}
\newtheorem*{lemma*}{Lemma}
\newtheorem{proposition}[theorem]{Proposition}
\newtheorem*{proposition*}{Proposition}
\newtheorem{corollary}[theorem]{Corollary}
\newtheorem*{corollary*}{Corollary}
\newtheorem*{claim*}{Claim}
\newtheorem*{fact*}{Fact}
\newtheorem{conjecture}[theorem]{Conjecture}
\newtheorem*{conjecture*}{Conjecture}
\theoremstyle{definition}
\newtheorem{definition}[theorem]{Definition}
\newtheorem*{definition*}{Definition}
\newtheorem*{example*}{Example}
\newtheorem{remark}[theorem]{Remark}
\newtheorem*{remark*}{Remark}
\newtheorem*{question*}{Question}
\newcommand{\abs}[1]{\left\lvert#1\right\rvert}
\DeclareMathOperator{\Irr}{Irr}
\DeclareMathOperator{\End}{End}
\DeclareMathOperator{\rank}{rank}
\DeclareMathOperator{\toral}{toral}
\DeclareMathOperator{\Frob}{Frob}
\DeclareMathOperator{\Root}{root}
\DeclareMathOperator{\Ad}{Ad}
\DeclareMathOperator{\Aa}{a}
\DeclareMathOperator{\ind}{ind}
\DeclareMathOperator{\vol}{vol}
\DeclareMathOperator{\Hom}{Hom}
\address{graduate school of mathematical science, the university of tokyo, 3-8-1 komaba, meguroku, tokyo 153-8914, japan.}
\email{kohara@ms.u-tokyo.ac.jp}
\subjclass[2020]{22E50}
\begin{document}
\title{on the formal degree conjecture for non-singular supercuspidal representations}
\author{Kazuma Ohara}
\date{}
\maketitle
\begin{abstract}
We prove the formal degree conjecture for non-singular supercuspidal representations based on Schwein's work \cite{2021arXiv210100658S} proving the formal degree conjecture for regular supercuspidal representations. The main difference between our work and Schwein's work is that in non-singular case, the Deligne--Lusztig representations can be reducible, and the $S$-groups are not necessary abelian. Therefore, we have to compare the dimensions of irreducible constituents of the Deligne--Lusztig representations and the dimensions of irreducible representations of $S$-groups.
\end{abstract}
\section{Introduction}
Let $F$ be a non-archimedean local field of residue characteristic $p$, and let $G$ be a connected reductive group over $F$. For an essentially discrete series representation $\pi$ of $G(F)$, we can consider a positive constant $d(\pi)$ called the \emph{formal degree} of $\pi$.
In some sense, we can consider the formal degree as a generalization of the dimension of a finite-dimensional representation.

Recall that there is a conjectural correspondence called the \emph{local Langlands correspondence} between the set of irreducible representations of $G(F)$, and a set of pairs $(\varphi, \rho)$, where $\varphi\colon W_F \times \text{SL}_2(\mathbb{C}) \to {^{L}G}$ is an $L$-parameter, and $\rho$ is an irreducible representation of the $S$-group $\pi_0(S_{\varphi}^{\text{sc}})$, which is determined by $\varphi$.
In \cite{MR2350057}, Hiraga, Ichino, and Ikeda proposed a conjecture called the \emph{formal degree conjecture}, which predicts that the formal degree $d(\pi)$ can be described by using the pair $(\varphi, \rho)$ attached to $\pi$. More precisely, they predicted that the formal degree $d(\pi)$ of $\pi$ is equal to
\[\frac{\text{dim}(\rho)}{\abs{\pi_0(S_{\varphi}^{\natural})}} \cdot \abs{\gamma(0, \pi, \text{Ad}, \psi)},\]
where $\gamma$ denotes the $\gamma$-factor defined in \cite[1]{MR2350057}, and $\pi_0(S_{\varphi}^{\natural})$ is a finite group determined by $\varphi$.

The formal degree conjecture depends itself on a conjectural correspondence, the local Langlands correspondence.
Hence, in order to verify the formal degree conjecture, we must first have access to a candidate for the local Langlands correspondence. 
In \cite{MR4013740}, Kaletha defined and constructed a large class of supercuspidal representations which he calls \emph{regular}, and gave a candidate for the local Langlands correspondence for regular supercuspidal representations. 
Kaletha's construction starts with a pair $(S, \theta)$, where $S$ is a tame elliptic maximal torus, and $\theta$ is a character of $S(F)$ which satisfy some conditions. From the pair $(S, \theta)$, he constructed a generic cuspidal $G$-datum of \cite{MR1824988}, and then he obtained a regular supercuspidal representation by Yu's construction. 
In \cite{2021arXiv210100658S}, Schwein calculated the formal degrees of supercuspidal representations which are obtained by Yu's construction and proved that the formal degree conjecture is true for the local Langlands correspondence in \cite{MR4013740}.

On the other hand, in \cite{2019arXiv191203274K}, Kaletha defined and constructed a wider class of supercuspidal representations which he calls \emph{non-singular}, and also gave a candidate for the local Langlands correspondence for non-singular supercuspidal representations. In this paper, we prove that the formal degree conjecture also holds for the local Langlands correspondence in \cite{2019arXiv191203274K}.

Kaletha's construction of non-singular supercuspidal representations and the local Langlands correspondence for them are very similar to the one for regular supercuspidal representations.  Therefore, we can apply the arguments in \cite{2021arXiv210100658S} in our case. However, both in the automorphic side and the Galois side, there are some differences.
In the case of regular supercuspidal representations, the Deligne--Lusztig representations appearing in the construction are irreducible, so we can calculate the formal degrees of the regular supercuspidal representations by using the dimension formula for Deligne--Lusztig representations. Moreover, in this case, the $S$-groups are abelian, so the factor of the dimension $\text{dim}(\rho)$ of the irreducible representation of the $S$-group in the formal degree conjecture is equal to $1$.
In the non-singular case, however, the Deligne--Lusztig representations appearing in the construction are not necessary irreducible, and the $S$-groups are not necessary abelian. 
Kaletha constructed the bijections between the sets of irreducible constituents of the Deligne--Lusztig representations and some sets of irreducible representations of the $S$-groups, from which he constructed the local Langlands correspondence for non-singular supercuspidal representations. We compare the dimensions of an irreducible constituent of the Deligne--Lusztig representation and the corresponding representation of the $S$-group, and by using this comparison, we prove the formal degree conjecture for non-singular case.

We sketch the outline of this paper. In Section~\ref{secfdc}, we state the formal degree conjecture. In Section~\ref{secyu}, we explain Yu's construction of supercuspidal representations \cite{MR1824988} and the calculation of the formal degrees of these representations \cite{2021arXiv210100658S}. In Section~\ref{secnonsing}, we review the definition and the construction of non-singular supercuspidal representations \cite{2019arXiv191203274K}. Here, we explain the description of the sets of irreducible constituents of the Deligne--Lusztig representations in \cite{2019arXiv191203274K}. In Section~\ref{seclpara}, we introduce the notions of torally wild $L$-parameters and torally wild $L$-packet data, which are defined in \cite{2019arXiv191203274K}, and explain the way to construct $L$-packets from these data. In Section~\ref{secend}, we explain the parametrization of the elements in the $L$-packet by irreducible representations of the $S$-group. Section~\ref{seclem} is our essential part.  In this section, we prove a variant of \cite[Proposition~B.3]{2019arXiv191203274K}. Using the results of this section, we calculate the formal degrees of non-singular supercuspidal representations in Section~\ref{secfdns}. Finally, in Section~\ref{seccomp}, we prove that the formal degree conjecture holds for the local Langlands correspondence for non-singular supercuspidal representations.
\subsection*{Acknowledgment}
I am deeply grateful to my supervisor Noriyuki Abe for his enormous support and helpful advice. He checked the draft and gave me useful comments.
I would also like to thank Tasho Kaletha. He answered my question about twisted Yu's construction and sent me a preliminary copy of their paper about twisted Yu's construction.
I am supported by the FMSP program at Graduate School of Mathematical Sciences, the University of Tokyo.
\section{Notation and assumptions}
Let $F$ be a non-archimedean local field of residue characteristic $p$, and let $k_F$ be its residue field.
We fix an algebraic closure $\overline{F}$ of $F$ and write $F^{\text{sep}}$ for the separable closure of $F$ in $\overline{F}$. We denote by $\overline{k_F}$ be the residue field of $F^{\text{sep}}$.
We write $F^{u}$ for the maximal unramified extension of $F$. For a field extension $E$ over $F$, we write $\mathcal{O}_{E}$ for its ring of integers.

We denote by $\Gamma_F$ the absolute Galois group $\text{Gal}(F^{\text{sep}}/F)$ of $F$, and by $W_F$ the Weil group of $F$.
We also denote by $I_F$ the inertia subgroup of $\Gamma_F$, by $P_F$ the wild inertia subgroup of $\Gamma_F$, and by $\text{Frob}$ the geometric Frobenius element in $\Gamma_F/I_F$.
For $r\ge 0$, let $W_{F}^{r}$ be the $r$-th subgroup of $W_F$ in its upper numbering filtration computed with respect to the unique discrete valuation $\text{ord}_{F}$ on $F^{\times}$ with the value group $\mathbb{Z}$.
For a representation $\pi$ of $W_F$, the depth of $\pi$ is defined as
\[\text{depth}(\pi)= \inf\{r\in \mathbb{R}_{\ge 0} \mid \pi(W_{F}^{r})=1\}.\]
For a field extension $E$ over $F$, we compute the upper numbering filtration $\{W_{E}^{r}\}_{r\in \mathbb{R}_{\ge 0}}$ of the Weil group $W_E$ of $E$ by using the unique extension of $\text{ord}_{F}$ to $E^{\times}$.  We compute the depth of a representation of $W_E$ by using this filtration.
We also define the filtration $\{E_{r}^{\times}\}_{r\in \mathbb{R}_{>0}}$ of $E^{\times}$ as
\[E_{r}^{\times}=\{x\in E^{\times}\mid \text{ord}_{F}(x-1)\ge r\}.\]
Here, we write the unique extension of $\text{ord}_{F}$ to $E^{\times}$ by the same symbol.
We let $E_{r+}^{\times}=\cup_{s>r} E_{s}^{\times}$ for $r \ge 0$.

Let $G$ be a connected reductive group over $F$ that splits over a tamely ramified extension of $F$.
We denote by $G_{\text{der}}$ the derived subgroup of $G$, by $Z(G)$ the center of $G$, by $Z$ the center of $G_{\text{der}}$, and by $A$ the maximal split central torus of $G$. 
Let $G^{\text{a}}$ denote the reductive group $G/A$. For a maximal torus $T$ of $G$, we also define $T^{\text{a}}$ be the torus $T/A$.

For a connected reductive group $H$ defined over $F$ or $k_F$, we denote by $\text{dim}(H)$ the dimension of $H$ and denote by $\text{rank}(H)$ the absolute rank of $H$. 
For a maximal torus $T$ of $H$, let $X^{*}(T)$ be the character group of $T$, $X_{*}(T)$ be the cocharacter group of $T$, and $R(H, T)$ be the absolute root system of $H$ with respect to $T$.
For a root $\alpha \in R(H, T)$, we denote by $\check{\alpha}$ the corresponding coroot.
When $H$ is defined over $F$, we denote by $\widehat{H}$ and $^{L}H$ the dual group of $H$ and the $L$-group of $H$ respectively.

We assume that $p$ is an odd prime, is not a bad prime for any irreducible factor of the absolute root system of $G$ in the sense of \cite[4.1]{springer1970conjugacy}, dose not divide the number of connected components of $Z(G)$, and dose not divide the order of the fundamental group of $G_{\text{der}}$.
Let $q$ be the cardinality of $k_{F}$, and let $\text{exp}_q \colon \mathbb{R} \to \mathbb{R}$ be the function $t \mapsto q^{t}$.

For a connected reductive group $H$ over $F$, we denote by $\mathcal{B}(H, F)$ the enlarged Bruhat--Tits building of $H$ over $F$ and denote by $\mathcal{B}^{\text{red}}(H, F)$ the reduced building of $H$ over $F$.
If $T$ is a maximal, maximally split torus of $H_E := H \times_{F} E$ for a field extension $E$ over $F$, then $\mathcal{A}(T, E)$ and $\mathcal{A}^{\text{red}}(T, E)$ denote the apartment of $T$ inside the Bruhat--Tits building $\mathcal{B}(H_E, E)$ and reduced building $\mathcal{B}^{\text{red}}(H_E, E)$ of $H_E$ over $E$ respectively.
For any $y\in \mathcal{B}(H,F)$, we denote by $[y]$ the projection of $y$ on the reduced building and by $H(F)_y$ (resp.\,$H(F)_{[y]}$) the subgroup of $H(F)$ fixing $y$ (resp.\,$[y]$).
For $y\in \mathcal{B}(H, F)$ and $r\in \widetilde{\mathbb{R}}_{\ge 0}=\mathbb{R}_{\ge 0}\cup \{r+\mid r\in \mathbb{R}_{\ge 0}\}$, we write $H(F)_{y, r}$ for the Moy--Prasad filtration subgroup of $H(F)$ of depth $r$ \cite{MR1253198, MR1371680}.
In the case that $H$ is a torus, we omit the notion $y$ and write $H(F)_{r}$ for the Moy-Prasad filtration subgroup of $H(F)$ of depth $r$, which dose not depend on $y$. 
For an irreducible representation $\pi$ of $H(F)$, we denote by $\text{depth}(\pi)$ the depth of $\pi$ in the sense of \cite[Theorem~3.5]{MR1371680}.

For a group $H$ and an $H$-module $M$, we denote by $M^{H}$ and $M_{H}$ its invariant and coinvariant respectively.
If $H$ is a cyclic group generated by $\sigma \in H$, let $M^{\sigma}$ and $M_{\sigma}$ denote $M^{H}$ and $M_{H}$ respectively.
  
For a finite-dimensional representation $\rho$ of a group $H$, we denote by $\text{dim}(\rho)$ the dimension of $\rho$.
Suppose that $K$ is a subgroup of $H$ and $h\in H$. We denote $hKh^{-1}$ by $^hK$. If $\rho$ is a representation of $K$, let $^h\!\rho$ denote the representation $x\mapsto \rho(h^{-1}xh)$ of $^hK$. 

Throughout the paper, we fix a prime number $\ell \neq p$ and an isomorphism $\mathbb{C} \simeq \overline{\mathbb{Q}_{\ell}}$.
\section{The formal degree conjecture}
\label{secfdc}
In this section, we explain the formal degree conjecture \cite[Conjecture~1.4]{MR2350057} of Hiraga, Ichino, and Ikeda. 

Let $(\pi, V)$ be an essentially discrete series representation of $G(F)$, i.\,e., it becomes a discrete series representation after twisting by a character of $G(F)$. We note that all supercuspidal representations of $G(F)$ are essentially discrete series representations.
When $\pi$ is a discrete series representation, the formal degree of $\pi$ associated with a Haar measure $\mu$ on $G(F)/A(F)$ is defined by the positive constant $\text{deg}(\pi, \mu)$ which satisfies 
\[\int_{G(F)/A(F)} (\pi(g)u, u')\overline{(\pi(g)v, v')} \ d\mu(g)= \text{deg}(\pi, \mu)^{-1} (u, v)\overline{(u', v')}\]
for all $u, u', v, v'\in V$, where $(\cdot, \cdot)$ denotes an invariant hermitian inner product on $V$.
In general, we define the formal degree of $\pi$ as the formal degree of any discrete series representation of $G(F)$ obtained from $\pi$ by twisting by a character of $G(F)$.
We fix a level-zero additive character $\psi$ of $F$, and let $\mu$ be the Haar measure on $G(F)/A(F)$ attached to $\psi$ as in \cite{MR1458303}. We write $d(\pi)=\text{deg}(\pi, \mu)$.
\begin{remark}
In \cite{MR2350057}, Hiraga, Ichino, and Ikeda use a different Haar measure to state the formal degree conjecture. However, the conjecture is modified in \cite{MR2425185}, in which the Haar measure above is used.
\end{remark}
Let $\varphi\colon W_F \times \text{SL}_2(\mathbb{C}) \to {^{L}G}$ be a discrete $L$-parameter.
We write $S_{\varphi}$ for the centralizer of $\varphi(W_F \times \text{SL}_2(\mathbb{C}))$ in $\widehat{G}$. 
Let $(\widehat{G})_{\text{der}}$ be the derived subgroup of $\widehat{G}$, $(\widehat{G})_{\text{ad}}$ be its adjoint group, and $(\widehat{G})_{\text{sc}}$ be the simply connected cover of $(\widehat{G})_{\text{der}}$.
Let $S_{\varphi}^{\text{ad}}$ be the image of $S_{\varphi}$ in $(\widehat{G})_{\text{ad}}$ and $S_{\varphi}^{\text{sc}}$ be the preimage of $S_{\varphi}^{\text{ad}}$ in $(\widehat{G})_{\text{sc}}$.
We also define $S_{\varphi}^{\natural}$ be the preimage of $S_{\varphi}$ via the natural map $\widehat{G^{\text{a}}} \to \widehat{G}$.
For these groups, let $\pi_0(*)$ denote the groups of connected components. 

It is believed that the $L$-parameter $\varphi$ determines a finite set $\prod_{\varphi}(G)$ of irreducible representations of $G(F)$ called $L$-packet, whose elements are indexed by a set of irreducible representations of $\pi_0(S_{\varphi}^{\text{sc}})$ \cite{MR2217572}.
We now state the formal degree conjecture, which depends on the conjectural correspondence above.
\begin{conjecture}[{\cite[Conjecture~1.4]{MR2350057}}]
\label{fdc}
Let $\varphi$ be a discrete $L$-parameter which corresponds to the $L$-packet $\prod_{\varphi}(G)$. Assume that $\pi \in \prod_{\varphi}(G)$ corresponds to the representation $\rho$ of $\pi_0(S_{\varphi}^{\text{sc}})$. Then
\[d(\pi)= \frac{\dim(\rho)}{\abs{\pi_0(S_{\varphi}^{\natural})}} \cdot \abs{\gamma(0, \pi, \Ad, \psi)},\]
where $\gamma$ is the $\gamma$-factor defined in \cite[1]{MR2350057}.
\end{conjecture}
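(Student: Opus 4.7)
The plan is to adapt Schwein's strategy for regular supercuspidal representations to the non-singular setting, by computing both sides of the conjectured equality explicitly and then comparing them. Because Kaletha's construction of a non-singular supercuspidal representation $\pi$ starts from a tame elliptic pair $(S,\theta)$ and feeds into Yu's construction, the starting point on the automorphic side is the formal-degree formula for Yu-type representations that Schwein works out. The new feature is that the induced Deligne--Lusztig representation attached to $(S,\theta)$ on the depth-zero piece need not be irreducible, so $\pi$ is built from one of its irreducible constituents, and the formal degree of $\pi$ acquires an extra factor recording the dimension of that constituent relative to the full Deligne--Lusztig representation.

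First I would fix a non-singular supercuspidal representation $\pi$ and write down $d(\pi)$ using Yu's construction and the depth-zero/positive-depth decomposition, isolating an elementary volume factor, a transfer factor between root systems, and the dimension $\dim(\kappa)$ of the depth-zero inducing representation $\kappa$ of the reductive quotient of the parahoric at a point of $\mathcal{B}(G,F)$. By the description recalled in Section~\ref{secnonsing}, $\kappa$ is an irreducible constituent of a Deligne--Lusztig induction $R_{S}^{G_y}\theta$, so $\dim(\kappa)$ differs from the classical dimension $\dim R_{S}^{G_y}\theta$ by a factor equal to the ratio of $\dim(\kappa)$ to $\dim R_{S}^{G_y}\theta$. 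I would then rewrite this factor in a form ready to be matched on the Galois side.

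Next I would compute the right-hand side. The torally wild $L$-packet datum from Section~\ref{seclpara} and the parametrization of Section~\ref{secend} identify the pair $(\varphi,\rho)$ attached to $\pi$; in particular $\rho$ corresponds to $\kappa$ under Kaletha's bijection between irreducible constituents of the Deligne--Lusztig representation and a set of irreducible representations of the $S$-group. Using this description, $|\pi_0(S_{\varphi}^{\natural})|$ and $|\gamma(0,\pi,\Ad,\psi)|$ can be computed exactly as in \cite{2021arXiv210100658S}: the adjoint $\gamma$-factor splits into a contribution from the toral piece (controlled by the discriminants and depths of $(S,\theta)$) and a contribution at depth zero (governed by the root system of the Deligne--Lusztig datum), and these match Schwein's factors verbatim because they depend only on $S$, $\theta$ and the associated root-theoretic data, not on the choice of constituent.

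All discrepancies between the two sides therefore reduce to a single identity between $\dim(\kappa)/\dim R_{S}^{G_y}\theta$ on the automorphic side and $\dim(\rho)/[\text{some index of $S$-group subquotients}]$ on the Galois side. This is the main obstacle and it is precisely what Section~\ref{seclem} is designed to produce: a variant of \cite[Proposition~B.3]{2019arXiv191203274K} comparing dimensions under Kaletha's bijection. Granting that comparison, the proof concludes by substituting into the two explicit expressions and checking that the remaining factors, which are identical to those in Schwein's regular case, cancel in the same way. I expect the hardest and most technical point to be the dimension comparison itself, since it requires tracing through the construction of the bijection between constituents of $R_S^{G_y}\theta$ and irreducible representations of the generally non-abelian $S$-group, and verifying that the relevant indices of stabilizers and component groups match on the nose.
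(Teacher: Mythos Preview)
Your plan is correct and follows essentially the same route as the paper: compute $d(\pi)$ via Schwein's formula for Yu-type supercuspidals (Proposition~\ref{thmA}), reduce the new content to a dimension comparison between the irreducible constituent of the Deligne--Lusztig representation and the corresponding $S$-group representation via the key lemma of Section~\ref{seclem}, compute $\abs{\gamma(0,\pi,\Ad,\psi)}$ exactly as in \cite{2021arXiv210100658S}, and finish by matching the residual $S$-group indices. The only points you leave implicit but which the paper must check separately are that the hypotheses of Lemma~\ref{keylem} hold (in particular that all irreducible constituents of $\kappa_{(\mathsf{S},\phi_{-1})}^{\mathsf{G^0}'_{[y]}}$ have equal dimension) and the final identity $\abs{\pi_0(S_{\varphi}^{+})/\pi_0(S_{\varphi}^{+})_{\eta}}\cdot\abs{N(jS,G)(F)_{\theta_j}/jS(F)}=\abs{\pi_0(S_{\varphi}^{\natural})}/\abs{\pi_0([\widehat{\overline{S}}]^{\natural})}$, but these are exactly the verifications carried out in Sections~\ref{secfdns} and~\ref{seccomp}.
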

\section{Yu's construction and formal degree}
\label{secyu}
In this section, we review Yu's construction of supercuspidal representations in \cite{MR1824988} and the calculations of the formal degrees of these representations, which are done in \cite{2021arXiv210100658S}.

An input for Yu's construction of supercuspidal representations of $G(F)$ is a tuple $\Psi=(\overrightarrow{G}, y, \overrightarrow{r}, \rho_{-1}, \overrightarrow{\phi})$ called a generic cuspidal $G$-datum, where
\begin{description}
\item[\bf{D1}]
$\overrightarrow{G}=\left(G^0\subsetneq G^1 \subsetneq\ldots \subsetneq G^d=G\right)$ is a sequence of twisted Levi subgroups of $G$ that split over a tamely ramified extension of $F$, i.\,e., there exists a tamely ramified extension $E$ of $F$ such that $G^i_{E}$ is split for $0\le i\le d$, and $\left(G^0_{E}\subsetneq G^1_{E} \subsetneq\ldots \subsetneq G^d_{E}=G_{E}\right)$ is a split Levi sequence in $G_{E}$ in the sense of \cite[Section~1]{MR1824988}; we assume that $Z(G^0)/Z(G)$ is anisotropic;
\item[\bf{D2}]
$y$ is a point in $\mathcal{B}(G^0, F)\cap \mathcal{A}(T, E)$ whose projection on the reduced building of $G^0(F)$ is a vertex, where $T$ is a maximal torus of $G^0$ (hence of $G^i$) whose splitting field $E$ is a tamely ramified extension of $F$; 
\item[\bf{D3}]
$\overrightarrow{r}=\left(r_0, \ldots , r_d\right)$ is a sequence of real numbers satisfying
\begin{align*}
\begin{cases}
0<r_0<r_1<\cdots <r_{d-1}\le r_{d} & (d>0),\\
0\le r_0 & (d=0);
\end{cases}
\end{align*}
\item[\bf{D4}]
$\rho_{-1}$ is an irreducible representation of $G^0(F)_{[y]}$ such that $\rho_{-1}\restriction_{G^0(F)_{y, 0}}$ is the inflation of a cuspidal representation of $G^0(F)_{y,0}/ G^0(F)_{y, 0+}$;
\item[\bf{D5}]
$\overrightarrow{\phi}=\left(\phi_0, \ldots , \phi_d\right)$ is a sequence of characters, where $\phi_i$ is a character of $G^i(F)$; we assume that $\phi_i$ is trivial on $G^i(F)_{y, r_i+}$ but non-trivial on $G^i(F)_{y, r_i}$ for $0\le i\le d-1$. If $r_{d-1}<r_d$, we assume that $\phi_d$ is trivial on $G^d(F)_{y, r_d+}$ but non-trivial on $G^d(F)_{y, r_d}$, otherwise we assume that $\phi_d=1$. Moreover, we assume that $\phi_i$ is $G^{i+1}$-generic of depth $r_i$ relative to $y$ in the sense of \cite[Section.~9]{MR1824988} for $0\le i\le d-1$. 
\end{description}
From this datum, Yu constructed a pair $(K^{d}, \rho_{d})$ consisting of a compact-mod-center open subgroup and its irreducible representation, and obtained an irreducible supercuspidal representation $\pi_{\Phi}=\ind_{K^d} ^{G(F)} \rho_{d}$ of $G(F)$, where $\ind_{K^d}^{G(F)} \rho_{d}$ denotes the compactly induced representation \cite[Theorem~15.1]{MR1824988}. The formal degree of $\pi_{\Phi}$ is calculated in \cite{2021arXiv210100658S}.
Recall that $A$ denotes the maximal split central torus of $G$, and $G^{\text{a}}$ denotes the reductive group $G/A$. We also define $G^{\text{a}, i}=G^{i}/A$ for $0\le i \le d$.
Let $(\mathsf{G^{\text{a}, i}})_{[y]}^\circ$ be the reductive quotient of the special fiber of the connected parahoric group scheme of $G^{\text{a}, i}$ associated to $y$, which is a reductive group over $k_F$.
\begin{proposition}[{\cite[Theorem~A]{2021arXiv210100658S}}]
\label{thmA}
Let $\Phi$ be a generic cuspidal $G$-datum. Then, the formal degree $d(\pi_{\Phi})$ of $\pi_{\Phi}$ is equal to
\[\frac{\dim(\rho_{-1})}{[G^{\Aa, 0}(F)_{[y]} : G^{\Aa, 0}(F)_{y, 0+}]} \exp_{q} \left(\frac{1}{2} \dim(G^{\Aa}) + \frac{1}{2} \dim((\mathsf{G^{\Aa, 0}})_{[y]}^\circ) + \frac{1}{2}\sum_{i=0}^{d-1} r_i \left(\abs{R(G^{i+1}, T) - R(G^{i}, T)}\right)\right).\]  
\end{proposition}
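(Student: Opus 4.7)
The plan is to apply the standard formula for the formal degree of a supercuspidal representation induced from a compact-mod-center open subgroup. Since $\pi_{\Phi} = \ind_{K^d}^{G(F)} \rho_d$ with $K^d$ compact modulo $A(F)$, we have
\[
d(\pi_{\Phi}) \;=\; \frac{\dim(\rho_d)}{\vol\bigl(K^d/A(F),\,\mu\bigr)},
\]
where $\mu$ is the Hiraga--Ichino--Ikeda Haar measure on $G(F)/A(F)$ attached to the level-zero additive character $\psi$. The proof thus reduces to computing both numerator and denominator explicitly in terms of the datum $\Phi$, and then extracting the claimed closed form.

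For the numerator, I would unwind Yu's inductive construction of $\rho_d$: at stage $i$ one extends the previous representation by a Heisenberg--Weil representation $\widetilde{\phi}_i$ attached to the $G^{i+1}$-generic character $\phi_i$ of depth $r_i$. Each $\widetilde{\phi}_i$ has dimension equal to the square root of the cardinality of an explicit symplectic $k_F$-vector space built from the Moy--Prasad quotient of $(G^{i+1}, G^{i})$ at $y$ at level $r_i/2$. Using the $G^{i+1}$-genericity of $\phi_i$ and the $T$-isotypic decomposition of this symplectic space, the cardinality reduces to a power of $q$ controlled by the depths $r_i$ and the root differences $R(G^{i+1}, T) - R(G^{i}, T)$. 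Multiplying these contributions over $i$ gives $\dim(\rho_d)$ as $\dim(\rho_{-1})$ times an explicit $\exp_q(\cdot)$ factor.

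For the denominator, I would write $K^d$ as $G^{0}(F)_{[y]}$ amalgamated with the pro-$p$ subgroup $K^d_+$ built from the Moy--Prasad filtrations used in Yu's construction, and then pass to the quotient by $A(F)$. Factoring $K^d_+$ along the same Moy--Prasad filtration expresses $\vol(K^d/A(F),\mu)$ as the product of the index $[G^{\Aa,0}(F)_{[y]} : G^{\Aa,0}(F)_{y,0+}]$, the volume $\vol\bigl(G^{\Aa,0}(F)_{y,0+},\mu\bigr)$, and the successive indices of the Moy--Prasad pieces of $K^d_+$ above depth $0+$. The HII normalization of $\mu$ attached to $\psi$ evaluates $\vol\bigl(G^{\Aa,0}(F)_{y,0+},\mu\bigr)$ in terms of $\dim(G^{\Aa})$, $\dim((\mathsf{G^{\Aa,0}})_{[y]}^\circ)$ and $\lvert(\mathsf{G^{\Aa,0}})_{[y]}^\circ(k_F)\rvert$, and the latter point count combines with the component-group part of $G^{\Aa,0}(F)_{[y]}/G^{\Aa,0}(F)_{y,0}$ to produce the clean index appearing in the stated formula. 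Assembling all of these contributions yields the displayed expression.

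The principal obstacle is the careful bookkeeping of the Moy--Prasad indices $[K^{i+1}_+ : K^{i}_+]$ and their identification with affine root subgroups having underlying roots in $R(G^{i+1}, T) - R(G^{i}, T)$ at the level $r_i/2$; this requires tame descent from the splitting field $E$ of $T$ down to $F$, together with a careful application of the $G^{i+1}$-generic condition on $\phi_i$ to guarantee that the relevant symplectic module decomposes cleanly into root spaces. A secondary difficulty is keeping the factors of $A(F)$ and $A$ coherent as one passes between $G$-objects and $G^{\Aa}$-objects in both numerator and denominator, so that the $\dim(G^{\Aa})$ and $\dim((\mathsf{G^{\Aa,0}})_{[y]}^\circ)$ terms ultimately appear with the correct sign in the exponent of $\exp_q$.
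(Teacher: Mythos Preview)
The paper does not prove this proposition at all: it is stated as a direct citation of \cite[Theorem~A]{2021arXiv210100658S} and is used as a black box, with no proof environment following the statement. So there is no ``paper's own proof'' to compare against.

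That said, your outline is a faithful sketch of the argument in the cited reference. The starting identity $d(\pi_\Phi)=\dim(\rho_d)/\vol(K^d/(K^d\cap A))$ is exactly what the present paper records later (Remark~\ref{twisted}, citing \cite[Lemma~18]{2021arXiv210100658S}), and the two ingredients you describe---computing $\dim(\rho_d)$ via the Heisenberg--Weil factors at each step of Yu's construction, and computing the volume by unwinding $K^d$ through the Moy--Prasad filtration with the Gross--Gan normalization of $\mu$---are the substance of Schwein's proof. Your identification of the main bookkeeping difficulties (tame descent for the root-space counts and tracking the passage from $G$ to $G^{\Aa}$) is also accurate. If you intend to actually supply a proof here rather than cite, you would need to fill in those index computations precisely; as written it is a correct high-level plan but not yet a proof.
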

\section{An review of non-singular supercuspidal representations}
\label{secnonsing}
In this section, we review the definition and the construction of non-singular supercuspidal representations \cite{2019arXiv191203274K}.
The construction of non-singular supercuspidal representations starts with a tame $k_{F}$-non-singular elliptic pair $(S, \theta)$, i.\,e., $S$ is a tame elliptic maximal torus of $G$, and $\theta$ is a character of $S(F)$ which satisfy the conditions in \cite[Definition~3.4.1]{2019arXiv191203274K}.
From the pair $(S, \theta)$, Kaletha constructed the tuple
\[\left((G^i)_{i=0}^{d}, y, (r_{i})_{i=0}^{d}, \kappa_{(S, \phi_{-1})}, (\phi_i)_{i=0}^{d}\right)\]
which satisfies the conditions {\bf{D1}}, {\bf{D2}}, {\bf{D3}}, and {\bf{D5}} in Section~\ref{secyu}, where $\kappa_{(S, \phi_{-1})}$ is a possibly reducible representation of $G^0(F)_{[y]}$ such that $\rho_{-1}\restriction_{G^0(F)_{y, 0}}$ is the inflation of a cuspidal representation of $G^0(F)_{y,0}/ G^0(F)_{y, 0+}$.
Hence, for each irreducible constituent $\rho_{-1}$ of $\kappa_{(S, \phi_{-1})}$, the tuple
\[\left((G^i)_{i=0}^{d}, y, (r_{i})_{i=0}^{d}, \rho_{-1}, (\phi_i)_{i=0}^{d}\right)\]
is a generic cuspidal $G$-datum.
Then, we obtain a supercuspidal representation of $G(F)$ from this datum by the construction of Yu.
An irreducible supercuspidal representation of $G(F)$ obtained in this way is called non-singular.

We explain the construction of the tuple
\[\left((G^i)_{i=0}^{d}, y, (r_{i})_{i=0}^{d}, \kappa_{(S, \phi_{-1})}, (\phi_i)_{i=0}^{d}\right)\]
from a tame $k_{F}$-non-singular elliptic pair $(S, \theta)$.

First, we consider a tame $k_{F}$-non-singular elliptic pair $(S, \theta)$ of depth-zero.
In this case, let $d=r_{0}=0$, $\phi_{0}$ be trivial, and $\phi_{-1}=\theta$. 

Let $[y]$ be the unique Frobenius-fixed point in $\mathcal{A}^{\text{red}}(S, F^u)$. According to \cite[Lemma~3.4.3]{MR4013740}, the point $[y]$ is a vertex of $\mathcal{B}^{\text{red}}(G, F)$.
We chose $y \in \mathcal{B}(G, F)$ whose projection on $\mathcal{B}^{\text{red}}(G, F)$ is $[y]$.

There exists a unique smooth integral model of $G$ whose group of $\mathcal{O}_{F^{u}}$- points equal to the stabilizer $G(F^{u})_{[y]}$ of $[y]$ in $G(F^{u})$.
Let $\mathsf{G}_{[y]}$ be the quotient of the special fiber of this model modulo its unipotent radical.
Then $\mathsf{G}_{[y]}$ is a smooth $k_F$-group scheme.
Let $\mathsf{G}_{[y]}^\circ$ be its neutral connected component, which is the reductive quotient of the special fiber of the parahoric group scheme of $G$ associated to the vertex $[y]$. We have $\mathsf{G}_{[y]}(k_F)= G(F)_{[y]}/G(F)_{y, 0+}$ and $\mathsf{G}_{[y]}^\circ(k_F)=G(F)_{y, 0}/G(F)_{y, 0+}$ \cite[Section~3]{2019arXiv191203274K}.
We also define the corresponding $k_F$-group schemes $\mathsf{S}$ and $\mathsf{S}^\circ$ satisfying $\mathsf{S}(k_F)=S(F)/S(F)_{0+}$ and $\mathsf{S}^\circ(k_F)=S(F)_{0}/S(F)_{0+}$.
Finally, we put $\mathsf{G}'_{y}=\mathsf{S} \cdot \mathsf{G}_{[y]}^\circ$. Then, Lang's theorem implies that $\mathsf{G}'_{[y]}(k_F)=S(F)G(F)_{y, 0}/G(F)_{y, 0+}$.
We regard $\theta$ as a character of $\mathsf{S}(k_F)$.

In \cite{2019arXiv191203274K}, Kaletha extended the theory of \cite{MR393266} for disconnected groups and defined the representation $\kappa_{(\mathsf{S}, \theta)}^{\mathsf{G}_{[y]}}$ of $\mathsf{G}_{[y]}(k_F)$ from the pair $(\mathsf{S}, \theta)$.
Then, we obtain the representation $\kappa_{(S, \theta)}$ of $G(F)_{[y]}$ by inflating $\kappa_{(\mathsf{S}, \theta)}^{\mathsf{G}_{[y]}}$ to $G(F)_{[y]}$.
We explain the construction of $\kappa_{(\mathsf{S}, \theta)}^{\mathsf{G}_{[y]}}$.

Let $\mathsf{U} \subset \mathsf{G}_{[y]}^\circ$ be the unipotent radical of a Borel subgroup of $\mathsf{G}_{[y]}^\circ$ containing $\mathsf{S}^\circ$, and let $\text{Fr}$ denote the Frobenius endomorphism of $\mathsf{G}_{[y]}$. We define the corresponding Deligne--Lusztig variety
\[Y_{\mathsf{U}}^{\mathsf{G}_{[y]}} = \{g\mathsf{U} \in \mathsf{G}_{[y]} / \mathsf{U} \mid g^{-1}\text{Fr}(g) \in \mathsf{U} \cdot \text{Fr}\left(\mathsf{U}\right)\}.\]
The group $\mathsf{G}_{[y]}(k_F)$ acts on $Y_{\mathsf{U}}^{\mathsf{G}_{[y]}}$ by the left multiplication and $\mathsf{S}(k_F)$ acts on $Y_{\mathsf{U}}^{\mathsf{G}_{[y]}}$ by the right multiplication, so these groups act on the $\ell$-adic cohomology group $H^{i}_{c} (Y_{\mathsf{U}}^{\mathsf{G}_{[y]}}, \overline{\mathbb{Q}_{\ell}})$.
We define
\[H^{i}_{c} (Y_{\mathsf{U}}^{\mathsf{G}_{[y]}}, \overline{\mathbb{Q}_{\ell}})_{\theta}= \{v\in H^{i}_{c} (Y_{\mathsf{U}}^{\mathsf{G}_{[y]}}, \overline{\mathbb{Q}_{\ell}}) \mid v\cdot s=\theta(s)v \ \forall s\in \mathsf{S}(k_F)\}.\]
Then, $\mathsf{G}_{[y]}(k_F)$ acts on $H^{i}_{c} (Y_{\mathsf{U}}^{\mathsf{G}_{[y]}}, \overline{\mathbb{Q}_{\ell}})_{\theta}$. 
According to \cite[Corollary~2.6.3]{2019arXiv191203274K}, this component vanishes for all but one $i$, namely $i= d(\mathsf{U})$, where $d(\mathsf{U})$ denotes the number of root hyperplanes separating the Weyl chambers of $\mathsf{U}$ and $\text{Fr}\left( \mathsf{U} \right)$ respectively.
We define the representation $\kappa_{(\mathsf{S}, \theta)}^{\mathsf{G}_{[y]}}$ of $\mathsf{G}_{[y]}(k_F)$ by the action of $\mathsf{G}_{[y]}(k_F)$ on $H^{d(\mathsf{U})}_{c} (Y_{\mathsf{U}}^{\mathsf{G}_{[y]}}, \overline{\mathbb{Q}_{\ell}})_{\theta}$.
\begin{remark}
\label{rmkg'}
Replacing $\mathsf{G}_{[y]}$ with $\mathsf{G}'_{[y]}$ in the construction above, we define the representation $\kappa_{(\mathsf{S}, \theta)}^{\mathsf{G}'_{[y]}}$ of $\mathsf{G}'_{[y]}(k_F)$.
Similarly, replacing $\mathsf{G}_{[y]}$ with $\mathsf{G}_{[y]}^\circ$, $\mathsf{S}$ with $\mathsf{S}^\circ$, and $\theta$ with $\theta^\circ := \theta\restriction_{\mathsf{S}^\circ}$, we define the representation $\kappa_{(\mathsf{S}^\circ, \theta^\circ)}^{\mathsf{G}_{[y]}^\circ}$ of $\mathsf{G}_{[y]}^\circ(k_F)$.
We note that $\kappa_{(\mathsf{S}^\circ, \theta^\circ)}^{\mathsf{G}_{[y]}^\circ}$ is the usual Deligne--Lusztig representation of $\mathsf{G}_{[y]}^\circ$ arising from the pair $(\mathsf{S}^\circ, \theta^\circ)$.
According to \cite[Corollary~2.6.4]{2019arXiv191203274K} and \cite[Remark~2.6.5]{2019arXiv191203274K}, $\kappa_{(\mathsf{S}, \theta)}^{\mathsf{G}_{[y]}}$ is isomorphic to the induced representation $\text{Ind}_{\mathsf{G}'_{[y]}(k_F)}^{\mathsf{G}_{[y]}(k_F)} \kappa_{(\mathsf{S}, \theta)}^{\mathsf{G}'_{[y]}}$ and the representation $\kappa_{(\mathsf{S}, \theta)}^{\mathsf{G}'_{[y]}}$ is obtained by endowing $\kappa_{(\mathsf{S}^\circ, \theta^\circ)}^{\mathsf{G}_{[y]}^\circ}$ with a structure of $\mathsf{G}'_{[y]}(k_F)$-representation.
These results are used later in Section~\ref{secfdns}.
\end{remark}
Next, we consider a tame $k_F$-non-singular elliptic pair $(S, \theta)$ of general depth.
From $\left(S, \theta \right)$, we obtain a sequence of twisted Levi subgroups
\[\overrightarrow{G}=\left(S=G^{-1}\subset G^0\subsetneq \ldots \subsetneq G^d=G\right)\]
in $G$ and a sequence of real numbers $\overrightarrow{r}=\left(0=r_{-1}, r_0, \ldots , r_d\right)$ as explained below (see \cite[3.6]{MR4013740}).
Let $E$ be the splitting field of $S$. For each positive real number $r$, we define
\[R_{r}=\{
\alpha\in R(G, S) \mid (\theta \circ N_{E/F} \circ \check{\alpha})(E_{r}^{\times})=1
\},\]
where $N_{E/F}$ denotes the norm map $S(E) \to S(F)$.
We also define $R_{r+}=\cap_{s>r} R_{s}$ for $r \ge 0$.
Let $r_{d-1} > r_{d-2} > \cdots > r_{0} >0$ be the breaks of this filtration i.\,e., the positive real numbers $r$ with $R_{r+} \neq R_{r}$.
We set in addition $r_{-1}=0$ and $r_{d}=\text{depth}(\theta)$.
For each $0\le i \le d$, we define $G^{i}$ be the connected reductive subgroup of $G$ with maximal torus $S$ and root system $R_{r_{i-1}+}$. We set $G^{-1}=S$. 
According to \cite[Lemma~3.6.1]{MR4013740}, the subgroups $G^i$ are twisted Levi subgroups of $G$.

We next recall the definition of a Howe factorization.
\begin{definition}[{\cite[Definition~3.6.2]{MR4013740}}]
A Howe factorization of $\left(S, \theta\right)$ is a sequences of characters $\overrightarrow{\phi}=\left(\phi_{-1}, \ldots, \phi_d\right)$, where $\phi_i$ is a character of $G^i(F)$ with the following properties.
\begin{enumerate}
\item \[\theta=\prod_{i=-1}^{d} \phi_i\restriction_{S(F)};\]
\item For all $0\le i \le d$, the character $\phi_{i}$ is trivial on the image of $G^{i}_{\text{sc}}(F)$, where $G^{i}_{\text{sc}}(F)$ denotes the simply connected cover of the derived subgroup of $G^{i}$;
\item For all $0 \le i <d$, $\phi_i$ is $G^{i+1}$-generic of depth $r_i$ relative to $y$ in the sense of \cite[Section.~9]{MR1824988}. For $i=d$, $\phi_{d}$ is trivial if $r_{d-1}=r_d$ and has depth $r_{d}$ otherwise. For $i=-1$, $\phi_{-1}$ is trivial if $G^0=S$ and otherwise satisfies $\phi_{-1}\restriction_{S(F)_{0+}} = 1$. 
\end{enumerate}
\end{definition}
According to \cite[Proposition~3.6.7]{MR4013740}, $\left(S, \theta\right)$ has a Howe factorization.
We take a Howe factorization $\overrightarrow{\phi}$.
Then, $(S, \phi_{-1})$ is a tame $k_F$-non-singular elliptic pair of $G^0$, and the character $\phi_{-1}$ is of depth-zero.
Applying the construction above for the pair $(S, \phi_{-1})$, we have the representation $\kappa_{(S, \phi_{-1})}$ of $G^0(F)_{[y]}$.
In this way, we obtain the tuple
\[\left((G^i)_{i=0}^{d}, y, (r_{i})_{i=0}^{d}, \kappa_{(S, \phi_{-1})}, (\phi_i)_{i=0}^{d}\right)\]
from a tame $k_F$-non-singular elliptic pair $(S, \theta)$ .

If the pair $(S, \theta)$ is regular in the sense of \cite[Definition~3.7.5]{MR4013740}, the representation $\kappa_{(S, \phi_{-1})}$ is irreducible, and the tuple
\[\left((G^i)_{i=0}^{d}, y, (r_{i})_{i=0}^{d}, \kappa_{(S, \phi_{-1})}, (\phi_i)_{i=0}^{d}\right)\]
is a generic cuspidal $G$-datum \cite[Proposition~3.7.8]{MR4013740}. Applying the construction of Yu to this datum, we obtain a regular supercuspidal representation $\pi_{(S, \theta)}$ and the formal degree of $\pi_{(S, \theta)}$ is computed by combining Proposition~\ref{thmA} and the dimension formula for the Deligne--Lusztig representations, which is done in \cite[Corollary~52]{2021arXiv210100658S}.

In our case, however, the representation $\kappa_{(S, \phi_{-1})}$ can be reducible. 
Hence, in order to obtain a non-singular supercuspidal representation, we have to take an irreducible constituent of $\kappa_{(S, \phi_{-1})}$.
We explain the description of the set $[\kappa_{(S, \phi_{-1})}]$ of irreducible constituents of $\kappa_{(S, \phi_{-1})}$ \cite[Section~3]{2019arXiv191203274K}.
We note that the set $[\kappa_{(S, \phi_{-1})}]$ dose not denote a multiset.

First, we assume that $\theta$ is of depth-zero. Let $N(S, G)(F)_{\theta}$ be the stabilizer of the pair $(S, \theta)$ in $G(F)$ by the conjugate action, and $N(\mathsf{S}, \mathsf{G}_{[y]})(k_F)_{\theta}$ be the stabilizer of the pair $(\mathsf{S}, \theta)$ in $\mathsf{G}_{[y]}(k_F)$ by the conjugate action. 
An element $G(F)$ that normalizes $S$ also normalizes $\mathcal{A}^{\text{red}}(S, F^{u})$ and acts on $\mathcal{A}^{\text{red}}(S, F^{u})$ Frobenius-equivalently, hence fixes the point $[y]$.
Therefore, we get the inclusion map $N(S, G)(F)_{\theta} \to G(F)_{[y]}$.
Hence, we obtain a natural map $p \colon N(S, G)(F)_{\theta} \to N(\mathsf{S}, \mathsf{G}_{[y]})(k_F)_{\theta}$.

Let $\text{Irr}(N(S, G)(F)_{\theta}, \theta)$ be the set of irreducible representations of $N(S, G)(F)_{\theta}$ whose restriction to $S(F)$ is $\theta$-isotypic, and let $\text{Irr}(N(\mathsf{S}, \mathsf{G}_{[y]})(k_F)_{\theta}, \theta)$ be the set of irreducible representations of $N(\mathsf{S}, \mathsf{G}_{[y]})(k_F)_{\theta}$ whose restriction to $\mathsf{S}(k_F)$ is $\theta$-isotypic.
\begin{lemma}
\label{irrbij}
The natural map $p \colon N(S, G)(F)_{\theta} \to N(\mathsf{S}, \mathsf{G}_{[y]})(k_F)_{\theta}$ induces a bijection
\[\Irr(N(S, G)(F)_{\theta}, \theta) \longleftrightarrow \Irr(N(\mathsf{S}, \mathsf{G}_{[y]})(k_F)_{\theta}, \theta).\] 
\end{lemma}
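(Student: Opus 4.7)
The plan is to reduce the lemma to showing that $p$ has kernel $S(F)_{0+}$ and is surjective, so that $p$ descends to a group isomorphism
\[
\bar p \colon N(S,G)(F)_\theta / S(F)_{0+} \xrightarrow{\sim} N(\mathsf{S}, \mathsf{G}_{[y]})(k_F)_\theta.
\]
Since $\theta$ is of depth zero it is trivial on $S(F)_{0+}$, so every representation of $N(S,G)(F)_\theta$ whose restriction to $S(F)$ is $\theta$-isotypic factors through $\bar p$. Under this identification the $\theta$-isotypic condition on $S(F)$ matches the $\theta$-isotypic condition on $\mathsf{S}(k_F) = S(F)/S(F)_{0+}$, and inflation along $\bar p$ gives the required bijection of irreducible representations.

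For the kernel, $p$ is the restriction of the reduction map $G(F)_{[y]} \to \mathsf{G}_{[y]}(k_F)$, so $\ker p = N(S,G)(F)_\theta \cap G(F)_{y,0+}$, and it suffices to show this intersection is contained in $S(F)_{0+}$. I would pass to a tame extension $E/F$ splitting $S$, where $G(E)_{y,0+}$ admits an Iwahori-type factorization with respect to $S_E$ as a product of pro-unipotent parts of two opposite root subgroups and $S(E)_{0+}$. An element that normalizes $S_E$ must then have trivial unipotent factors, by the analogue inside the pro-unipotent radical of the classical fact that the big Bruhat cell meets the normalizer of a maximal torus only in the torus itself. Galois descent then yields the statement over $F$.

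For surjectivity, given $\bar g \in N(\mathsf{S}, \mathsf{G}_{[y]})(k_F)_\theta$, I would lift it arbitrarily to $\tilde g \in G(F)_{[y]}$ using surjectivity of the reduction map. Then $\tilde g S \tilde g^{-1}$ is a maximal $F$-torus of $G$ contained in the parahoric at $y$, and its reduction in $\mathsf{G}_{[y]}$ equals $\bar g \mathsf{S} \bar g^{-1} = \mathsf{S}$. Applying the standard Bruhat--Tits lifting principle---two maximal tori of $G$ lying in the parahoric at $y$ with the same reduction in $\mathsf{G}_{[y]}^\circ$ are conjugate by an element of the pro-unipotent radical $G(F)_{y,0+}$---I obtain $u \in G(F)_{y,0+}$ with $u\tilde g S \tilde g^{-1} u^{-1} = S$. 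Setting $g := u\tilde g$, the element $g$ lies in $N(S,G)(F) \cap G(F)_{[y]}$, projects to $\bar g$, and stabilizes $\theta$: its conjugation action on $S(F)$ preserves $S(F)_{0+}$ and agrees modulo $S(F)_{0+}$ with that of $\bar g$, under which $\theta$ is stable.

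The main obstacle is the torus-lifting input in the surjectivity step: one must verify that two maximal $F$-tori of $G$ whose integral closures in the parahoric at $y$ have the same reduction are conjugate by an element of $G(F)_{y,0+}$. Over $F^u$ this is a smoothness-plus-Hensel argument on the scheme of maximal tori in the parahoric group scheme, but descending back to $F$ uses the tameness of $S$ and the hypothesis that $y$ lies in the apartment of $S$ in a non-trivial way.
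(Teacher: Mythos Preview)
Your overall strategy is correct and matches the paper's: both arguments reduce to showing that $p$ is surjective with kernel $S(F)_{0+}$, after which the bijection of $\theta$-isotypic irreducibles follows by inflation since $\theta$ is trivial on $S(F)_{0+}$.

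The difference is in how this reduction is established. The paper does not argue directly with Iwahori factorizations or torus-lifting. Instead it invokes \cite[Lemma~3.2.2]{2019arXiv191203274K}, which already asserts that $p$ induces an isomorphism
\[
N(S,G)(F)_\theta / S(F) \;\xrightarrow{\ \sim\ }\; N(\mathsf{S},\mathsf{G}_{[y]})(k_F)_\theta / \mathsf{S}(k_F).
\]
Since $p\restriction_{S(F)}$ is visibly the surjection $S(F)\to\mathsf{S}(k_F)$, this one citation immediately yields both surjectivity of $p$ and $\ker p \subset S(F)$, hence $\ker p = S(F)_{0+}$. The remainder of the paper's proof is then two lines.

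Your route, by contrast, is essentially a direct re-proof of that cited lemma via Bruhat--Tits theory: the Iwahori-factorization argument for the kernel and the DeBacker-style conjugacy of unramified tori with common reduction for surjectivity. This is sound in outline and has the virtue of being self-contained, but the torus-lifting step you flag as the obstacle is precisely where the real content lies---it is, in effect, what Kaletha's lemma packages. So your proof is longer and carries a genuine technical burden that the paper simply outsources; conversely, the paper's argument is shorter only because it treats that lemma as a black box.
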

\begin{proof}
The image  of $S(F)$ via the natural map $p \colon N(S, G)(F)_{\theta} \to N(\mathsf{S}, \mathsf{G}_{[y]})(k_F)_{\theta}$ is equal to $\mathsf{S}(k_F)$, and \cite[Lemma~3.2.2]{2019arXiv191203274K} implies that the induced map
\[N(S, G)(F)_{\theta}/ S(F) \longrightarrow N(\mathsf{S}, \mathsf{G}_{[y]})(k_F)_{\theta}/\mathsf{S}(k_F)\]
is an isomorphism.
Since the image of $p$ contains $\mathsf{S}(k_F)$, this isomorphism implies that $p$ is a surjection.
Let $\ker{p}$ denote the kernel of $p$.
Then, $p$ indeces an isomorphism 
\[N(S, G)(F)_{\theta}/\ker{p} \simeq N(\mathsf{S}, \mathsf{G}_{[y]})(k_F)_{\theta},\]
and we can identify the set $\Irr(N(\mathsf{S}, \mathsf{G}_{[y]})(k_F)_{\theta}, \theta)$ with the subset of $\Irr(N(S, G)(F)_{\theta}, \theta)$ consisting of the the representations whose restriction to $\ker{p}$ is trivial.

On the other hand, since $p$ induces an isomorphism 
\[N(S, G)(F)_{\theta}/ S(F) \longrightarrow N(\mathsf{S}, \mathsf{G}_{[y]})(k_F)_{\theta}/\mathsf{S}(k_F),\]
$\ker{p}$ is contained in $S(F)$.
Hence, $\ker{p}$ is equal to the kernel of the restriction $p\restriction_{S(F)} \colon S(F) \to \mathsf{S}(k_F) = S(F)/S(F)_{0+}$, which is equal to $S(F)_{0+}$.
Since $\theta$ is trivial on $S(F)_{0+}$, we conclude that every element in $\Irr(N(S, G)(F)_{\theta}, \theta)$ is trivial on $\ker{p}$. Therefore, we obtain a bijection
\[\Irr(N(S, G)(F)_{\theta}, \theta) \longleftrightarrow \Irr(N(\mathsf{S}, \mathsf{G}_{[y]})(k_F)_{\theta}, \theta).\] 
\end{proof}
In \cite[2.7]{2019arXiv191203274K}, Kaletha defined an action $R_{\mathsf{U}}^{\mathsf{G}_{[y]}, \epsilon}$ of $N(\mathsf{S}, \mathsf{G}_{[y]})(k_F)_{\theta}$ on $H^{d(\mathsf{U})}_{c} (Y_{\mathsf{U}}^{\mathsf{G}_{[y]}}, \overline{\mathbb{Q}_{\ell}})_{\theta}$ whose restriction to $\mathsf{S}(k_F)$ is $\theta^{-1}$-isotypic.
%The action $R_{\mathsf{U}}^{\mathsf{G}_{[y]}, \epsilon}$ depends on the choice of coherent splittings $\epsilon$ explained in \cite[2.4]{2019arXiv191203274K}.
The action $R_{\mathsf{U}}^{\mathsf{G}_{[y]}, \epsilon}$ commutes with the action of $\mathsf{G}_{[y]}(k_F)$ by $\kappa_{(\mathsf{S}, \theta)}^{\mathsf{G}_{[y]}}$. Therefore, we obtain a representation of $\mathsf{G}_{[y]}(k_F) \times N(\mathsf{S}, \mathsf{G}_{[y]})(k_F)_{\theta}$ on $H^{d(\mathsf{U})}_{c} (Y_{\mathsf{U}}^{\mathsf{G}_{[y]}}, \overline{\mathbb{Q}_{\ell}})_{\theta}$. Let $\kappa_{(\mathsf{S}, \theta)}^{\mathsf{G}_{[y]}, \epsilon}$ denote this $\mathsf{G}_{[y]}(k_F) \times N(\mathsf{S}, \mathsf{G}_{[y]})(k_F)_{\theta}$-representation.
For $\rho \in \text{Irr}(N(\mathsf{S}, \mathsf{G}_{[y]})(k_F)_{\theta}, \theta)$, we define 
\[\kappa_{(\mathsf{S}, \theta, \rho)}^{\mathsf{G}_{[y]}, \epsilon}= \text{Hom}_{N(\mathsf{S}, \mathsf{G}_{[y]})(k_F)_{\theta}} (\check{\rho}, \kappa_{(\mathsf{S}, \theta)}^{\mathsf{G}_{[y]}, \epsilon}),\]
where $\check{\rho}$ denotes the contragradient representation of $\rho$.
It is a representation of $\mathsf{G}_{[y]}(k_F)$.
\begin{proposition}[{\cite[Theorem~2.7.7]{2019arXiv191203274K}}]
The map $\rho \mapsto \kappa_{(\mathsf{S}, \theta, \rho)}^{\mathsf{G}_{[y]}, \epsilon}$ is a bijection
\[\Irr(N(\mathsf{S}, \mathsf{G}_{[y]})(k_F)_{\theta}, \theta) \longrightarrow \left[\kappa_{(\mathsf{S}, \theta)}^{\mathsf{G}_{[y]}}\right], \]
where $\left[\kappa_{(\mathsf{S}, \theta)}^{\mathsf{G}_{[y]}}\right]$ denotes the set of irreducible constituents in $\kappa_{(\mathsf{S}, \theta)}^{\mathsf{G}_{[y]}}$. 
Moreover, the multiplicity of $\kappa_{(\mathsf{S}, \theta, \rho)}^{\mathsf{G}_{[y]}, \epsilon}$ in $\kappa_{(\mathsf{S}, \theta)}^{\mathsf{G}_{[y]}}$ is equal to the dimension of $\rho$.
\end{proposition}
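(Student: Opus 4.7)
The plan is to reduce the statement to the standard double-centralizer decomposition for the bimodule $V := H^{d(\mathsf{U})}_{c}\bigl(Y_\mathsf{U}^{\mathsf{G}_{[y]}}, \overline{\mathbb{Q}_\ell}\bigr)_\theta$ over $\mathsf{G}_{[y]}(k_F) \times N(\mathsf{S}, \mathsf{G}_{[y]})(k_F)_{\theta}$ afforded by $\kappa_{(\mathsf{S}, \theta)}^{\mathsf{G}_{[y]}, \epsilon}$. The key ingredient is a computation of the endomorphism algebra $\End_{\mathsf{G}_{[y]}(k_F)}(V)$ via an intertwining formula for Deligne--Lusztig induction extended from $\mathsf{G}_{[y]}^\circ$ to the disconnected group $\mathsf{G}_{[y]}$.

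The first step is to compute $\End_{\mathsf{G}_{[y]}(k_F)}(V)$. For the connected component, the classical Deligne--Lusztig intertwining formula identifies $\End_{\mathsf{G}_{[y]}^\circ(k_F)}\bigl(\kappa_{(\mathsf{S}^\circ, \theta^\circ)}^{\mathsf{G}_{[y]}^\circ}\bigr)$ with the group algebra of the Weyl stabilizer $W(\mathsf{S}^\circ, \mathsf{G}_{[y]}^\circ)(k_F)_{\theta}$. Combining this with the description recalled in Remark~\ref{rmkg'}, namely $\kappa_{(\mathsf{S}, \theta)}^{\mathsf{G}_{[y]}} \simeq \text{Ind}_{\mathsf{G}'_{[y]}(k_F)}^{\mathsf{G}_{[y]}(k_F)} \kappa_{(\mathsf{S}, \theta)}^{\mathsf{G}'_{[y]}}$, together with Frobenius reciprocity and Mackey decomposition, I would upgrade the computation to an isomorphism $\End_{\mathsf{G}_{[y]}(k_F)}(V) \simeq \overline{\mathbb{Q}_\ell}\bigl[N(\mathsf{S}, \mathsf{G}_{[y]})(k_F)_{\theta}/\mathsf{S}(k_F)\bigr]$, where the map is induced by the $R_\mathsf{U}^{\mathsf{G}_{[y]}, \epsilon}$-action, and descends through the quotient by $\mathsf{S}(k_F)$ because that action is $\theta^{-1}$-isotypic on $\mathsf{S}(k_F)$.

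Once $\End_{\mathsf{G}_{[y]}(k_F)}(V)$ is identified as a semisimple algebra whose simple modules are, via the contragredient, in bijection with $\Irr(N(\mathsf{S}, \mathsf{G}_{[y]})(k_F)_{\theta}, \theta)$, the double-centralizer theorem yields a canonical decomposition
\[V \simeq \bigoplus_{\rho \in \Irr(N(\mathsf{S}, \mathsf{G}_{[y]})(k_F)_\theta, \theta)} \check{\rho} \otimes M_\rho\]
as a bimodule, where the $M_\rho$ are pairwise non-isomorphic irreducible $\mathsf{G}_{[y]}(k_F)$-representations. Then $\kappa_{(\mathsf{S}, \theta, \rho)}^{\mathsf{G}_{[y]}, \epsilon} = \Hom_{N(\mathsf{S}, \mathsf{G}_{[y]})(k_F)_{\theta}}(\check{\rho}, V)$ coincides with $M_\rho$, which gives both the asserted bijection and the multiplicity formula $\dim \rho$ at one stroke.

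The main obstacle is the first step: upgrading the Deligne--Lusztig intertwining formula to the disconnected group $\mathsf{G}_{[y]}$ with full control over the normalization. In particular, the sign character $\epsilon$ in the definition of $R_\mathsf{U}^{\mathsf{G}_{[y]}, \epsilon}$ must be engineered so that the resulting map $\overline{\mathbb{Q}_\ell}[N(\mathsf{S}, \mathsf{G}_{[y]})(k_F)_{\theta}] \to \End_{\mathsf{G}_{[y]}(k_F)}(V)$ is an honest algebra homomorphism rather than a projective/twisted one; without this correction the simple modules on the endomorphism side would parametrize projective representations of a central extension of $N(\mathsf{S}, \mathsf{G}_{[y]})(k_F)_{\theta}/\mathsf{S}(k_F)$, rather than the set $\Irr(N(\mathsf{S}, \mathsf{G}_{[y]})(k_F)_{\theta}, \theta)$ we actually want.
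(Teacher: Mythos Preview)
The paper does not itself prove this proposition; it is quoted as \cite[Theorem~2.7.7]{2019arXiv191203274K}. However, Section~\ref{secfdns} recounts the main ingredients of Kaletha's argument, and your proposal matches them closely. Kaletha extends $\kappa_{(\mathsf{S},\theta)}^{\mathsf{G}'_{[y]}}$ to a representation $\sigma$ of the semidirect product $\mathsf{G}'_{[y]}(k_F)\rtimes N(\mathsf{S},\mathsf{G}_{[y]})(k_F)_\theta$ via the action $C_\mathsf{U}^{\mathsf{G}'_{[y]},\epsilon}$, proves that inducing $\sigma$ to $\mathsf{G}_{[y]}(k_F)\rtimes N(\mathsf{S},\mathsf{G}_{[y]})(k_F)_\theta$ recovers the analogous extension of $\kappa_{(\mathsf{S},\theta)}^{\mathsf{G}_{[y]}}$, and then invokes the abstract \cite[Lemma~B.1, Proposition~B.3]{2019arXiv191203274K}. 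Those abstract lemmas are a semidirect-product repackaging of the double-centralizer decomposition you describe, and the hypotheses they require---the description $\End_H(\sigma)=\bigoplus_{n\in N_H/S}\mathbb{C}\cdot\sigma(n^{-1}\rtimes n)$ and the Mackey-type disjointness of conjugates, recorded as conditions~(1) and~(2) in Section~\ref{seclem}---are exactly the endomorphism computation and the disjointness step in your outline. So the strategies coincide; Kaletha's semidirect-product formulation is simply a convenient packaging that feeds directly into the later dimension comparison in Section~\ref{secfdns}.

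One small imprecision worth flagging: the map induced by $R_\mathsf{U}^{\mathsf{G}_{[y]},\epsilon}$ does not literally descend to the untwisted group algebra $\overline{\mathbb{Q}_\ell}\bigl[N(\mathsf{S},\mathsf{G}_{[y]})(k_F)_\theta/\mathsf{S}(k_F)\bigr]$, since $\mathsf{S}(k_F)$ is sent to the scalars $\theta^{-1}$, not to~$1$. The correct target is the $\theta^{-1}$-twisted group algebra (the quotient of $\overline{\mathbb{Q}_\ell}[N(\mathsf{S},\mathsf{G}_{[y]})(k_F)_\theta]$ by the relations $s-\theta^{-1}(s)$ for $s\in\mathsf{S}(k_F)$), whose simple modules are precisely $\Irr(N(\mathsf{S},\mathsf{G}_{[y]})(k_F)_\theta,\theta^{-1})$; the contragredient then lands you in $\Irr(N(\mathsf{S},\mathsf{G}_{[y]})(k_F)_\theta,\theta)$ as desired. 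This does not affect the validity of your plan, and your final paragraph already anticipates exactly the point where the $\epsilon$-normalization is needed to make the map a genuine algebra homomorphism rather than a projective one.
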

For $\rho \in \text{Irr}(S, G)(F)_{\theta}, $ let $\kappa_{(S, \theta, \rho)}^{\epsilon}$ be the representation of $G(F)_{[y]}$ obtained by the inflation of $\kappa_{(\mathsf{S}, \theta, \rho)}^{\mathsf{G}_{[y]}, \epsilon}$. Here, we regard $\rho$ as an element of $\text{Irr}(N(\mathsf{S}, \mathsf{G}_{[y]})(k_F)_{\theta}, \theta)$ by the bijection
\[\text{Irr}(N(S, G)(F)_{\theta}, \theta) \longleftrightarrow \text{Irr}(N(\mathsf{S}, \mathsf{G}_{[y]})(k_F)_{\theta}, \theta).\] 
of Lemma~\ref{irrbij}.
\begin{corollary}
\label{depthzero}
The map $\rho \mapsto \kappa_{(S, \theta, \rho)}^{\epsilon}$ is a bijection
\[\Irr(N(S, G)(F)_{\theta}, \theta) \longrightarrow \left[\kappa_{(S, \theta)}\right].\]
Moreover, the multiplicity of $\kappa_{(S, \theta, \rho)}^{\epsilon}$ in $\kappa_{(S, \theta)}$ is equal to the dimension of $\rho$.
\end{corollary}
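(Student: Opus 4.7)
The plan is to derive this corollary by chaining together three bijections: the inflation correspondence between representations of $\mathsf{G}_{[y]}(k_F)$ and representations of $G(F)_{[y]}$ trivial on $G(F)_{y, 0+}$, the bijection supplied by the preceding proposition (Kaletha's Theorem~2.7.7), and the bijection of Lemma~\ref{irrbij}.

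First I would record that $\kappa_{(S, \theta)}$ is by construction the inflation of $\kappa_{(\mathsf{S}, \theta)}^{\mathsf{G}_{[y]}}$ along the surjection $G(F)_{[y]} \twoheadrightarrow G(F)_{[y]}/G(F)_{y, 0+} = \mathsf{G}_{[y]}(k_F)$. Inflation along this surjection is an equivalence between the category of $\mathsf{G}_{[y]}(k_F)$-representations and the category of $G(F)_{[y]}$-representations on which $G(F)_{y, 0+}$ acts trivially, so it preserves irreducibility and multiplicities in a decomposition. Hence inflation induces a bijection
\[\left[\kappa_{(\mathsf{S}, \theta)}^{\mathsf{G}_{[y]}}\right] \longleftrightarrow \left[\kappa_{(S, \theta)}\right], \qquad \sigma \mapsto \operatorname{Inf}(\sigma),\]
with the multiplicity of $\operatorname{Inf}(\sigma)$ in $\kappa_{(S, \theta)}$ equal to the multiplicity of $\sigma$ in $\kappa_{(\mathsf{S}, \theta)}^{\mathsf{G}_{[y]}}$.

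Next I would invoke the preceding proposition to obtain the bijection
\[\Irr\bigl(N(\mathsf{S}, \mathsf{G}_{[y]})(k_F)_{\theta}, \theta\bigr) \xrightarrow{\ \sim\ } \left[\kappa_{(\mathsf{S}, \theta)}^{\mathsf{G}_{[y]}}\right], \qquad \rho \mapsto \kappa_{(\mathsf{S}, \theta, \rho)}^{\mathsf{G}_{[y]}, \epsilon},\]
which sends $\rho$ to a constituent appearing with multiplicity $\dim(\rho)$. Composing with Lemma~\ref{irrbij}, which identifies $\Irr(N(S, G)(F)_{\theta}, \theta)$ with $\Irr(N(\mathsf{S}, \mathsf{G}_{[y]})(k_F)_{\theta}, \theta)$ via the surjection $p$ (and in particular preserves dimensions, since its kernel $S(F)_{0+}$ acts trivially throughout), and then with the inflation bijection above, I obtain a bijection $\Irr(N(S, G)(F)_{\theta}, \theta) \to [\kappa_{(S, \theta)}]$.

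The only thing left to check is that this composite is precisely the map $\rho \mapsto \kappa_{(S, \theta, \rho)}^{\epsilon}$ and that the multiplicity count carries through; both hold immediately, the first by the very definition of $\kappa_{(S, \theta, \rho)}^{\epsilon}$ given just before the corollary statement as the inflation of $\kappa_{(\mathsf{S}, \theta, \rho)}^{\mathsf{G}_{[y]}, \epsilon}$ with $\rho$ transported through Lemma~\ref{irrbij}, and the second because each of the three steps preserves multiplicities (the first and third are equivalences of categories, and the middle one carries the $\dim(\rho)$ statement from Kaletha's theorem). There is no substantial obstacle here: the corollary is a formal consequence of assembling the already-established proposition and lemma.
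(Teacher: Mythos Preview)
Your proposal is correct and matches the paper's approach: the paper states Corollary~\ref{depthzero} without proof, treating it as an immediate consequence of the preceding proposition, the definition of $\kappa_{(S,\theta,\rho)}^{\epsilon}$ as an inflation, and Lemma~\ref{irrbij}, which is exactly the chain of bijections you spell out.
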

We now consider the general case.
Let $(S, \theta)$ be a tame $k_F$-non-singular elliptic pair of general depth.
From the pair $(S, \theta)$, we obtain a sequences of twisted Levi subgroups $\overrightarrow{G}$ and a sequence of real numbers $\overrightarrow{r}$, and we take a Howe decomposition $\overrightarrow{\phi}$ of $(S, \theta)$ as above.
As in the case of depth-zero, we define $N(S, G)(F)_{\theta}$ be the stabilizer of the pair $(S, \theta)$ in $G(F)$ by the conjugate action.
We also define the groups $N(S, G^0)(F)_{\phi_{-1}}$ and $N(S, G^0)(F)_{\theta}$ similarly.
According to \cite[Lemma~3.4.5]{2019arXiv191203274K}, the natural inclusion $G^0(F) \to G(F)$ gives the identification
\[N(S, G^0)(F)_{\phi_{-1}} = N(S, G^0)(F)_{\theta} = N(S, G)(F)_{\theta}.\]
Let $\Irr(N(S, G)(F)_{\theta}, \theta)$ be the set of irreducible representations of $N(S, G)(F)_{\theta}$ whose restriction to $S(F)$ is $\theta$-isotypic, and let $\Irr(N(S, G^0)(F)_{\phi_{-1}}, \phi_{-1})$ be the set of irreducible representations of $N(S, G^0)(F)_{\phi_{-1}} = N(S, G)(F)_{\theta}$ whose restriction to $S(F)$ is $\phi_{-1}$-isotypic.
Put $\delta_{0}= \prod_{i=0}^{d} \phi_{i}^{-1}\restriction_{G^0(F)}$. Then, $\rho \mapsto \rho_{-1}:=\delta_{0} \otimes \rho$  is a bijection
\[\Irr(N(S, G)(F)_{\theta}, \theta) \longleftrightarrow \Irr(N(S, G^0)(F)_{\phi_{-1}}, \phi_{-1}).\]
Therefore, Corollary~\ref{depthzero} implies the following result.
\begin{corollary}
The map $\rho \mapsto \kappa_{(S, \phi_{-1}, \rho_{-1})}^{\epsilon}$ is a bijection
\[\Irr(N(S, G)(F)_{\theta}, \theta) \longrightarrow \left[\kappa_{(S, \phi_{-1})}\right].\]
Moreover, the multiplicity of $\kappa_{(S, \phi_{-1}, \rho_{-1})}^{\epsilon}$ in $\kappa_{(S, \phi_{-1})}$ is equal to the dimension of $\rho$.
\end{corollary}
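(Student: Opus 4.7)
The plan is to derive this corollary directly from the depth-zero case (Corollary~\ref{depthzero}) by transferring it along the bijection $\rho \mapsto \rho_{-1} = \delta_0 \otimes \rho$ that was just exhibited, using the identification $N(S, G)(F)_{\theta} = N(S, G^0)(F)_{\phi_{-1}}$ from \cite[Lemma~3.4.5]{2019arXiv191203274K}.

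First I would observe that $(S, \phi_{-1})$ is, by construction of the Howe factorization, a tame $k_F$-non-singular elliptic pair of $G^0$ with $\phi_{-1}$ of depth-zero. Hence the depth-zero case (Corollary~\ref{depthzero}), applied to $G^0$ in place of $G$ and $\phi_{-1}$ in place of $\theta$, yields a bijection
\[
\Irr(N(S, G^0)(F)_{\phi_{-1}}, \phi_{-1}) \longrightarrow \left[\kappa_{(S, \phi_{-1})}\right], \qquad \rho_{-1} \mapsto \kappa_{(S, \phi_{-1}, \rho_{-1})}^{\epsilon},
\]
with multiplicity in $\kappa_{(S, \phi_{-1})}$ equal to $\dim(\rho_{-1})$.

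Next I would compose this with the bijection
\[
\Irr(N(S, G)(F)_{\theta}, \theta) \longleftrightarrow \Irr(N(S, G^0)(F)_{\phi_{-1}}, \phi_{-1}), \qquad \rho \longmapsto \rho_{-1} := \delta_0 \otimes \rho,
\]
described just before the statement of the corollary. The fact that this is a well-defined bijection uses the identification $N(S, G)(F)_{\theta} = N(S, G^0)(F)_{\phi_{-1}}$ of \cite[Lemma~3.4.5]{2019arXiv191203274K} and the relation $\theta = \prod_{i=-1}^{d} \phi_{i}\restriction_{S(F)}$ from the definition of a Howe factorization, which together imply that twisting a $\theta$-isotypic representation by the character $\delta_0 = \prod_{i=0}^{d} \phi_{i}^{-1}\restriction_{G^0(F)}$ produces a $\phi_{-1}$-isotypic representation, and vice versa. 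Composing the two bijections yields the desired bijection $\rho \mapsto \kappa_{(S, \phi_{-1}, \rho_{-1})}^{\epsilon}$.

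For the multiplicity statement, I note that $\delta_0$ is a character, hence one-dimensional, so $\dim(\rho_{-1}) = \dim(\delta_0 \otimes \rho) = \dim(\rho)$. The depth-zero corollary therefore says that the multiplicity of $\kappa_{(S, \phi_{-1}, \rho_{-1})}^{\epsilon}$ in $\kappa_{(S, \phi_{-1})}$ equals $\dim(\rho_{-1}) = \dim(\rho)$, as required. There is no real obstacle here beyond tracking the two bijections and checking the character-twist is compatible with the isotypic condition; everything substantial has already been done in Corollary~\ref{depthzero} and in \cite[Lemma~3.4.5]{2019arXiv191203274K}.
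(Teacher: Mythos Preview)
Your proposal is correct and matches the paper's approach exactly: the paper states, immediately before the corollary, that the bijection $\rho \mapsto \rho_{-1} = \delta_0 \otimes \rho$ combined with Corollary~\ref{depthzero} (applied to the depth-zero pair $(S,\phi_{-1})$ in $G^0$) yields the result. Your write-up simply fills in the details of this one-line deduction, including the observation that $\dim(\rho_{-1})=\dim(\rho)$ because $\delta_0$ is a character.
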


Let $\rho\in \Irr(N(S, G)(F)_{\theta}, \theta)$. 
We define the non-singular representation $\pi_{(S, \theta, \rho)}^{\epsilon}$ of $G(F)$ be the representation obtained from the generic cuspidal $G$-datum
\[\left((G^i)_{i=0}^{d}, y, (r_{i})_{i=0}^{d}, \kappa_{(S, \phi_{-1}, \rho_{-1})}^{\epsilon}, (\phi_i)_{i=0}^{d}\right)\]
by the twisted Yu's construction defined in \cite{FKS2021}.
\begin{remark}
\label{twisted}
We need not to concern ourselves with the precise definition of the twisted Yu's construction, but only need to know that the supercuspidal representation obtained by twisted Yu's construction from a generic cuspidal $G$-datum
\[\Psi=(\overrightarrow{G}, y, \overrightarrow{r}, \rho_{-1}, \overrightarrow{\phi})\]
is the representation $\ind_{K^d}^{G(F)} (\rho_{d} \otimes e)$, where $e$ is a sign character of $K^d$ and $\ind_{K^d}^{G(F)} (\rho_{d} \otimes e)$ denotes the compactly induced representation.
According to \cite[Lemma~18]{2021arXiv210100658S}, the formal degree of the representation $\pi_{\Psi}$ is equal to
\[\frac{\dim({\rho_{d}})}{\vol(K^{d}/(K^{d}\cap A))}\]
and the formal degree of the representation $\ind_{K^d}^{G(F)} (\rho_{d} \otimes e)$ is equal to
\[\frac{\dim({\rho_{d} \otimes e})}{\vol(K^{d}/(K^{d}\cap A))},\]
where $\vol(K^d/(K^d \cap A))$ denotes the volume of $K^d/(K^d \cap A)$ with respect to the Haar measure $\mu$ on $G(F)/A(F)$ defined in Section~\ref{secfdc}.
Since $\dim(\rho_{d})=\dim(\rho_{d}\otimes e)$, the formal degree of two representations are equal.
\end{remark}
We also define the supercuspidal representation $\pi_{(S, \theta)}$ of $G(F)$, which is possibly reducible, as the representation obtained form the datum
\[\left((G^i)_{i=0}^{d}, y, (r_{i})_{i=0}^{d}, \kappa_{(S, \phi_{-1})}, (\phi_i)_{i=0}^{d}\right)\]
by the twisted Yu's construction. Then the set $[\pi_{(S, \theta)}]$ of irreducible constituents of $\pi_{(S, \theta)}$ is described as follows.
\begin{proposition}[{\cite[Corollary~3.4.7]{2019arXiv191203274K}}]
\label{bij1}
The map $\rho \mapsto \pi_{(S, \theta, \rho)}^{\epsilon}$ is a bijection
\[\Irr(N(S, G)(F)_{\theta}, \theta) \longrightarrow [\pi_{(S, \theta)}].\]
Moreover, the multiplicity of $\pi_{(S, \theta, \rho)}^{\epsilon}$ in $\pi_{(S, \theta)}$ is equal to the dimension of $\rho$.
\end{proposition}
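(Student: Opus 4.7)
The plan is to reduce the proposition to the preceding corollary on the decomposition of $\kappa_{(S,\phi_{-1})}$ by exploiting the fact that the twisted Yu construction is additive in the initial representation $\rho_{-1}$. Concretely, for a generic cuspidal $G$-datum $\Psi=(\overrightarrow{G}, y, \overrightarrow{r}, \rho_{-1}, \overrightarrow{\phi})$, the representation $\rho_d$ is built from $\rho_{-1}$ by iterated extension, inflation and tensoring with fixed representations $\phi'_i$, all of which commute with direct sums; compact induction is exact, so $\rho_{-1}\mapsto \pi_{\Psi}$ preserves direct sums. The construction for a reducible initial representation, as in the definition of $\pi_{(S,\theta)}$, therefore decomposes according to any decomposition of $\kappa_{(S,\phi_{-1})}$.

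Applying this to the decomposition
\[
\kappa_{(S,\phi_{-1})} \cong \bigoplus_{\rho \in \Irr(N(S,G)(F)_{\theta},\theta)} \dim(\rho) \cdot \kappa_{(S,\phi_{-1},\rho_{-1})}^{\epsilon}
\]
provided by the preceding corollary yields
\[
\pi_{(S,\theta)} \cong \bigoplus_{\rho} \dim(\rho) \cdot \pi_{(S,\theta,\rho)}^{\epsilon}.
\]
For each individual $\rho$, the tuple $\bigl((G^i)_{i=0}^{d}, y, (r_i)_{i=0}^{d}, \kappa_{(S,\phi_{-1},\rho_{-1})}^{\epsilon}, (\phi_i)_{i=0}^{d}\bigr)$ is a genuine generic cuspidal $G$-datum, because $\kappa_{(S,\phi_{-1},\rho_{-1})}^{\epsilon}$ is irreducible and its restriction to $G^{0}(F)_{y,0}$ is the inflation of a cuspidal representation (this latter property is inherited from $\kappa_{(S,\phi_{-1})}$ since the inflation-to-$G^0(F)_{[y]}$-and-restriction-to-$G^0(F)_{y,0}$ procedure is compatible with taking constituents). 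Hence each $\pi_{(S,\theta,\rho)}^{\epsilon}$ is an irreducible supercuspidal representation by the main theorem of \cite{MR1824988} as upgraded in \cite{FKS2021}. This simultaneously establishes surjectivity onto $[\pi_{(S,\theta)}]$ and the multiplicity statement, modulo distinctness.

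The main obstacle is injectivity: one must show that for non-isomorphic $\rho_1, \rho_2 \in \Irr(N(S,G)(F)_{\theta},\theta)$ the supercuspidals $\pi_{(S,\theta,\rho_1)}^{\epsilon}$ and $\pi_{(S,\theta,\rho_2)}^{\epsilon}$ are non-isomorphic. The cleanest path is to invoke the intertwining/distinguishing theorem for twisted Yu's construction established in \cite{FKS2021} (extending Yu's original intertwining result in \cite{MR1824988}): two outputs of twisted Yu's construction attached to the same $(\overrightarrow{G},y,\overrightarrow{r},\overrightarrow{\phi})$ are isomorphic if and only if the initial depth-zero pieces are isomorphic as $G^{0}(F)_{[y]}$-representations. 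Since the preceding corollary produces pairwise non-isomorphic $\kappa_{(S,\phi_{-1},\rho_{-1})}^{\epsilon}$, distinctness on the $G(F)$ side follows. An equivalent approach, if a black-box distinguishing theorem is undesirable, is to recover $\kappa_{(S,\phi_{-1},\rho_{-1})}^{\epsilon}$ from $\pi_{(S,\theta,\rho)}^{\epsilon}$ by Frobenius reciprocity applied to the depth-zero part of the type $(K^d,\rho_d\otimes e)$, thereby showing directly that the isomorphism class of $\rho$ is an invariant of $\pi_{(S,\theta,\rho)}^{\epsilon}$.
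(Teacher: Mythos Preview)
The paper does not give its own proof of this proposition: it is recorded verbatim as \cite[Corollary~3.4.7]{2019arXiv191203274K} and invoked as a black box. So there is nothing in the present paper to compare your argument against.

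That said, your outline is a faithful reconstruction of how the result is actually established in \cite{2019arXiv191203274K}. The reduction to the depth-zero corollary via additivity of the (twisted) Yu construction in the initial representation $\rho_{-1}$ is exactly the mechanism used there, and the irreducibility of each $\pi_{(S,\theta,\rho)}^{\epsilon}$ follows as you say. For the injectivity step, Kaletha does not appeal to a general distinguishing theorem from \cite{FKS2021} but rather to the Hakim--Murnaghan analysis of equivalences among Yu's supercuspidals (which in particular shows that, with the other pieces of the datum fixed, the isomorphism class of the output determines the isomorphism class of the depth-zero input). Your first suggested route is therefore slightly anachronistic in its citation, and your second route---recovering $\kappa_{(S,\phi_{-1},\rho_{-1})}^{\epsilon}$ from the type by Frobenius reciprocity---is closer in spirit to what is done. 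Either way the logic is sound; just be aware that the paper you are writing for treats the whole statement as an imported fact rather than reproving it.
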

\section{Torally wild $L$-parameters and torally wild $L$-packet data}
\label{seclpara}
In \cite{2019arXiv191203274K}, Kaletha defined the notions of torally wild $L$-parameters and torally wild $L$-packet data.
First, we recall the definitions of these notions.
\begin{definition}[{\cite[Definition~4.1.2]{2019arXiv191203274K}}]
A discrete $L$-parameter
\[\varphi \colon W_F \longrightarrow {^{L}G}\]
is called torally wild if the projection of $\varphi(P_F)$ on $\widehat{G}$ is contained in a maximal torus of $\widehat{G}$. 
\end{definition}
If $p$ dose not divide the order of the Weyl group of $G$, any discrete $L$-parameter $\varphi: W_F \to {^{L}G}$ is torally wild \cite[Lemma~4.1.3]{2019arXiv191203274K}.
\begin{definition}[{\cite[Definition~4.1.4]{2019arXiv191203274K}}]
A torally wild $L$-packet datum is a tuple $(S, \widehat{j}, \chi_0, \theta)$, where
\begin{enumerate}
\item $S$ is a torus of dimension equal to the absolute rank of $G$, defined over $F$ and splits over a tamely ramified extension of $F$;
\item $\widehat{j}\colon \widehat{S}\to \widehat{G}$ is an embedding of complex reductive groups whose $\widehat{G}$-conjugacy class is $\Gamma_F$-stable, i.\,e., for any $\gamma \in \Gamma_F$, the embedding $\gamma \circ \widehat{j} \circ \gamma^{-1} \colon \widehat{S} \to \widehat{G}$ is $\widehat{G}$-conjugate to $\widehat{j}$;
\item$\chi_0=(\chi_{\alpha_0})_{\alpha_0\in R(G, S^0)}$ is tamely ramified $\chi$-data for $R(G,S^0)$, as explained below;
\item $\theta\colon S(F) \to \mathbb{C}^{\times}$ is a character,
\end{enumerate}
subject to the condition that $(S, \theta)$ is a tame $F$-non-singular elliptic pair in the sense of \cite[Definition~3.4.1]{2019arXiv191203274K}.
\end{definition}
We explain the third point. 
Let $G'$ be the quasi-split inner form of $G$.
As explained in \cite[5.1]{MR4013740}, the embedding $\widehat{j}$ determines a $\Gamma_{F}$-stable conjugacy class of embeddings $j\colon S \to G'$. By choosing an embedding $j\colon S \to G'$, and pulling back the root system $R(G', jS)$, 
we obtain a $\Gamma_F$-invariant root system $R(G, S)\subset X^{*}(S)$, which dose not depend on the choice of $j$.
Let $E$ be the splitting field of $S$. For each positive real number $r$, we define
\[R_{r}=\{
\alpha\in R(G, S) \mid (\theta \circ N_{E/F} \circ \check{\alpha})(E_{r}^{\times})=1
\},\]
where $N_{E/F}$ denotes the norm map $S(E) \to S(F)$.
We also define $R_{r+}=\cap_{s>r} R_{s}$ for $r \ge 0$.
Let $S^0$ be the connected component of the intersection of the kernels of all elements of $R_{0+}$ and $R(G,S^0)$ be the image of $R(G, S)\backslash R_{0+}$ under the restriction map $X^{*}(S) \to X^{*}(S^0)$. 

We now recall the definition of $\chi$-data (see \cite[2.5]{MR909227}).
Let $R=R(G, S)$ or $R(G, S^0)$.
For $\alpha \in R$, let $\Gamma_{\alpha}$ be the stabilizer of $\alpha$ in $\Gamma_{F}$ and $F_{\alpha}$ be the corresponding fixed subfield of $F^{\text{sep}}$.
We also define $\Gamma_{\pm \alpha}$ be the stabilizer of the set $\{\alpha, -\alpha\}$ and $F_{\pm \alpha}$ be the corresponding fixed subfield of $F^{\text{sep}}$.
We say that $\alpha$ is symmetric if $F_{\alpha}/F_{\pm \alpha}$ is a quadratic extension, and asymmetric if $F_{\alpha}=F_{\pm \alpha}$.
Let $R_{\text{sym}}$ denote the set of symmetric roots $\alpha$ in $R$.
For $\alpha \in R_{\text{sym}}$, we say that $\alpha$ is unramified (resp.\,ramified) if the extension $F_{\alpha}/F_{\pm \alpha}$ is unramified (resp.\,ramified).

A set of $\chi$-data for $R$ consists of characters $\chi_{\alpha}\colon F_{\alpha}^{\times} \to \mathbb{C}^{\times}$, one for each $\alpha \in R$, having the properties $\chi_{-\alpha}=\chi_{\alpha}^{-1}$, $\chi_{\gamma(\alpha)}=\chi_{\alpha} \circ \gamma^{-1}$ for each $\gamma \in \Gamma_{F}$, and $\chi_{\alpha}\restriction_{F_{\pm \alpha}^{\times}}$ is the non-trivial quadratic character of $F_{\pm \alpha}^{\times}$ which is trivial on the image of the norm map $F_{\alpha}^{\times} \to F_{\pm\alpha}^{\times}$ for $\alpha \in R_{\text{sym}}$.
A set of $\chi$-data $\chi=(\chi_{\alpha})_{\alpha\in R}$ for $R$ is called unramified (resp.\,tamely ramified) if each character $\chi_{\alpha}$ is unramified (resp.\,tamely ramified), i.\,e., trivial on $\mathcal{O}_{F_{\alpha}}^{\times}$. (resp.\,trivial on $(F_{\alpha})_{0+}^{\times}$). 
We also say that a set of $\chi$-data $\chi=(\chi_{\alpha})_{\alpha\in R}$ for $R$ is minimally ramified, if $\chi_{\alpha}=1$ for asymmetric $\alpha$, $\chi_{\alpha}$ is unramified for unramified symmetric $\alpha$, and $\chi_{\alpha}$ is tamely ramified for ramified symmetric $\alpha$. 

%\begin{definition}[{\cite[Definition~4.1.6]{2019arXiv191203274K}}]
%A morphism $(S, \widehat{j}, \chi_0, \theta) \to (S', \widehat{j}', \chi'_0, \theta')$ of totally wild supercuspidal $L$-packet data is a tuple $(\iota, g, \zeta_0)$, where
%\begin{enumerate}
%\item $\iota\colon S\to S'$ is an isomorphism of $F$-tori;
%\item $g\in \hat{G}$;
%\item $\zeta_0=(\zeta_{\alpha'_{0}})_{\alpha'_{0}}$ is a set of $\zeta$-data for $R(S'_{0}, F)$ in the sense of \cite[Definition~4.6.4]{MR4013740}
%\end{enumerate}
%such that $\widehat{j}\circ \hat{\iota}=\text{Ad}(g) \circ \widehat{j}'$,$\chi_{\alpha'_{0}\circ \iota}=\chi'_{\alpha'_{0}}$, and $(\zeta_{S'}^{-1}\cdot \theta')\circ \iota=\theta$, where $\zeta=\text{inf}\zeta_{0}$ in the sense of and $\zeta_{S'}$ is the character of $S'(F)$ corresponding to $\zeta$ as in \cite[Definition~4.6.5]{MR4013740}.
%\end{definition}
Kaletha defined the notion of morphisms between torally wild $L$-packet data \cite[Definition~4.1.6]{2019arXiv191203274K} and gave a bijection between the set of $\widehat{G}$-conjugacy classes of torally wild $L$-parameters and the set of isomorphism classes of torally wild $L$-packet data \cite[Proposition~4.1.8]{2019arXiv191203274K}. 
We recall the way to construct torally wild $L$-parameters from torally wild $L$-packet data briefly.

Let $(S, \widehat{j}, \chi_0, \theta)$ be a torally wild $L$-packet datum.
From the $\chi$-data $\chi_0$ for $R(G, S^0)$, we obtain $\chi$-data $\chi=(\chi_{\alpha})_{\alpha}$ for $R(G, S)$ as follows.
For $\alpha \in R(G, S) \backslash R_{0+}$, we let $\chi_{\alpha}=\chi_{\alpha_0} \circ N_{F_{\alpha}/F_{\alpha_{0}}}$, where $\alpha_0$ denotes the image of $\alpha$ via the restriction map $X^{*}(S) \to X^{*}(S^0)$, and $N_{F_{\alpha}/F_{\alpha_0}}$ denotes the norm map $F_{\alpha}^{\times} \to F_{\alpha_0}^{\times}$.
For $\alpha \in R_{0+}$, we let $\chi_{\alpha}$ be trivial if $\alpha$ is asymmetric and be the non-trivial unramified quadratic character if $\alpha$ is unramified symmetric.
We note that since $(S, \theta)$ is a tame $F$-non-singular elliptic pair, the action of $I_F$ on $R_{0+}$ preserves a set of positive roots. Hence, every symmetric root $\alpha \in R_{0+}$ is unramified.

We extend $\widehat{j}$ to an $L$-embedding $^{L}\!j\colon {^{L}S} \to {^{L}G}$ by using this $\chi$-data as in \cite[6.1]{2019arXiv190705173K} (see also \cite[2.6]{MR909227}).
Then, let $\varphi={^{L}\!j}\circ \varphi_{S}$, where $\varphi_{S}\colon W_F \to {^{L}S}$ is the $L$-parameter attached to the character $\theta$ via the local Langlands correspondence for tori \cite[Theorem~7.5]{MR2508725}.
In this way, we obtain from the tuple $(S, \widehat{j}, \chi_0, \theta)$ a torally wild $L$-parameter $\varphi$.

Finally, we explain the construction of the $L$-packet associated with a torally wild $L$-parameter $\varphi$ \cite[4.2]{2019arXiv191203274K}. Let $(S, \widehat{j}, \chi_0, \theta)$ be the torally wild $L$-packet datum whose isomorphism class corresponds to the $\widehat{G}$-conjugacy class of $\varphi$. 
As explained in \cite[4.2]{2019arXiv191203274K}, we may assume that the $\chi$-data for $R(G, S)$ obtained from $\theta$ as in \cite[3.5]{2019arXiv191203274K} is equal to the $\chi$-data for $R(G, S)$ obtained from $\chi_0$ as above. 
As explained in \cite[5.1]{MR4013740}, the embedding $\widehat{j}$ determines the class of admissible embeddings $j \colon S \to G$.
Then, we define the $L$-packet $\prod_{\varphi}(G)$ as the union of $[\pi_{(jS, \theta_j)}]$, where $j\colon S \to G$ ranges over the $G(F)$-conjugacy classes of admissible embeddings defined over $F$, and $\theta_j$ is a character of $jS(F)$ defined by $\theta_j(j(s))=\theta(s)$ for $s\in S(F)$. The resulting $L$-packet $\prod_{\varphi}(G)$ dose not depend on the choice of torally wild $L$-packet datum $(S, \widehat{j}, \chi_0, \theta)$ which satisfies the above condition. 
\section{Endoscopy}
\label{secend}
Let $\varphi$ be a torally wild $L$-parameter which corresponds to the torally wild $L$-packet datum $(S, \widehat{j}, \chi_0, \theta)$.
We regard $Z$ as a subgroup of $S$ by using an admissible embedding $j \colon S \to G$.
We note that this structure dose not depend on the choice of an embedding $j$ (see \cite[5.1]{MR4013740}).
Let $[\widehat{\overline{S}}]^{+}$ be the preimage of $\widehat{S}^{\Gamma_F}$ via the map $\widehat{S/Z} \to \widehat{S}$.
We also define the group $S_{\varphi}^{+}$ be the preimage of $S_{\varphi}$ via the map $\widehat{G/Z} \to \widehat{G}$.
For these groups, we denote by $\pi_0(*)$ the groups of connected components.

In the previous section, we define the $L$-packet $\prod_{\varphi}(G)$ associated with a torally wild $L$-parameter $\varphi$. 
In \cite[4.4, 4.5]{2019arXiv191203274K}, Kaletha constructed a bijection between $\prod_{\varphi}(G)$ and a set of irreducible representations of $\pi_0(S_{\varphi}^{+})$ satisfying some conditions. 
\begin{remark}
The formulation of the local Langlands correspondence in \cite{2019arXiv191203274K} uses the group $\pi_0(S_{\varphi}^{+})$ to parametrize the elements of $L$-packets.
On the other hand, the formulation of the local Langlands correspondence of \cite{MR2217572}, on which Conjecture~\ref{fdc} depends, uses the group $\pi_0(S_{\varphi}^{\text{sc}})$ as explained in Section~\ref{secfdc}.
However, in \cite[4.6]{MR3815567}, Kaletha gave a dimension-preserving bijection between sets of irreducible representations of two different $S$-groups which is compatible with both formulations of the local Langlands correspondence, and proved that the local Langlands correspondence in \cite{2019arXiv191203274K} implies the one in \cite{MR2217572}. 
Therefore, it is enough to prove Conjecture~\ref{fdc} for the formulation in \cite{2019arXiv191203274K}.
\end{remark}
The bijection above is given by combining the bijections
\[[\pi_{(jS, \theta_j)}] \longleftrightarrow \text{Irr}(\pi_0(S_{\varphi}^{+}), \eta)\]
for all admissible embeddings $j\colon S \to G$,
where $\eta$ is the character of $\pi_0([\widehat{\overline{S}}]^{+})$ which is determined by $j$ as in \cite[4.4]{2019arXiv191203274K}, and $\text{Irr}(\pi_0(S_{\varphi}^{+}), \eta)$ denotes the set of irreducible representations of $\pi_0(S_{\varphi}^{+})$ whose restriction to $\pi_0([\widehat{\overline{S}}]^{+})$ contains $\eta$.
Here, we regard $\pi_0([\widehat{\overline{S}}]^{+})$ as a subgroup of $\pi_0(S_{\varphi}^{+})$ by using $\widehat{j}$ (see \cite[Corollary~4.3.4]{2019arXiv191203274K}).

We now explain the bijection
\[[\pi_{(jS, \theta_j)}] \longleftrightarrow \text{Irr}(\pi_0(S_{\varphi}^{+}), \eta)\]
in \cite{2019arXiv191203274K}.
First, we assume that $\varphi$ is essentially of depth-zero in the sense of \cite[4.5]{2019arXiv191203274K}.

Proposition~\ref{bij1} implies that the map $\rho \mapsto \pi_{(jS, \theta_j, \rho)}^{\epsilon}$ is a bijection
\begin{align}
\label{bijc1}
\text{Irr}(N(jS, G)(F)_{\theta_j}, \theta_j) \longleftrightarrow [\pi_{(jS, \theta_j)}].
\end{align}

Let $\Box_2$ be the pushout of $\theta_{j} \colon jS(F) \to \mathbb{C}^{\times}$ and the inclusion $jS(F) \to N(jS, G)(F)_{\theta_j}$.
Then, we obtain the extension
\[1 \to \mathbb{C}^{\times} \to \Box_2 \to \frac{N(jS, G)(F)_{\theta_j}}{jS(F)} \to 1,\]
and \cite[Lemma~C.5]{2019arXiv191203274K} implies that there exists a natural dimension-preserving bijection
\begin{align}
\label{bijc2}
\text{Irr}(N(jS, G)(F)_{\theta_j}, \theta_j) \longleftrightarrow \text{Irr}(\Box_2, \text{id}),
\end{align}
where $\text{Irr}(\Box_2, \text{id})$ denotes the set of irreducible representations of $\Box_2$ whose restriction to $\mathbb{C}^{\times}$ is $\text{id}$-isotypic.

On the other hand, according to \cite[Corollary~4.3.4]{2019arXiv191203274K}, there exists an exact sequence
\[1 \to \pi_0([\widehat{\overline{S}}]^{+}) \to \pi_0(S_{\varphi}^{+}) \to \Omega(S, G)(F)_{\theta} \to 1,\]
where $\Omega(S, G)$ denotes the $\Gamma_F$-invariant subgroup of the automorphism group of $S$ defined in \cite[5.1]{MR4013740}, and $\Omega(S, G)(F)_{\theta}$ denotes the stabilizer of $\theta$ in $\Omega(S, G)(F)$.
Let $\pi_0(S_{\varphi}^{+})_{\eta}$ be the stabilizer of $\eta$ in $\pi_0(S_{\varphi}^{+})$ by the conjugate action, and $\Omega(S, G)(F)_{\theta, \eta}$ be the stabilizer of $\eta$ in $\Omega(S, G)(F)_{\theta}$ by the conjugate action. Then, we obtain an exact sequence
\[1 \to \pi_0([\widehat{\overline{S}}]^{+}) \to \pi_0(S_{\varphi}^{+})_{\eta} \to \Omega(S, G)(F)_{\theta, \eta} \to 1\]
from the exact sequence above.
We define $\Box_1$ be the pushout of $\eta\colon \pi_0([\widehat{\overline{S}}]^{+}) \to \mathbb{C}^{\times}$ and the map $\pi_0([\widehat{\overline{S}}]^{+}) \to \pi_0(S_{\varphi}^{+})_{\eta}$ above.
Then, we obtain the extension
\[1\to \mathbb{C}^{\times} \to \Box_1 \to \Omega(S, G)(F)_{\theta, \eta} \to 1,\]
and \cite[Lemma~C.5]{2019arXiv191203274K} implies that there exists a natural bijection
\begin{align}
\label{bijc3}
\text{Irr}(\pi_0(S_{\varphi}^{+}), \eta) \longleftrightarrow \text{Irr}(\Box_1, \text{id}),
\end{align}
where $\text{Irr}(\Box_1, \text{id})$ denotes the set of irreducible representations of $\Box_1$ whose restriction to $\mathbb{C}^{\times}$ is $\text{id}$-isotypic.
The bijection \eqref{bijc3} is a composition of the natural dimension-preserving bijection
\[\text{Irr}(\pi_0(S_{\varphi}^{+})_{\eta}, \eta) \longleftrightarrow \text{Irr}(\Box_1, \text{id})\]
and the bijection 
\[\text{Irr}(\pi_0(S_{\varphi}^{+})_{\eta}, \eta) \longleftrightarrow \text{Irr}(\pi_0(S_{\varphi}^{+}), \eta)\]
given by the induced representation (see the proof of \cite[Lemma~C.5]{2019arXiv191203274K}).
Here, $\text{Irr}(\pi_0(S_{\varphi}^{+})_{\eta}, \eta)$ denotes the set of irreducible representations of $\pi_0(S_{\varphi}^{+})_{\eta}$ whose restriction to $\pi_0([\widehat{\overline{S}}]^{+})$ is $\eta$-isotypic.
Hence, for $\rho \in \text{Irr}(\pi_0(S_{\varphi}^{+}), \eta)$, the dimension of the representation of $\Box_1$ which corresponds to $\rho$ via the bijection \eqref{bijc3} is
\[\abs{\pi_0(S_{\varphi}^{+})/\pi_0(S_{\varphi}^{+})_{\eta}}^{-1} \cdot \dim(\rho).\]

Moreover, \cite[Lemma~3.4.10]{MR4013740} and \cite[Lemma~E.1]{2019arXiv191203274K} imply that $j$ gives an isomorphism
\[\Omega(S, G)(F)_{\theta, \eta} \longrightarrow N(jS, G)(F)_{\theta_j}/jS(F),\]
and \cite[Proposition~4.5.1]{2019arXiv191203274K} implies that 
the extensions
\begin{align*}
&1 \to \mathbb{C}^{\times} \to \Box_2 \to \frac{N(jS, G)(F)_{\theta_j}}{jS(F)}\simeq \Omega(S, G)(F)_{\theta, \eta} \to 1\\
&1\to \mathbb{C}^{\times} \to \Box_1 \to \Omega(S, G)(F)_{\theta, \eta} \to 1
\end{align*}
above are isomorphic.
Therefore, there exists a dimension-preserving bijection 
\begin{align}
\label{bijc4}
\text{Irr}(\Box_1, \text{id}) \longleftrightarrow \text{Irr}(\Box_2, \text{id}).
\end{align}
By combining the bijections~\eqref{bijc1}, \eqref{bijc2}, \eqref{bijc3}, \eqref{bijc4}, we obtain a bijection
\[[\pi_{(jS, \theta_j)}] \longleftrightarrow \text{Irr}(\pi_0(S_{\varphi}^{+}), \eta).\]

We now consider the general case.
Recall that we obtain a sequence of twisted Levi subgroups
\[\overrightarrow{G}=\left(jS=G^{-1}\subset G^0\subsetneq \ldots \subsetneq G^d=G\right)\]
from the pair $(jS, \theta_j)$.
As explained in \cite[4.4]{2019arXiv191203274K}, $\varphi$ is decomposed as $\varphi= ^{L}\!j_{G^0, G} \circ \varphi_{G^0}$, where $\varphi_{G^0} \colon W_F \to {^{L}G^0}$ is the torally wild $L$-parameter of $G^0$ corresponding to the torally wild $L$-packet datum $(S, \widehat{j}, \emptyset, \theta)$, which is essentially of depth-zero, and $^{L}\!j_{G^0, G}\colon {^{L}G^0} \to {^{L}G}$ is the extension of $\widehat{G^0} \to \widehat{G}$ obtained from the $\chi$-data $\chi_0$ for $R(G, S^0)$ as in \cite[6.1]{2019arXiv190705173K}. 
Moreover, the $L$-embedding $^{L}\!j_{G^0, G}$ induces the identification $S_{\varphi}^{+}= S_{\varphi_{G^0}}^{+}$. In particular, there exists a canonical dimension-preserving bijection
\begin{align}
\label{bijc5}
\text{Irr}(\pi_0(S_{\varphi}^{+}), \eta) \longleftrightarrow \text{Irr}(\pi_0(S_{\varphi_{G^0}}^{+}), \eta).
\end{align}
On the other hand, regarding $(jS, \theta_j)$ as a tame $F$-non-singular elliptic pair of $G^0$, we obtain the supercuspidal representation $\pi_{(jS, \theta_j)}^{G^0}$ of $G^0(F)$. 
According to Proposition~\ref{bij1}, we obtain a bijection
\[\text{Irr}(N(jS, G^0)(F)_{\theta_j}, \theta_j) \longleftrightarrow [\pi_{(jS, \theta_j)}^{G^0}].\]
Since $N(jS, G)(F)_{\theta_j}=N(jS, G^0)(F)_{\theta_j}$ \cite[Lemma~3.6.5]{MR4013740}, we obtain a bijection
\begin{align}
\label{bijc6}
[\pi_{(jS, \theta_j)}^{G^0}] \longleftrightarrow \text{Irr}(N(jS, G^0)(F)_{\theta_j}, \theta_j) \longleftrightarrow \text{Irr}(N(jS, G)(F)_{\theta_j}, \theta_j)\longleftrightarrow [\pi_{(jS, \theta_j)}].
\end{align}
Combining the bijections~\eqref{bijc5}, \eqref{bijc6} and the bijection
\[[\pi_{(jS, \theta_j)}^{G^0}] \longleftrightarrow \text{Irr}(\pi_0(S_{\varphi_{G^0}}^{+}), \eta)\]
obtained from depth-zero case, we obtain a bijection
\begin{align}
\label{bijc7}
[\pi_{(jS, \theta_j)}] \longleftrightarrow \text{Irr}(\pi_0(S_{\varphi}^{+}), \eta).
\end{align}
for general case.

For $\rho \in \text{Irr}(\pi_0(S_{\varphi}^{+}), \eta)$, we write $\pi_{\rho}$ for the element in $[\pi_{(jS, \theta_j)}]$ which corresponds to $\rho$ via the bijection \eqref{bijc7}. 
\section{Key lemma}
\label{seclem}
In this section, we prove a variant of \cite[Proposition~B.3]{2019arXiv191203274K} to compare the dimension of $\rho$ and the formal degree of $\pi_{\rho}$.

Contrary to the conventions of the rest of the paper, in this section only, we use the notations below.
Let $G$ be a locally profinite group, $H \subset G$ be an open normal subgroup of finite index. We assume that $G/H$ is abelian. Let $N \subset G$ be a closed subgroup, write $N_H=N \cap H$, and let $S \subset N_H$ be an abelian open normal subgroup of  $N$ of finite index.

The group $N$ acts on $G$ by the conjugate action, and we can form $G \rtimes N$. Since $H$ is a normal subgroup of $G$, we can also define the subgroup $H \rtimes N$ of $G \rtimes N$. Since $G/H$ is abelian, $H \rtimes N$ is a normal subgroup of $G \rtimes N$.

Let $\theta$ be a smooth character of $S$ and $\text{Irr}(N_H, \theta)$ be the set of irreducible  representations of $N_H$ whose restriction to $S$ is $\theta$-isotypic. We assume that $N$ normalizes the character $\theta$, and every element in $\text{Irr}(N_H, \theta)$ is $1$-dimensional. 
Let $\sigma$ be a smooth of finite-length semisimple representation of $H \rtimes N$. We assume that for $s\in S$, $s^{-1}\rtimes s$ acts on $\sigma$ by $\theta(s)$.
We also assume that
\begin{enumerate}
\item \[\End_H(\sigma) = \bigoplus_{n \in N_{H}/S} \mathbb{C} \cdot \sigma(n^{-1} \rtimes n);\]
\item For each $g\in G$, the representation $^{g \rtimes 1}\!(\sigma\restriction_{H})$ is isomorphic to $\sigma\restriction_{H}$ if $g\in H\cdot N$, and $^{g \rtimes 1}\!(\sigma\restriction_{H})$ and $\sigma\restriction_{H}$ have no common irreducible constituents otherwise;
\item Every irreducible constituent of $\sigma\restriction_{H}$ has the same dimension.
\end{enumerate}
\begin{remark}
\label{rmk1}
We define the representation $\sigma'$ of $H \times N_H$ by $\sigma'(h,n)=\sigma(hn^{-1} \rtimes n)$. Since $S$ is of finite index in $N$, so is $N_{H}$, and $\sigma'$ is semisimple. Then \cite[Lemma~B.1]{2019arXiv191203274K} and the first condition above imply that for every $\widetilde{\theta} \in \text{Irr}(N_H, \theta)$, there exists exactly one irreducible constituent $\tau$ of $\sigma\restriction_{H}$ such that the multiplicity of $\tau \boxtimes \widetilde{\theta}$ in $\sigma'$ is non-zero. Moreover, in this case, the multiplicity of $\tau \boxtimes \widetilde{\theta}$ in $\sigma'$ is $1$. Hence, the third condition above implies that every $\widetilde{\theta}\in \text{Irr}(N_H, \theta)$ has the same the multiplicity in ${\sigma'}\restriction_{N_H}$. 
\end{remark}
Let $\text{Ind}_{H \rtimes N}^{G \rtimes N} \sigma$ be the induced representation on the space
\[\{f \colon G \rtimes N \to \sigma \mid f(xy) = \sigma(x) f(y) \ (x \in H \rtimes N, y \in G \rtimes N)\}.\]
We define the representation $I_{\sigma}$ of $G \times N$ by $I(g,n)= \left(\text{Ind}_{H \rtimes N}^{G \rtimes N} \sigma\right)(gn^{-1} \rtimes n)$.
Since $H$ is of finite index in $G$, $\text{Ind}_{H \rtimes N}^{G \rtimes N} \sigma$ and $I_{\sigma}$ are semisimple.
For $f \in \text{Ind}_{H \rtimes N}^{G \rtimes N} \sigma, g \in G, n \in N, s \in S$, we obtain
\begin{align*}
\left((\text{Ind}_{H \rtimes N}^{G \rtimes N} \sigma)(s^{-1} \rtimes s)f\right)(g \rtimes n)
&=
f\left((g \rtimes n)(s^{-1} \rtimes s)\right)\\
&=
f\left(gns^{-1}n^{-1} \rtimes ns\right)\\
&=
f\left((ns^{-1}n^{-1} \rtimes nsn^{-1})(g \rtimes n)\right)\\
&=
\sigma(ns^{-1}n^{-1} \rtimes nsn^{-1})f(g \rtimes n)\\
&=
\theta(nsn^{-1})f(g \rtimes n)\\
&=
\theta(s)f(g \rtimes n).
\end{align*}
Therefore, $(1, s)$ acts on $I_{\sigma}$ by $\theta$ for $s \in S$.

Let $\text{Irr}(N, \theta)$ be the set of irreducible representations of $N$ whose restriction to $S$ is $\theta$-isotypic, and $[I_{\sigma}\restriction_{G}]$ be the set of irreducible constituents of the $G$-representation $I_{\sigma}\restriction_{G}$.

The representation $I_{\sigma}$ is decomposed as
\[I_{\sigma}=\bigoplus_{\pi \in [I_{\sigma}\restriction_{G}], \rho \in \text{Irr}(N, \theta)} (\pi \boxtimes \rho)^{\oplus m_{\pi, \rho}},\]
where $m_{\pi, \rho}$ is the multiplicity of $\pi \boxtimes \rho$ in $I_{\sigma}$.
\begin{lemma}
\label{keylem}
\begin{enumerate}
\item We have $m_{\pi, \rho} \in \{0,1\}$, and for any $\rho \in \Irr(N, \theta)$, there exists exactly one $\pi \in [I_{\sigma}\restriction_{G}]$ such that $m_{\pi, \rho}=1$. So the condition $m_{\pi, \rho} = 1$ defines a correspondence
\[\Irr(N, \theta) \longleftrightarrow [I_{\sigma}\restriction_{G}].\]
\item For $\rho \in \Irr(N, \theta)$, we write $\pi_{\rho}\in [I_{\sigma}\restriction_{G}]$ for the unique $G$-representation with $m_{\pi_{\rho}, \rho}=1$. Then,
\[\frac{\dim(\pi_{\rho})}{\dim(\rho)}=
\frac{\abs{G/H}\cdot \dim(\sigma)}{\abs{N/S}}.\]
\end{enumerate}
\end{lemma}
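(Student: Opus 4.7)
My strategy is to compute $I_\sigma\restriction_G$ and $I_\sigma\restriction_N$ by Mackey's formula, to identify $\End_G(I_\sigma\restriction_G)$ with a twisted group algebra of $N/S$, and then to read off (1) from Wedderburn's decomposition and (2) by comparing two expressions for $I_\sigma\restriction_N$. The key simplification is the canonical isomorphism $\psi\colon G\times N \to G\rtimes N$, $(g,n)\mapsto gn^{-1}\rtimes n$, which is valid because $N\subset G$ acts on $G$ by inner conjugation. Under $\psi$, the representation $I_\sigma$ becomes the ordinary induced representation $\operatorname{Ind}_{\widetilde H}^{G\times N}\widetilde\sigma$ with $\widetilde H = \psi^{-1}(H\rtimes N) = \{(g,n)\in G\times N : gn^{-1}\in H\}$ and $\widetilde\sigma(g,n) = \sigma(gn^{-1}\rtimes n)$; the diagonal subgroup $\{n^{-1}\rtimes n : n\in N\}$ of $G\rtimes N$ corresponds to the second factor $\{1\}\times N$, which manifestly commutes with $G\times\{1\}$.

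\textbf{Execution.} Mackey applied to $I_\sigma\restriction_G$ has a single double coset ($G\cdot\widetilde H = G\times N$, $G\cap\widetilde H = H$) and gives $I_\sigma\restriction_G = \operatorname{Ind}_H^G(\sigma\restriction_H)$. Mackey applied to $I_\sigma\restriction_N$ has double cosets indexed by $G/HN$, and a direct calculation shows that for every coset representative $(g_0,1)$ the intersection $(\{1\}\times N)\cap(g_0,1)\widetilde H(g_0,1)^{-1}$ equals $\{1\}\times N_H$ and the conjugated representation on it is $n\mapsto\sigma(n^{-1}\rtimes n)$, independent of $g_0$. Remark~\ref{rmk1}, combined with conditions (1) and (3), identifies this $N_H$-representation with $\tfrac{\dim\sigma}{\abs{N_H/S}}\operatorname{Ind}_S^{N_H}\theta$, whence
\[I_\sigma\restriction_N \;\cong\; \frac{\abs{G/HN}\cdot\dim\sigma}{\abs{N_H/S}}\operatorname{Ind}_S^N\theta \;=\; \frac{\abs{G/H}\cdot\dim\sigma}{\abs{N/S}}\operatorname{Ind}_S^N\theta.\]
Frobenius reciprocity, condition (2), and condition (1) give
\[\End_G(I_\sigma\restriction_G) \;=\; \bigoplus_{n\in N/N_H}\Hom_H(\sigma\restriction_H, {}^n\sigma\restriction_H),\]
which has dimension $\abs{N/N_H}\cdot\abs{N_H/S} = \abs{N/S}$. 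The operators $\{I_\sigma(1,n)\}_{n\in N/S}$ lie in $\End_G(I_\sigma\restriction_G)$ (because $\{1\}\times N$ commutes with $G\times\{1\}$) and satisfy $I_\sigma(1,n_1)I_\sigma(1,n_2) = I_\sigma(1,n_1n_2)$ and $I_\sigma(1,s) = \theta(s)\cdot\operatorname{id}$, so they induce an algebra homomorphism
\[\mathbb{C}^\theta[N/S] := \mathbb{C}[N]\big/\langle s-\theta(s) : s\in S\rangle \;\longrightarrow\; \End_G(I_\sigma\restriction_G).\]
Since every $\rho\in\Irr(N,\theta)$ appears in $I_\sigma\restriction_N$ by the Mackey computation, the twisted group algebra acts faithfully on $I_\sigma$; hence this map is injective, and therefore an isomorphism by dimensions. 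Wedderburn yields $\End_G(I_\sigma\restriction_G)\cong\bigoplus_{\rho\in\Irr(N,\theta)}M_{\dim\rho}(\mathbb{C})$, which forces $I_\sigma\restriction_G = \bigoplus_\rho(\dim\rho)\pi_\rho$ with pairwise distinct $\pi_\rho\in\Irr(G)$; the commuting $N$-action on the multiplicity space of $\pi_\rho$ is $\rho$ itself, so $I_\sigma = \bigoplus_\rho\pi_\rho\boxtimes\rho$, proving (1). Comparing $I_\sigma\restriction_N = \bigoplus_\rho(\dim\pi_\rho)\rho$ with the Mackey expression above gives $\dim\pi_\rho = \tfrac{\abs{G/H}\dim\sigma}{\abs{N/S}}\dim\rho$, which is (2).

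\textbf{Main obstacle.} The crux is the uniform Mackey computation of $I_\sigma\restriction_N$ across the $G/HN$-many double cosets, which depends on the observation that conjugation by any $(g_0,1)\in G\times N$ fixes the second factor $\{1\}\times N$ pointwise (equivalently, the diagonal subgroup $\{n^{-1}\rtimes n\}$ commutes with $G\rtimes\{1\}$ in $G\rtimes N$). Once this uniformity and the dimension count $\dim\End_G(I_\sigma\restriction_G) = \abs{N/S}$ are established, the isomorphism $\mathbb{C}^\theta[N/S]\cong\End_G(I_\sigma\restriction_G)$, the Wedderburn decomposition, and the final dimension comparison are formal.
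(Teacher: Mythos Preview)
Your proof is correct. The Mackey computation of $I_\sigma\!\restriction_N$ is essentially the paper's own argument transported through your isomorphism $\psi\colon G\times N\to G\rtimes N$: the paper works directly in $G\rtimes N$ with the diagonal subgroup $N'=\{n^{-1}\rtimes n\}$ and verifies by hand that $(g\rtimes 1)(n^{-1}\rtimes n)(g^{-1}\rtimes 1)=n^{-1}\rtimes n$, which is exactly your observation that $(g_0,1)$ centralizes $\{1\}\times N$ in $G\times N$. Where you genuinely diverge is in part~(1). The paper simply cites \cite[Lemma~B.1, Proposition~B.3]{2019arXiv191203274K}; you instead compute $\dim\End_G(I_\sigma\!\restriction_G)=\abs{N/S}$ from conditions~(1)--(2), exhibit a surjection $\mathbb{C}^\theta[N/S]\twoheadrightarrow\End_G(I_\sigma\!\restriction_G)$ via the commuting $N$-action, and invoke Wedderburn to match the simple factors $M_{\dim\rho}(\mathbb{C})$ with the isotypic blocks $M_{m_\pi}(\mathbb{C})$. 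This is a self-contained and more structural route: it makes transparent why $m_{\pi,\rho}\in\{0,1\}$ (the multiplicity space of each $\pi$ is an irreducible $\mathbb{C}^\theta[N/S]$-module), and it yields as a byproduct that the multiplicity of $\pi_\rho$ in $I_\sigma\!\restriction_G$ equals $\dim\rho$, a fact the paper does not isolate. The paper's approach is shorter because it outsources the bijection to Kaletha's appendix; yours trades that citation for a clean algebra isomorphism and a Wedderburn argument, at the cost of a few more lines.
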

\begin{proof}
The first claim follows from \cite[Lemma~B.1]{2019arXiv191203274K} and \cite[Proposition~B.3]{2019arXiv191203274K}. 

We prove the second claim.
Let $\rho \in \text{Irr}(N, \theta)$. Because of the first claim, $\dim(\pi_{\rho})$ is equal to the multiplicity of $\rho$ in $(I_{\sigma})\restriction_{N}$, which is equal to the dimension of
\[\text{Hom}_{N} (\rho, (I_{\sigma})\restriction_{N}).\]
We write $N'=\{n^{-1} \rtimes n \mid n\in N\} \subset G \rtimes N$. By the isomorphism $N' \simeq N$ defined by $n^{-1}\rtimes n \mapsto n$, we regard $\rho$ as a representation of $N'$. Then, 
\begin{align*}
\text{Hom}_{N} (\rho, (I_{\sigma})\restriction_{N}) &= \text{Hom}_{N'} (\rho, (\text{Ind}_{H \rtimes N}^{G \rtimes N} \sigma)\restriction_{N'})\\
&=\text{Hom}_{N'} (\rho, \bigoplus_{g \in N' \backslash (G \rtimes N) / (H \rtimes N)} \text{Ind}_{N' \cap ^g(H \rtimes N)} ^{N'} {^g\!{\sigma}}).
\end{align*}
Let $C$ be a complete system of a representatives for $G/HN$.
For $g\in G, h\in H, n_1, n_2 \in N$, we obtain
\[({n_1}^{-1}\rtimes n_1)(g \rtimes 1)(h \rtimes n_2)=gh{n_1}^{-1} \rtimes {n_1}{n_2}.\]
This calculation implies that the set 
\[\{g\rtimes 1\mid g\in C\}\]
is a complete system of a representatives for $N' \backslash (G \rtimes N )/ (H \rtimes N)$. 

Therefore, we obtain
\[\text{Hom}_{N'} (\rho, \bigoplus_{g \in N' \backslash (G \rtimes N) / (H \rtimes N)} \text{Ind}_{N' \cap ^g(H \rtimes N)} ^{N'} {^g\!{\sigma}})
= \text{Hom}_{N'} (\rho, \, \bigoplus_{g\in C} \text{Ind}_{N' \cap ^{g \rtimes 1}\!(H \rtimes N)} ^{N'} {^{g \rtimes 1}\!{\sigma}}).
\]

For $g \in C, h\in H, n\in N$, we obtain
\[(g \rtimes 1)(h \rtimes n)(g^{-1}\rtimes 1)=ghng^{-1}n^{-1} \rtimes n,\]
which is an element of $N'$ if and only if $ghng^{-1}n^{-1}=n^{-1}$, i.\,e., $h=n^{-1}$.
Moreover, in the case $h=n^{-1}$, we obtain
\[(g \rtimes 1)(h \rtimes n)(g^{-1}\rtimes 1)=(g \rtimes 1)(n^{-1} \rtimes n)(g^{-1}\rtimes 1) = n^{-1} \rtimes n.\]
Let $(N_H)'$ be the set
\[\{{n}^{-1} \rtimes n \mid n\in N_H\}.\]
Then, the calculations above imply that for $g \in C$, 
\[N' \cap ^{g \rtimes 1}\!(H \rtimes N) = (N_H)',\]
and the conjugate action of $g \rtimes 1$ on $(N_H)'$ is trivial. 

Hence, we deduce that
\[\Hom_{N'} (\rho, \, \bigoplus_{g\in C} \text{Ind}_{N' \cap ^{g \rtimes 1}\!(H \rtimes N)} ^{N'} {^{g \rtimes 1}\!{\sigma}})
= \Hom_{N'} (\rho, \, \bigoplus_{g\in C} \text{Ind}_{(N_H)'} ^{N'} \sigma)\] 
By the isomorphism $N' \simeq N, n^{-1}\rtimes n \mapsto n$, $(N_H)'$ maps to $N_H$. Therefore, by regarding $\sigma$ as a representation of $N_H$ by the isomorphism $N_H \simeq (N_H)' \subset H \rtimes N$, which is equal to ${\sigma'}\restriction_{N_H}$ in Remark \ref{rmk1}, we obtain
\begin{align*}
\text{Hom}_{N'} (\rho, \, \bigoplus_{g\in C} \text{Ind}_{(N_H)'} ^{N'} {\sigma'}\restriction_{N_H}) 
&= \text{Hom}_{N} (\rho, \, \bigoplus_{g\in C} \text{Ind}_{N_H} ^{N} {\sigma'}\restriction_{N_H})\\
&= \bigoplus_{g\in C} \text{Hom}_{N} (\rho, \text{Ind}_{N_H}^{N} {\sigma'}\restriction_{N_H})\\
& \simeq \bigoplus_{g\in C} \text{Hom}_{N_H} (\rho, {\sigma'}\restriction_{N_H}).
\end{align*}
According to Remark \ref{rmk1}, there exists an integer $m$ such that ${\sigma'}\restriction_{N_H}$ is decomposed as
\[{\sigma'}\restriction_{N_H} 
= \left(
\bigoplus_{\widetilde{\theta}\in \text{Irr}(N_H, \theta)}
\widetilde{\theta} 
\right)^{\oplus m}.
\]
Since every element in $\text{Irr}(N_H, \theta)$ is $1$-dimensional, we obtain
\[m = \frac{\dim(\sigma)}{\abs{\text{Irr}(N_H, \theta)}}=\frac{\dim(\sigma)}{\abs{N_H/S}}.\]
Therefore, if we write the multiplicity of $\widetilde{\theta} \in \text{Irr}(N_H, \theta)$ in $\rho\restriction_{N_H}$ by $m_{\rho}(\widetilde{\theta})$,
the dimension of $\text{Hom}_{N}(\rho, (I_{\sigma})\restriction_{N})$ is equal to
\begin{align*}
\abs{C}\cdot m \cdot \sum_{\widetilde{\theta}\in \Irr(N_H, \theta)} m_{\rho}(\widetilde{\theta})&=
\abs{G/HN}\cdot m \cdot \sum_{\widetilde{\theta}\in \Irr(N_H, \theta)} m_{\rho}(\widetilde{\theta})\\
&=\abs{G/HN}\cdot m \cdot \dim(\rho)\\
&= \frac{\abs{G/HN}\cdot \dim(\sigma) \cdot \dim(\rho)}{\abs{N_H/S}}\\
&= \dim(\rho)\cdot \frac{\abs{G/H}\cdot \dim(\sigma)}{\abs{N/S}}.
\end{align*}
This completes the proof.
\end{proof}
\section{Formal degree of non-singular supercuspidal representations}
\label{secfdns}
In this section, we calculate the formal degree of non-singular supercuspidal representations.

Let $(S, \theta)$ be a tame $k_F$-non-singular elliptic pair and $\rho \in \text{Irr}(N(S, G)(F)_{\theta}, \theta)$.
We calculate the formal degree of $\pi_{(S, \theta, \rho)}^{\epsilon}$ defined in Section~\ref{secnonsing}. Since $\pi_{(S, \theta, \rho)}^{\epsilon}$ is constructed by twisted Yu's construction from the generic cuspidal $G$-datum
\[(\overrightarrow{G}, y, \overrightarrow{r}, \kappa_{(S, \phi_{-1}, \rho_{-1})}^{\epsilon}, \overrightarrow{\phi}),\]
Proposition~\ref{thmA} and Remark~\ref{twisted} imply that the formal degree $d(\pi_{(S, \theta, \rho)}^{\epsilon})$ of $\pi_{(S, \theta, \rho)}^{\epsilon}$ is equal to
\[\frac{\dim(\kappa_{(S, \phi_{-1}, \rho_{-1})}^{\epsilon})}{[G^{\text{a}, 0}(F)_{[y]} : G^{\text{a}, 0}(F)_{y, 0+}]} \text{exp}_{q} \left(\frac{1}{2} \dim(G^{\text{a}}) + \frac{1}{2} \dim(\mathsf{G^{\text{a}, 0}})_{[y]}^\circ) + \frac{1}{2}\sum_{i=0}^{d-1} r_i \left(\abs{R(G^{i+1}, S) - R(G^{i}, S)}\right)\right).\]
Next, we calculate $\dim(\kappa_{(S, \phi_{-1}, \rho_{-1})}^{\epsilon})$ by using the result in Section~\ref{seclem}.
We apply Lemma~\ref{keylem} for,
\begin{align*}
G & = \mathsf{G^{0}}_{[y]} (k_F) = G^0(F)_{[y]}/ G^0(F)_{y, 0+}, \\
H &= \mathsf{G^{0}}'_{[y]}(k_F) = S(F)G^0(F)_{y, 0}/G^0(F)_{y, 0+}, \\
S & = \mathsf{S}(k_F) = S(F)/S(F)_{0+},\\
N &= N(\mathsf{S}, \mathsf{G^{0}}_{[y]})(k_F)_{\phi_{-1}}, \\
N_H &= N \cap H = N(\mathsf{S}, \mathsf{G^{0}}'_{[y]})(k_F)_{\phi_{-1}}, \\
\theta &= (\phi_{-1})^{-1}.
\end{align*}
According to \cite[Corollary~2.2.2]{2019arXiv191203274K} and \cite[Proposition~2.3.3]{2019arXiv191203274K}, every element in 
\[\text{Irr}(N(\mathsf{S}, \mathsf{G^{0}}'_{[y]})(k_F)_{\phi_{-1}}, (\phi_{-1})^{-1})\]
is $1$-dimensional. 

In the proof of \cite[Theorem~2.7.7]{2019arXiv191203274K}, Kaletha defined an action $C_{\mathsf{U}}^{\mathsf{G^{0}}'_{[y]}, \epsilon}$ of $N(\mathsf{S}, \mathsf{G^{0}}_{[y]})(k_F)_{\phi_{-1}}$ on $H^{d(\mathsf{U})}_{c} (Y_{\mathsf{U}}^{\mathsf{G^{0}}'_{[y]}}, \overline{\mathbb{Q}_{\ell}})_{\phi_{-1}}$.
The action $C_{\mathsf{U}}^{\mathsf{G^{0}}'_{[y]}, \epsilon}$ dose not commute with the action of $\kappa_{(\mathsf{S}, \phi_{-1})}^{\mathsf{G^0}'_{[y]}}$, but instead translates it as
\[C_{\mathsf{U}}^{\mathsf{G^{0}}'_{[y]}, \epsilon}(n)\circ \kappa_{(\mathsf{S}, \phi_{-1})}^{\mathsf{G^0}'_{[y]}}(h)=\kappa_{(\mathsf{S}, \phi_{-1})}^{\mathsf{G^0}'_{[y]}}(nhn^{-1})\circ C_{\mathsf{U}}^{\mathsf{G^{0}}'_{[y]}, \epsilon}(n)\]
for $n\in N(\mathsf{S}, \mathsf{G^{0}}_{[y]})(k_F)_{\phi_{-1}}$ and $h\in \mathsf{G^{0}}'_{[y]}(k_F)$.
Therefore, by using $C_{\mathsf{U}}^{\mathsf{G^{0}}'_{[y]}, \epsilon}(n)$, we can extend the representation $\kappa_{(\mathsf{S}, \phi_{-1})}^{\mathsf{G^0}'_{[y]}}$ to a representation $\sigma$ of
\[H \rtimes N = \mathsf{G^{0}}'_{[y]}(k_F) \rtimes N(\mathsf{S}, \mathsf{G^{0}}_{[y]})(k_F)_{\phi_{-1}}.\]
Kaletha also defined an action
$C_{\mathsf{U}}^{\mathsf{G^{0}}_{[y]}, \epsilon}$
of $N(\mathsf{S}, \mathsf{G^{0}}_{[y]})(k_F)_{\phi_{-1}}$ on $H^{d(\mathsf{U})}_{c} (Y_{\mathsf{U}}^{\mathsf{G^{0}}_{[y]}}, \overline{\mathbb{Q}_{\ell}})_{\phi_{-1}}$, which satisfies
\[C_{\mathsf{U}}^{\mathsf{G^{0}}_{[y]}, \epsilon}(n)=\kappa_{(\mathsf{S}, \phi_{-1})}^{\mathsf{G^0}_{[y]}}(n) \circ R_{\mathsf{U}}^{\mathsf{G^{0}}_{[y]}, \epsilon}(n)\]
for $n\in N(\mathsf{S}, \mathsf{G^{0}}_{[y]})(k_F)_{\phi_{-1}} \subset \mathsf{G^{0}}_{[y]} (k_F)$.
Using this action, he extended the representation $\kappa_{(\mathsf{S}, \phi_{-1})}^{\mathsf{G^0}_{[y]}}$ to a representation $\Sigma$ of 
\[G \rtimes N = \mathsf{G^{0}}_{[y]} (k_F) \rtimes N(\mathsf{S}, \mathsf{G^{0}}_{[y]})(k_F)_{\phi_{-1}}\]
and proved that
\[\Sigma\simeq \text{Ind}_{\mathsf{G^{0}}'_{[y]}(k_F) \rtimes N(\mathsf{S}, \mathsf{G^{0}}_{[y]})(k_F)_{\phi_{-1}}}^{\mathsf{G^{0}}_{[y]} (k_F)  \rtimes N(\mathsf{S}, \mathsf{G^{0}}_{[y]})(k_F)_{\phi_{-1}}} \sigma.\]
Therefore, we obtain that the representation $\kappa_{(\mathsf{S}, \phi_{-1})}^{\mathsf{G^0}_{[y]}, \epsilon}$ is isomorphic to the representation 
\[(g, n) \mapsto \left(\text{Ind}_{\mathsf{G^{0}}'_{[y]}(k_F) \rtimes N(\mathsf{S}, \mathsf{G^{0}}_{[y]})(k_F)_{\phi_{-1}}}^{\mathsf{G^{0}}_{[y]} (k_F)  \rtimes N(\mathsf{S}, \mathsf{G^{0}}_{[y]})(k_F)_{\phi_{-1}}} \sigma\right)(gn^{-1} \rtimes n)\]
of $\mathsf{G^{0}}_{[y]} (k_F) \times N(\mathsf{S}, \mathsf{G^{0}}_{[y]})(k_F)_{\phi_{-1}}$.

Since the restriction of $R_{\mathsf{U}}^{\mathsf{G^{0}}_{[y]}, \epsilon}$ to $\mathsf{S}(k_F)$ is $(\phi_{-1})^{-1}$-isotypic, the action of $s^{-1} \rtimes s$ by $\Sigma$ is the multiple of $(\phi_{-1})^{-1}(s)$ for $s \in \mathsf{S}(k_F)$.
In particular, the action of $s^{-1}\rtimes s$ by $\sigma$ is also the multiple of $(\phi_{-1})^{-1}(s)$ for $s\in \mathsf{S}(k_F)$.

We will verify the assumptions in section~\ref{seclem};
\begin{enumerate}
\item \[\End_H(\sigma) = \bigoplus_{n \in N_{H}/S} \mathbb{C} \cdot \sigma(n^{-1} \rtimes n);\]
\item For each $g\in G$, the representation $^{g \rtimes 1}\!(\sigma\restriction_{H})$ is isomorphic to $\sigma\restriction_{H}$ if $g\in H\cdot N$, and $^{g \rtimes 1}\!(\sigma\restriction_{H})$ and $\sigma\restriction_{H}$ have no common irreducible constituents otherwise;
\item Every irreducible constituent of $\sigma\restriction_{H}$ has the same dimension.
\end{enumerate}
The first two conditions are verified in the proof of \cite[Theorem~2.7.7]{2019arXiv191203274K}. We consider the third point.
We note that $\sigma\restriction_{H}$ is isomorphic to $\kappa_{(\mathsf{S}, \phi_{-1})}^{\mathsf{G^0}'_{[y]}}$, which is obtained by endowing $\kappa_{(\mathsf{S}^\circ, \theta^\circ)}^{\mathsf{G^0}_{[y]}^\circ}$ with a structure of $\mathsf{G^0}'_{[y]}(k_F)$-representation (see Remark~\ref{rmkg'}).

Let $N(\mathsf{S}, \mathsf{G^0}_{[y]}^\circ)(k_F)_{\phi_{-1}}$ and $N(\mathsf{S}^\circ, \mathsf{G^0}_{[y]}^\circ)(k_F)_{\phi_{-1}^\circ}$ be the stabilizer of the pair $(\mathsf{S}, \phi_{-1})$ and $(\mathsf{S}^\circ, \phi_{-1}^\circ)$ in $\mathsf{G^0}_{[y]}^\circ(k_F)$ by the conjugate action respectively. For $*=N(\mathsf{S}, \mathsf{G^0}_{[y]}^\circ)(k_F)_{\phi_{-1}}$ or $N(\mathsf{S}^\circ, \mathsf{G^0}_{[y]}^\circ)(k_F)_{\phi_{-1}^\circ}$, we define $\text{Irr}(*, \phi_{-1}^{\circ})$ be the set of irreducible representations of $*$ whose restriction to $\mathsf{S}^\circ(k_F)$ is $\phi_{-1}^{\circ}$-isotypic.
According to \cite[Corollary~2.2.2]{2019arXiv191203274K} and \cite[Proposition~2.3.3]{2019arXiv191203274K}, every element in $\text{Irr}(N(\mathsf{S}, \mathsf{G^0}_{[y]}^\circ)(k_F)_{\phi_{-1}}, \phi_{-1}^{\circ})$ and $\text{Irr}(N(\mathsf{S}^\circ, \mathsf{G^0}_{[y]}^\circ)(k_F)_{\phi_{-1}^\circ}, \phi_{-1}^\circ$) are $1$-dimensional.
It is shown in the proof of \cite[Theorem~2.7.7]{2019arXiv191203274K} that the irreducible constituents of $\kappa_{(\mathsf{S}^\circ, \phi_{-1}^\circ)}^{\mathsf{G^0}_{[y]}^\circ}$ are indexed by the set $\text{Irr}(N(\mathsf{S}^\circ, \mathsf{G^0}_{[y]}^\circ)(k_F)_{\phi_{-1}^\circ}, \phi_{-1}^{\circ})$, and the irreducible constituents of $\kappa_{(\mathsf{S}, \phi_{-1})}^{\mathsf{G^0}'_{[y]}}$ are indexed by the set $\text{Irr}(N(\mathsf{S}, \mathsf{G^0}_{[y]}^\circ)(k_F)_{\phi_{-1}}, \phi_{-1}^{\circ})$. 

We now explain the relationship between the irreducible constituents of $\kappa_{(\mathsf{S}^\circ, \phi_{-1}^\circ)}^{\mathsf{G^0}_{[y]}^\circ}$ and the irreducible constituents of $\kappa_{(\mathsf{S}, \phi_{-1})}^{\mathsf{G^0}'_{[y]}}$.
Let $\rho \in \text{Irr}(N(\mathsf{S}, \mathsf{G^0}_{[y]}^\circ)(k_F)_{\phi_{-1}}, \phi_{-1}^{\circ})$ and $E_{\rho}$ be the set of the representation $\rho^{\circ}$ of $N(\mathsf{S}^\circ, \mathsf{G^0}_{[y]}^\circ)(k_F)_{\phi_{-1}^\circ}$ whose restriction to $N(\mathsf{S}, \mathsf{G^0}_{[y]}^\circ)(k_F)_{\phi_{-1}}$ is equal to $\rho$.
\begin{lemma}
The cardinality of the set $E_{\rho}$ is equal to 
\[\abs{N(\mathsf{S}^\circ, \mathsf{G^0}_{[y]}^\circ)(k_F)_{\phi_{-1}^\circ}/N(\mathsf{S}, \mathsf{G^0}_{[y]}^\circ)(k_F)_{\phi_{-1}}}.\]
In particular, it is independent of $\rho$.
\end{lemma}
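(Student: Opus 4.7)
Write $T := \mathsf{S}^\circ(k_F)$, $N_1 := N(\mathsf{S}, \mathsf{G^0}_{[y]}^\circ)(k_F)_{\phi_{-1}}$ and $N_2 := N(\mathsf{S}^\circ, \mathsf{G^0}_{[y]}^\circ)(k_F)_{\phi_{-1}^\circ}$. First I would observe that $T \trianglelefteq N_1 \subset N_2$: any element of $\mathsf{G^0}_{[y]}^\circ(k_F)$ normalizing $\mathsf{S}$ also normalizes its identity component $\mathsf{S}^\circ$, and fixing $\phi_{-1}$ implies fixing $\phi_{-1}^\circ = \phi_{-1}|_{\mathsf{S}^\circ}$; while $T$ is abelian and normalized by $N_i$ by construction.

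The crucial reduction is to show that $N_i/T$ is abelian for $i=1,2$, using the cited hypothesis that every element of $\Irr(N_i,\phi_{-1}^\circ)$ is one-dimensional. Fix any $\tilde\phi \in \Irr(N_i,\phi_{-1}^\circ)$, which exists by Frobenius reciprocity applied to $\text{Ind}_T^{N_i}\phi_{-1}^\circ$. For any other $\tilde\phi' \in \Irr(N_i,\phi_{-1}^\circ)$, the character $\tilde\phi' \otimes \tilde\phi^{-1}$ is trivial on $T$ and thus lies in $\widehat{N_i/T}$, so $\chi \mapsto \tilde\phi \otimes \chi$ is a bijection $\widehat{N_i/T} \leftrightarrow \Irr(N_i,\phi_{-1}^\circ)$. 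On the other hand, Frobenius reciprocity gives
\[\abs{\Irr(N_i,\phi_{-1}^\circ)} = \dim\text{Ind}_T^{N_i}\phi_{-1}^\circ = [N_i : T],\]
since each constituent appears with multiplicity one (being one-dimensional with $T$-restriction $\phi_{-1}^\circ$). Hence $\abs{\widehat{N_i/T}} = \abs{N_i/T}$, which forces $N_i/T$ to equal its abelianization. Consequently $N_1 \trianglelefteq N_2$ and $N_2/N_1 \cong (N_2/T)/(N_1/T)$ is abelian.

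Finally, I would realize $E_\rho$ as a fiber of a map of torsors. Via tensoring by characters trivial on $T$, the set $\Irr(N_i,\phi_{-1}^\circ)$ is a torsor under $\widehat{N_i/T}$, and the restriction map
\[\Irr(N_2,\phi_{-1}^\circ) \longrightarrow \Irr(N_1,\phi_{-1}^\circ)\]
is equivariant over the restriction homomorphism $\widehat{N_2/T} \to \widehat{N_1/T}$. By Pontryagin duality applied to the inclusion $N_1/T \hookrightarrow N_2/T$ of finite abelian groups, this last homomorphism is surjective with kernel $\widehat{N_2/N_1}$. Therefore the restriction on $\Irr$ is surjective with fibers of constant cardinality $\abs{\widehat{N_2/N_1}} = \abs{N_2/N_1}$, and the fiber over $\rho$ is precisely $E_\rho$, which settles both the claimed cardinality and its independence of $\rho$.

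The main technical step is the abelianness of $N_i/T$; the rest of the argument is formal character theory of finite abelian groups, once that point is secured.
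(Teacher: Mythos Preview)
Your proof is correct and follows essentially the same torsor argument as the paper: both show that $E_\rho$ is a torsor under $\widehat{N_2/N_1}$ of cardinality $\abs{N_2/N_1}$. The differences are cosmetic. The paper cites \cite[Corollary~2.2.2]{2019arXiv191203274K} directly for the abelianness of $N_2/\mathsf{S}^\circ(k_F)$ and then invokes \cite[Proposition~2.3.3]{2019arXiv191203274K} a second time to exhibit an explicit element of $E_\rho$, whereas you extract abelianness of $N_i/T$ from the already-stated $1$-dimensionality of $\Irr(N_i,\phi_{-1}^\circ)$ and obtain nonemptiness of every fiber in one stroke via Pontryagin duality of the inclusion $N_1/T \hookrightarrow N_2/T$. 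Your route is slightly more self-contained (it does not re-cite the external references inside the proof), but since the $1$-dimensionality itself was obtained from those same two references, the logical content is identical.
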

\begin{proof}
Since $E_{\rho}$ is an $N(\mathsf{S}^\circ, \mathsf{G^0}_{[y]}^\circ)(k_F)_{\phi_{-1}^\circ}/N(\mathsf{S}, \mathsf{G^0}_{[y]}^\circ)(k_F)_{\phi_{-1}}$-torsor, it is enough to show that $E_{\rho}$ is not empty.
According to \cite[Proposition~2.3.3]{2019arXiv191203274K}, there exists an extension $\rho^{\circ}$ of $\phi_{-1}$ to $N(\mathsf{S}^\circ, \mathsf{G^0}_{[y]}^\circ)(k_F)_{\phi_{-1}^\circ}$. Then, 
\[\rho \cdot (\rho^{\circ}\restriction_{N(\mathsf{S}, \mathsf{G^0}_{[y]}^\circ)(k_F)_{\phi_{-1}}})^{-1}\]
is a character of $N(\mathsf{S}, \mathsf{G^0}_{[y]}^\circ)(k_F)_{\phi_{-1}}/\mathsf{S}(k_F)$.
Since $N(\mathsf{S}^\circ, \mathsf{G^0}_{[y]}^\circ)(k_F)_{\phi_{-1}^\circ}/\mathsf{S}(k_F)$ is abelian \cite[Corollary~2.2.2]{2019arXiv191203274K}, we can extend 
\[\rho \cdot (\rho^{\circ}\restriction_{N(\mathsf{S}, \mathsf{G^0}_{[y]}^\circ)(k_F)_{\phi_{-1}}})^{-1}\]
to a character $\chi$ of $N(\mathsf{S}^\circ, \mathsf{G^0}_{[y]}^\circ)(k_F)_{\phi_{-1}^\circ}/\mathsf{S}(k_F)$.
Then, $\rho^{\circ} \cdot \chi$ is an element of $E_{\rho}$.
\end{proof}
In the proof of \cite[Theorem~2.7.7]{2019arXiv191203274K}, Kaletha proved that the irreducible constituent of $\kappa_{(\mathsf{S}, \phi_{-1})}^{\mathsf{G^0}'_{[y]}}$ corresponding to $\rho$ is the direct sum of the irreducible constituents of $\kappa_{(\mathsf{S}^\circ, \phi_{-1}^\circ)}^{\mathsf{G^0}_{[y]}^\circ}$ corresponding to $\rho^{\circ} \in E_{\rho}$.

On the other hand, According to \cite[2.3]{2019arXiv191203274K}, the conjugate action of the $k_F$-point of the adjoint group of $\mathsf{G^0}_{[y]}^\circ$ on the set of irreducible constituents of $\kappa_{(\mathsf{S}^\circ, \theta^\circ)}^{\mathsf{G^0}_{[y]}^\circ}$ is transitive. In particular, every irreducible constituent of $\kappa_{(\mathsf{S}^\circ, \theta^\circ)}^{\mathsf{G^0}_{[y]}^\circ}$ has the same dimension. Let $m$ denote the dimension of irreducible constituents of $\kappa_{(\mathsf{S}^\circ, \theta^\circ)}^{\mathsf{G^0}_{[y]}^\circ}$.
Then, the argument above implies that the dimensions of irreducible constituents of $\kappa_{(\mathsf{S}, \phi_{-1})}^{\mathsf{G^0}'_{[y]}}$ are equal to
\[m \cdot \abs{N(\mathsf{S}^\circ, \mathsf{G^0}_{[y]}^\circ)(k_F)_{\phi_{-1}^\circ}/N(\mathsf{S}, \mathsf{G^0}_{[y]}^\circ)(k_F)_{\phi_{-1}}}.\]
In particular, the third assumption is verified.

Now, Lemma~\ref{keylem} implies the following theorem, which is a generalization of \cite[Corollary~52]{2021arXiv210100658S}.
\begin{theorem}
\label{keythm}
The formal degree $d(\pi_{(S, \theta, \rho)}^{\epsilon})$ of $\pi_{(S, \theta, \rho)}^{\epsilon}$ is equal to
\[\frac{\dim(\rho)\cdot\exp_{q} \left(
\frac{1}{2} \dim(G^{\Aa}) +
\frac{1}{2}\rank((\mathsf{G^{\Aa, 0}})_{[y]}^\circ) +
\frac{1}{2}\sum_{i=0}^{d-1} r_i \left(\abs{R(G^{i+1}, S) - R(G^{i}, S)}\right)
\right)}{\abs{N(S, G)(F)_{\theta}/S(F)}\abs{S^{\Aa}(F)/S^{\Aa}(F)_{0+}}}
.
\] 
\end{theorem}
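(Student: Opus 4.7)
The plan is to start from Proposition~\ref{thmA} (applied via Remark~\ref{twisted} to the twisted Yu construction), which expresses $d(\pi_{(S,\theta,\rho)}^{\epsilon})$ as a ratio whose numerator features $\dim(\kappa_{(S,\phi_{-1},\rho_{-1})}^{\epsilon})$ and whose $\exp_{q}$ factor contains $\tfrac{1}{2}\dim((\mathsf{G^{\Aa,0}})_{[y]}^{\circ})$. To compute this dimension I would apply Lemma~\ref{keylem} to the groups $G,H,S,N,N_{H}$ and representation $\sigma$ set up earlier in this section; its three hypotheses are already verified above, conditions (1) and (2) coming from the proof of \cite[Theorem~2.7.7]{2019arXiv191203274K}, and condition (3) from the preceding computation of dimensions of irreducible constituents of $\kappa_{(\mathsf{S},\phi_{-1})}^{\mathsf{G^{0}}'_{[y]}}$. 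Combined with Lemma~\ref{irrbij} (identifying $|N/S|=|N(S,G)(F)_{\theta}/S(F)|$) and Remark~\ref{rmkg'} (identifying $\dim(\sigma)=\dim(\kappa_{(\mathsf{S}^{\circ},\phi_{-1}^{\circ})}^{\mathsf{G^{0}}_{[y]}^{\circ}})$), this yields
\[
\dim(\kappa_{(S,\phi_{-1},\rho_{-1})}^{\epsilon}) = \dim(\rho)\cdot\frac{[G^{0}(F)_{[y]}:S(F)G^{0}(F)_{y,0}]\cdot\dim(\kappa_{(\mathsf{S}^{\circ},\phi_{-1}^{\circ})}^{\mathsf{G^{0}}_{[y]}^{\circ}})}{|N(S,G)(F)_{\theta}/S(F)|}.
\]

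Next, the Deligne--Lusztig dimension formula gives $\dim(\kappa_{(\mathsf{S}^{\circ},\phi_{-1}^{\circ})}^{\mathsf{G^{0}}_{[y]}^{\circ}}) = |\mathsf{G^{0}}_{[y]}^{\circ}(k_{F})|/(|\mathsf{G^{0}}_{[y]}^{\circ}(k_{F})|_{p}\cdot|\mathsf{S}^{\circ}(k_{F})|)$, with the Steinberg formula $|\mathsf{G^{0}}_{[y]}^{\circ}(k_{F})|_{p}=q^{(\dim-\rank)/2}$. Since $\mathsf{G^{0}}_{[y]}^{\circ}$ and $(\mathsf{G^{\Aa,0}})_{[y]}^{\circ}$ differ by the split central torus coming from $A$, the quantity $\dim-\rank$ is the same for both, so this $p$-part equals $q^{(\dim((\mathsf{G^{\Aa,0}})_{[y]}^{\circ})-\rank((\mathsf{G^{\Aa,0}})_{[y]}^{\circ}))/2}$---precisely the factor that converts $\tfrac{1}{2}\dim((\mathsf{G^{\Aa,0}})_{[y]}^{\circ})$ in the exponent of Proposition~\ref{thmA} into $\tfrac{1}{2}\rank((\mathsf{G^{\Aa,0}})_{[y]}^{\circ})$ in the theorem, once absorbed into the denominator.

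The main obstacle is the remaining purely group-theoretic identity: after substitution and cancellation, matching the denominator of the theorem reduces to $[G^{0}(F)_{[y]}:S(F)G^{0}(F)_{y,0+}]\cdot[S^{\Aa}(F):S^{\Aa}(F)_{0+}]=[G^{\Aa,0}(F)_{[y]}:G^{\Aa,0}(F)_{y,0+}]$. To prove this I would use $S(F)\cap G^{0}(F)_{y,0}=S(F)_{0}$ to rewrite the intermediate indices on the left, and exploit that $A$ is $F$-split (so $H^{1}(F,A)=1$) together with functoriality of Moy--Prasad filtrations along the central isogeny $G^{0}\to G^{\Aa,0}$ to obtain $[G^{\Aa,0}(F)_{[y]}:G^{\Aa,0}(F)_{y,0+}]=[G^{0}(F)_{[y]}:A(F)G^{0}(F)_{y,0+}]$ and $[S^{\Aa}(F):S^{\Aa}(F)_{0+}]=[S(F):A(F)S(F)_{0+}]$. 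The identity then reduces to $[S(F)G^{0}(F)_{y,0+}:A(F)G^{0}(F)_{y,0+}]=[S(F):A(F)S(F)_{0+}]$, which follows from the second isomorphism theorem together with the modular-law identity $S(F)\cap A(F)G^{0}(F)_{y,0+}=A(F)\cdot S(F)_{0+}$ (valid because $A\subset S$).
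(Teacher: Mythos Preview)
Your proposal is correct and follows essentially the same route as the paper's own proof: start from Proposition~\ref{thmA} (via Remark~\ref{twisted}), apply Lemma~\ref{keylem} to the prepared $(G,H,S,N,\sigma)$, insert the Deligne--Lusztig dimension formula, and reduce to an index identity. The paper does exactly this, citing \cite[Lemma~3.2.2, Lemma~3.4.5]{2019arXiv191203274K} for the identification $|N/S|=|N(S,G)(F)_{\theta}/S(F)|$ (your Lemma~\ref{irrbij} covers only the passage to $N(S,G^{0})(F)_{\phi_{-1}}/S(F)$; you still need \cite[Lemma~3.4.5]{2019arXiv191203274K} to get from $G^{0}$ to $G$), and then simply writes ``the claim follows from a computation'' where you spell out the index manipulation explicitly.
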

\begin{proof}
Recall that the formal degree $d(\pi_{(S, \theta, \rho)}^{\epsilon})$ of $\pi_{(S, \theta, \rho)}^{\epsilon}$ is equal to
\[\frac{\dim(\kappa_{(S, \phi_{-1}, \rho_{-1})}^{\epsilon})}{[G^{\text{a}, 0}(F)_{[y]} : G^{\text{a}, 0}(F)_{y, 0+}]} \exp_{q} \left(\frac{1}{2} \dim(G^{\text{a}}) + \frac{1}{2} \dim((\mathsf{G^{\text{a}, 0}})_{[y]}^\circ) + \frac{1}{2}\sum_{i=0}^{d-1} r_i \left(\abs{R(G^{i+1}, S) - R(G^{i}, S)}\right)\right).\]  
According to Lemma~\ref{keylem}, the dimension of $\kappa_{(S, \phi_{-1}, \rho_{-1})}^{\epsilon}$ is equal to
\[\frac{\dim(\rho)}{\abs{N(\mathsf{S}, \mathsf{G}^{0}_{[y]})(k_F)_{\phi_{-1}}/\mathsf{S}(k_F)}} \abs{\mathsf{G^{0}}_{[y]}(k_F)/\mathsf{G^{0}}'_{[y]}(k_F)} \dim(\kappa_{(\mathsf{S}^\circ, \theta^\circ)}^{\mathsf{G^0}_{[y]}^\circ}).\]
According to \cite[Lemma~3.2.2]{2019arXiv191203274K} and \cite[Lemma~3.4.5]{2019arXiv191203274K}, we obtain the isomorphism
\begin{align*}
N(\mathsf{S}, \mathsf{G}^{0}_{[y]})(k_F)_{\phi_{-1}}/\mathsf{S}(k_F)
& \simeq N(S, G^0)(F)_{\phi_{-1}}/S(F)\\
& \simeq N(S, G)(F)_{\theta}/S(F).
\end{align*}
Moreover, \cite[Corollary~6.4.3]{10013123614} and \cite[Corollary~7.2]{MR393266} imply that the dimension $\dim(\kappa_{(\mathsf{S}^\circ, \theta^\circ)}^{\mathsf{G^0}_{[y]}^\circ})$ of $\kappa_{(\mathsf{S}^\circ, \theta^\circ)}^{\mathsf{G^0}_{[y]}^\circ}$ is equal to
\[\frac{[\mathsf{G^0}_{[y]}^\circ(k_F): \mathsf{S}^\circ(k_F)]}{\text{exp}_q \left(\frac{1}{2} \left(\dim(\mathsf{G^0}_{[y]}^\circ) - \dim(\mathsf{S}^\circ)\right)\right)}.\]
Therefore, we obtain
\begin{align*}
&\abs{\mathsf{G^{0}}_{[y]}(k_F)/\mathsf{G^{0}}'_{[y]}(k_F)} \dim(\kappa_{(\mathsf{S}^\circ, \theta^\circ)}^{\mathsf{G^0}_{[y]}^\circ})\\ 
= &[G^0(F)_{[y]} : S(F)G^0(F)_{y,0}] \cdot \frac{[\mathsf{G^0}_{[y]}^\circ(k_F): \mathsf{S}^\circ(k_F)]}{\text{exp}_q \left(\frac{1}{2} \left(\dim(\mathsf{G^0}_{[y]}^\circ) - \dim(\mathsf{S}^\circ)\right)\right)}\\
= &[G^0(F)_{[y]} : S(F)G^0(F)_{y,0}] \cdot \frac{[G^0(F)_{y, 0}: S(F)_{0}G^0(F)_{y, 0+}]}{\text{exp}_q \left(\frac{1}{2} \left(\dim(\mathsf{G^0}_{[y]}^\circ) - \dim(\mathsf{S}^\circ)\right)\right)}\\
= &[G^{\text{a},0}(F)_{[y]} : S^{\text{a}}(F)G^{\text{a}, 0}(F)_{y, 0}] \cdot \frac{[G^{\text{a}, 0}(F)_{y, 0}: S^{\text{a}}(F)_{0}G^{\text{a}, 0}(F)_{y, 0+}]}{\text{exp}_q \left(\frac{1}{2} \left(\dim((\mathsf{G^{\text{a}, 0}})_{[y]}^\circ) - \rank((\mathsf{G^{\text{a}, 0}})_{[y]}^\circ)
\right)\right)}.\\
\end{align*}
Then, the claim follows from a computation.
\end{proof}
\section{Comparison}
\label{seccomp}
In this section, we prove that Kaletha's construction of the local Langlands correspondence \cite{2019arXiv191203274K} for torally wild $L$-parameters satisfies Conjecture~\ref{fdc}.

Let $\varphi$ be a torally wild $L$-parameter which corresponds to the torally wild $L$-packet datum $(S, \widehat{j}, \chi_0, \theta)$.
Conjugating $\varphi$ in $\widehat{G}$ if necessary, we may assume that the image $\widehat{T}$ of $\widehat{j}\colon \widehat{S} \to \widehat{G}$ is $\Gamma_F$-stable.
As explained in \cite[4.2]{2019arXiv191203274K}, we may assume that the $\chi$-data for $R(G, S)$ obtained from $\theta$ as in \cite[3.5]{2019arXiv191203274K} is equal to the $\chi$-data for $R(G, S)$ obtained from $\chi_0$ as in Section~\ref{seclpara}. 
We need not to concern ourselves with the precise way to obtain the $\chi$-data $(\chi_{\alpha})_{\alpha}$ for $R(G, S)$ from $\theta$, but we only need to know that each $\chi_{\alpha}$ is tamely ramified of finite order. 
In particular, the hypothesis of \cite[Corollary~67]{2021arXiv210100658S} is satisfied, hence \cite[Corollary~67]{2021arXiv210100658S} also holds in our situation.

Let $j\colon S \to G$ be an admissible embedding defined over $F$.
By using $j$, we regard $A$ as a subgroup of $S$, and let $S^{\text{a}}$ denote the torus $S/A$.

Assume that $\pi \in [\pi_{(jS, \theta_j)}]$ corresponds to $\rho \in \text{Irr}(\pi_0(S_{\varphi}^{+}), \eta)$ via the bijection~\eqref{bijc7}.
We write $\pi=\pi_{(jS, \theta_j, \rho_j)}^{\epsilon}$ for some $\rho_{j} \in \text{Irr}(N(jS, G)(F)_{\theta_j}, \theta_j)$. By the construction of the bijection~\eqref{bijc7}, we obtain 
\[\dim(\rho_j)= \frac{\dim(\rho)}{\abs{\pi_0(S_{\varphi}^{+})/\pi_0(S_{\varphi}^{+})_{\eta}}}.\]
On the other hand, Theorem~\ref{keythm} implies that the formal degree of $\pi_{(jS, \theta_j, \rho_j)}^{\epsilon}$ is equal to
\[\frac{\dim(\rho_j)\cdot\exp_{q} \left(
\frac{1}{2} \dim(G^{\text{a}}) +
\frac{1}{2}\text{rank}((\mathsf{G^{\text{a}, 0}})_{[y]}^\circ) +
\frac{1}{2}\sum_{i=0}^{d-1} r_i \left(\abs{R(G^{i+1}, jS) - R(G^{i}, jS)}\right)
\right)}{\abs{N(jS, G)(F)_{\theta_j}/jS(F)}\abs{S^{\text{a}}(F)/S^{\text{a}}(F)_{0+}}}.
\]
Therefore, we conclude that the formal degree of $\pi_{(jS, \theta_j, \rho_j)^{\epsilon}}$ is equal to
\[\frac{\dim(\rho)\cdot\exp_{q} \left(
\frac{1}{2} \dim(G^{\text{a}}) +
\frac{1}{2}\text{rank}((\mathsf{G^{\text{a}, 0}})_{[y]}^\circ) +
\frac{1}{2}\sum_{i=0}^{d-1} r_i \left(\abs{R(G^{i+1}, jS) - R(G^{i}, jS)}\right)
\right)}
{\abs{\pi_0(S_{\varphi}^{+})/\pi_0(S_{\varphi}^{+})_{\eta}}\abs{N(jS, G)(F)_{\theta_j}/jS(F)}\abs{S^{\text{a}}(F)/S^{\text{a}}(F)_{0+}}}.
\]

Next, we calculate the $\gamma$-factor.
We apply the calculation of $\abs{\gamma(0, \pi, \text{Ad}, \psi)}$ in \cite[Section~4]{2021arXiv210100658S} for our situation.
Recall that $\varphi={^{L}\!j}\circ \varphi_{S}$, where $\varphi_{S}\colon W_F \to {^{L}S}$ is the $L$-parameter for the character $\theta$, and $^{L}\!j\colon {^{L}S} \to {^{L}G}$ is an extension of $\widehat{j}$ defined by using the $\chi$-data for $R(S, G)$ obtained from $\chi_0$.
Combining with the adjoint representation $\text{Ad}$ of ${^{L}G}$ on $\widehat{\mathfrak{g}}/\widehat{\mathfrak{z}}^{\Gamma_{F}}$, we define the representation $\text{Ad}\circ \varphi$ of $W_F$. Here, $\widehat{\mathfrak{g}}$ and $\widehat{\mathfrak{z}}$ are the Lie algebras of $\widehat{G}$ and its center respectively. We calculate the absolute value $\abs{\gamma(0,\text{Ad}\circ \varphi)}$ of the $\gamma$-factor.

As explained in \cite[4.3]{2021arXiv210100658S}, the representation $\text{Ad}\circ \varphi$ decomposes as a direct sum
\[\widehat{\mathfrak{g}}/\widehat{\mathfrak{z}}^{\Gamma_{F}}=: V = V_{\text{toral}} \oplus V_{\text{root}},\]
where $V_{\text{toral}}=\widehat{\mathfrak{t}}/\widehat{\mathfrak{z}}^{\Gamma_{F}}$, and
\[V_{\text{root}}= \bigoplus_{\alpha \in R(G, S)} \widehat{\mathfrak{g}}_{\alpha}.\]
Here, $\widehat{\mathfrak{t}}$ denotes the Lie algebra of $\widehat{T}$, and $\widehat{\mathfrak{g}}_{\alpha}$ is the usual $\check{\alpha}$-eigenspace for $\widehat{T}$, where $\check{\alpha}$ is interpreted as a root of $(\widehat{G}, \widehat{T})$ via the map $\widehat{j}\colon \widehat{S} \to \widehat{T}$.
Now, we explain the calculation of $\abs{\gamma(0, V_{\text{toral}})}$ and $\abs{\gamma(0, V_{\text{root}})}$ \cite[Section~4]{2021arXiv210100658S}.

First, we explain the calculation of $\abs{\gamma(0, V_{\text{toral}})}$.
As explained in \cite[4.4]{2021arXiv210100658S}, 
\[V_{\text{toral}} \simeq \mathbb{C}\otimes X^{*}(S^{\text{a}})\] 
as representations of $W_F$.
Let $M=X^{*}(S^{\text{a}})^{I_F}$ and write $M^{\vee}$ for the dual lattice of $M$.
Then, $\abs{\gamma(0, V_{\text{toral}})}$ is calculated as follows.
\begin{lemma}[{\cite[Lemma~69]{2021arXiv210100658S}}]
\label{toral}
The absolute value $\abs{\gamma(0, V_{\toral})}$ is equal to
\[\exp_{q}\left(\frac{1}{2}\left(
\dim(S^{\Aa})+ \rank(M)
\right)\right)
\frac{\abs{M_{\Frob}}}
{\abs{(\overline{k_F}^{\times}\otimes M^{\vee})^{\Frob}}}
.\]
\end{lemma}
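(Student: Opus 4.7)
The plan is to expand the $\gamma$-factor via the functional equation
\[\gamma(0, V_{\toral}, \psi) = \epsilon(0, V_{\toral}, \psi) \cdot \frac{L(1, V_{\toral}^{\vee})}{L(0, V_{\toral})}\]
and compute the absolute value of each of the three factors separately. Since $S^{\Aa}$ splits over a tamely ramified extension of $F$, the representation $V_{\toral} \simeq \mathbb{C}\otimes X^{*}(S^{\Aa})$ is tame, so $V_{\toral}^{I_F} \simeq \mathbb{C}\otimes M$ with the Frobenius action inherited from $M$, and every quantity below will be controlled by the $\mathbb{Z}[\Frob]$-module $M$.

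For the $\epsilon$-factor I would invoke the standard identity $\abs{\epsilon(0, V, \psi)} = \exp_{q}(a(V)/2)$, valid for any tame $W_F$-representation $V$ paired with a level-zero additive character $\psi$. Here the Artin conductor reduces to $a(V_{\toral}) = \dim V_{\toral} - \dim V_{\toral}^{I_F} = \dim(S^{\Aa}) - \rank(M)$, contributing the factor $\exp_{q}\!\bigl(\tfrac{1}{2}(\dim(S^{\Aa}) - \rank(M))\bigr)$.

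For the $L$-factors I would proceed as follows. By definition $\abs{L(0, V_{\toral})}^{-1} = \abs{\det(1 - \Frob \mid \mathbb{C}\otimes M)}$; the ellipticity of $(S, \theta)$ forces $X^{*}(S^{\Aa})^{\Gamma_F} = 0$ and hence $M^{\Frob} = 0$, so this determinant is nonzero and equals $\abs{M_{\Frob}}$. For $L(1, V_{\toral}^{\vee})$, I would first identify $(V_{\toral}^{\vee})^{I_F}$ as the $\mathbb{C}$-linear dual of $V_{\toral}^{I_F}$, so that $\Frob$ acts there with inverse eigenvalues. Then I would introduce the $k_F$-torus $T$ with $X^{*}(T) = M$, equivalently $X_{*}(T) = M^{\vee}$, and Frobenius induced from the action on $M$: the identification $T(\overline{k_F}) = \overline{k_F}^{\times}\otimes M^{\vee}$ gives $T(k_F) = (\overline{k_F}^{\times}\otimes M^{\vee})^{\Frob}$, and Weil's formula $\abs{T(k_F)} = \abs{P_T(q)}$, with $P_T$ the characteristic polynomial of $\Frob$ on $X^{*}(T) = M$, yields $\abs{L(1, V_{\toral}^{\vee})}^{-1} = \exp_{q}(-\rank M) \cdot \abs{T(k_F)}$.

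Multiplying these three pieces and collecting the resulting powers of $q$ should give the stated formula, since $\tfrac{1}{2}(\dim(S^{\Aa}) - \rank(M)) + \rank(M) = \tfrac{1}{2}(\dim(S^{\Aa}) + \rank(M))$. The main obstacle will be the final matching between $\abs{L(1, V_{\toral}^{\vee})}$ and $\abs{T(k_F)}$: one must use that $\Frob$ acts through a finite quotient on $M$ (so its eigenvalues are roots of unity and the characteristic polynomial is palindromic up to sign) in order to identify the multisets of eigenvalues on $V_{\toral}^{I_F}$ and on $(V_{\toral}^{\vee})^{I_F}$. This is where the bookkeeping between the representation-theoretic inverse eigenvalues and the torus-theoretic cocharacter lattice $M^{\vee}$ must be set up carefully.
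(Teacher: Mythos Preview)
Your argument is correct: the functional equation splits $|\gamma(0,V_{\toral})|$ into the $\epsilon$-factor and the ratio of $L$-values, the tameness of $V_{\toral}$ gives $a(V_{\toral})=\dim(S^{\Aa})-\rank(M)$, the ellipticity of $S^{\Aa}$ forces $M^{\Frob}=0$ so that $|L(0,V_{\toral})|^{-1}=|M_{\Frob}|$, and the identification of $|L(1,V_{\toral}^{\vee})|^{-1}$ with $q^{-\rank(M)}|(\overline{k_F}^{\times}\otimes M^{\vee})^{\Frob}|$ goes through exactly as you outline, the palindromy of the characteristic polynomial being the right justification for matching the eigenvalues of $\Frob$ on $(\mathbb{C}\otimes M)^{\vee}$ with those on $\mathbb{C}\otimes M$.

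By way of comparison, the paper does not give its own proof at all: it simply quotes the statement from Schwein \cite[Lemma~69]{2021arXiv210100658S} and adds the remark that Schwein's standing regularity hypothesis on $\varphi$ plays no role in this particular computation, so the lemma carries over verbatim to the non-singular setting. Your write-up is therefore a reconstruction of (what must be) Schwein's argument rather than an alternative to anything in the present paper. One small point worth making explicit in your version is why $S^{\Aa}$ is anisotropic (hence $M^{\Frob}=0$): this follows from the ellipticity of $S$ together with $A$ being the maximal split central torus, and the paper uses it implicitly elsewhere (e.g.\ in the finiteness of $S^{\Aa}(F)/S^{\Aa}(F)_{0+}$).
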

\begin{remark}
In \cite[Section~4]{2021arXiv210100658S}, Schwein assumes that the $L$-parameter $\varphi$ is regular in the sense of \cite[Definition~5.2.3]{MR4013740}.
However, the argument in \cite[Lemma~69]{2021arXiv210100658S} dose not require the regularity condition. Hence, \cite[Lemma~69]{2021arXiv210100658S} also holds in our situation.
\end{remark}

Next, we explain the calculation of $\abs{\gamma(0, V_{\text{root}})}$ \cite[4.5]{2021arXiv210100658S}.
Let $\underline{R}(G, S)$ be the set of $\Gamma_F$-orbits in $R(G, S)$.
The root summand $V_{\text{root}}$ decomposes as a direct sum
\[V_{\text{root}}=\bigoplus_{\underline{\alpha} \in \underline{R}(G, S)} V_{\underline{\alpha}},\]
where
\[V_{\underline{\alpha}}=\bigoplus_{\alpha\in \underline{\alpha}} V_{\alpha}.\]
Moreover, by choosing $\alpha\in \underline{\alpha}$, we can identify $V_{\underline{\alpha}}$ with the induced representations $\text{Ind}_{W_{\alpha}}^{W_F} \psi_{\alpha}$, where $W_{\alpha}$ denotes the stabilizer of $\alpha$ in $W_F$, and $\psi_{\alpha}$ denotes the character of $W_{\alpha}$ defined in \cite[4.5]{2021arXiv210100658S}. 
Schwein calculated the depths of characters $\psi_{\alpha}$ \cite[Corollary~71, Lemma~74]{2021arXiv210100658S} to obtain the value of $\abs{\gamma(0, V_{\text{root}})}$.
We will follow his arguments in our situation.

For $\alpha \in R(G, S)$, let $\Gamma_{\alpha}$ be the stabilizer of $\alpha$ in $\Gamma_{F}$ and $F_{\alpha}$ be the corresponding fixed subfield of $F^{\text{sep}}$.
We consider the character 
\[\theta \circ N_{F_{\alpha}/F} \circ \check{\alpha}\colon F_{\alpha}^{\times} \to \mathbb{C}^{\times}, \]
where $N_{F_{\alpha}/F}$ denotes the norm map $S(F_{\alpha}) \to S(F)$.
Recall that the depth of $\theta \circ N_{F_{\alpha}/F} \circ \check{\alpha}$ is defined as
\[\inf\{r\in \mathbb{R}_{> 0}\mid \left(\theta \circ N_{F_{\alpha}/F} \circ \check{\alpha}\right)\left((F_{\alpha})_{r}^{\times}\right)=1\}.\]
\begin{lemma}[{\cite[Corollary~71]{2021arXiv210100658S}}]
\label{pos}
If the depth of $\theta \circ N_{F_{\alpha}/F} \circ \check{\alpha}$ is positive, then it is equal to the depth of $\psi_{\alpha}$. 
\end{lemma}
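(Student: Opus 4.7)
My plan is to identify the character $\psi_{\alpha}$ explicitly, via local class field theory, as a product of $\theta \circ N_{F_{\alpha}/F} \circ \check{\alpha}$ with a tamely ramified character, and then conclude by a direct depth calculation.

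First, I would unwind the definition of $\psi_{\alpha}$ coming from the Langlands--Shelstad construction of $^{L}\!j$ from the $\chi$-data. Restricting the summand $V_{\underline{\alpha}} \simeq \text{Ind}_{W_{\alpha}}^{W_F} \psi_{\alpha}$ to $W_{\alpha}$ and identifying $W_{\alpha}^{\text{ab}}$ with $F_{\alpha}^{\times}$ by local class field theory, the character $\psi_{\alpha}$ of $F_{\alpha}^{\times}$ is exactly
\[
\psi_{\alpha} \;=\; (\theta \circ N_{F_{\alpha}/F} \circ \check{\alpha}) \cdot \chi_{\alpha}.
\]
This factorization is recorded in \cite[4.5]{2021arXiv210100658S} and depends only on the construction of $^{L}\!j$ from the $\chi$-data, not on any regularity hypothesis on $\theta$.

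Second, I would use the fact, noted at the beginning of Section~\ref{seccomp}, that each $\chi_{\alpha}$ is tamely ramified of finite order. Thus $\chi_{\alpha}$ is trivial on $(F_{\alpha})_{0+}^{\times}$, so it has depth $0$. Write $r > 0$ for the depth of $\theta \circ N_{F_{\alpha}/F} \circ \check{\alpha}$, so this character is trivial on $(F_{\alpha})_{r+}^{\times}$ but not on $(F_{\alpha})_{r}^{\times}$.

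Finally, since $r > 0$ we have $(F_{\alpha})_{r}^{\times} \subset (F_{\alpha})_{0+}^{\times}$, and similarly with $r+$ in place of $r$, so $\chi_{\alpha}$ is trivial on both of these subgroups. Therefore $\psi_{\alpha}$ is trivial on $(F_{\alpha})_{r+}^{\times}$, while on $(F_{\alpha})_{r}^{\times}$ the factor $\theta \circ N_{F_{\alpha}/F} \circ \check{\alpha}$ is non-trivial and $\chi_{\alpha}$ is trivial, forcing $\psi_{\alpha}$ to be non-trivial. This yields $\text{depth}(\psi_{\alpha}) = r$.

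The only subtle step is the first one: verifying the factorization of $\psi_{\alpha}$. This is essentially a bookkeeping exercise with the definition of $^{L}\!j$ via $\chi$-data, and Schwein's argument for the regular case applies verbatim since it only uses the tame ramification of the $\chi$-data, which is part of our hypotheses.
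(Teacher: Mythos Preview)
Your proposal is correct and takes essentially the same approach as the paper. The paper's proof simply notes that \cite[Corollary~67]{2021arXiv210100658S} holds in this setting (because each $\chi_{\alpha}$ is tamely ramified of finite order) and then defers to Schwein's argument for \cite[Corollary~71]{2021arXiv210100658S}; you have unpacked that argument explicitly---the factorization $\psi_{\alpha} = (\theta \circ N_{F_{\alpha}/F} \circ \check{\alpha}) \cdot \chi_{\alpha}$ together with the depth-zero property of $\chi_{\alpha}$---which is exactly what underlies Schwein's proof.
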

\begin{proof}
We recall that \cite[Corollary~67]{2021arXiv210100658S} also holds in our situation.
Then, the claim follows from the same argument as \cite[Corollary~71]{2021arXiv210100658S}.
\end{proof}

The case where the depth of $\theta \circ N_{F_{\alpha}/F} \circ \check{\alpha}$ is equal to $0$ is as follows.
\begin{lemma}[{\cite[Lemma~74]{2021arXiv210100658S}}]
\label{zero}
If the depth of $\theta \circ N_{F_{\alpha}/F} \circ \check{\alpha}$ is equal to $0$, then the depth of $\psi_{\alpha}$ is equal to $0$. Moreover, $\psi_{\alpha}$ is not an unramifed character, i.\,e., $\psi_{\alpha}$ is not trivial on the inertia subgroup of $W_{\alpha}$.
\end{lemma}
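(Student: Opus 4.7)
The plan is to adapt the proof of \cite[Lemma~74]{2021arXiv210100658S} to the non-singular setting, using the explicit description of the $\chi$-data on $R_{0+}$ and the non-singularity of $(S, \theta)$. The setup matches that of \cite[Section~4.5]{2021arXiv210100658S}, and we have already noted that \cite[Corollary~67]{2021arXiv210100658S} continues to hold in our situation.

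First, I would recall the explicit formula for $\psi_\alpha$. By the construction of the $L$-embedding ${^{L}\!j}\colon {^{L}S} \to {^{L}G}$ from the $\chi$-data in \cite[6.1]{2019arXiv190705173K} (cf.\ \cite[2.6]{MR909227}), the character $\psi_\alpha$ of $W_\alpha$, pulled back via the Artin reciprocity isomorphism $F_\alpha^\times \cong W_\alpha^{\mathrm{ab}}$, decomposes as the product of $\theta \circ N_{F_\alpha/F} \circ \check\alpha$ and a factor built from $\chi_\alpha$ (up to sign conventions inherited from the construction of ${^{L}\!j}$). This is precisely the identification used at the start of \cite[Lemma~74]{2021arXiv210100658S}.

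For the depth assertion, both $\chi_\alpha$ (by the tame ramification hypothesis on $\chi_0$) and $\theta \circ N_{F_\alpha/F} \circ \check\alpha$ (by the hypothesis of the lemma) are trivial on $(F_\alpha)_{0+}^\times$. Hence $\psi_\alpha$ is trivial on $(F_\alpha)_{0+}^\times$, so $\text{depth}(\psi_\alpha)\le 0$; since depth is non-negative by definition, equality holds.

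For the non-unramifiedness, I would split into cases according to the symmetry of $\alpha$. If $\alpha$ is asymmetric, then by the construction of the $\chi$-data on $R_{0+}$ recalled in Section~\ref{seclpara} we have $\chi_\alpha = 1$, so $\psi_\alpha$ equals $\theta \circ N_{F_\alpha/F} \circ \check\alpha$; the $k_F$-non-singularity of $(S, \theta)$ \cite[Definition~3.4.1]{2019arXiv191203274K} applied to $\alpha \in R_{0+}$ forces this character to be non-trivial on $\mathcal{O}_{F_\alpha}^\times$. If $\alpha$ is symmetric, then by the remark preceding the lemma $\alpha$ is unramified, and by construction $\chi_\alpha$ is the non-trivial unramified quadratic character of $F_\alpha^\times$, whose restriction to $\mathcal{O}_{F_\alpha}^\times$ is the pullback of the non-trivial character of $k_{F_\alpha}^\times/(k_{F_\alpha}^\times)^2$; combined with the non-singularity input on the restriction of $\theta \circ N_{F_\alpha/F} \circ \check\alpha$ to the units, one concludes that the product $\psi_\alpha$ is non-trivial on $\mathcal{O}_{F_\alpha}^\times$. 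The main obstacle will be the symmetric case, since both $\chi_\alpha$ and $\theta \circ N_{F_\alpha/F} \circ \check\alpha$ can be non-trivial on the units, so one must rule out the coincidence that their restrictions to $\mathcal{O}_{F_\alpha}^\times$ agree and cancel in the product; the $k_F$-non-singularity of $(S, \theta)$ is exactly what prevents this coincidence, and isolating this input is the technical heart of the argument, after which the proof follows that of \cite[Lemma~74]{2021arXiv210100658S} verbatim.
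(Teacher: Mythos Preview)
Your approach is a direct case analysis on the symmetry type of $\alpha$, whereas the paper argues by reduction: since $\text{depth}(\theta\circ N_{F_\alpha/F}\circ\check\alpha)=0$ forces $\alpha\in R_{0+}=R(G^0,S)$, one factors ${^{L}\!j}={^{L}\!j_{G^0,G}}\circ{^{L}\!j_{S,G^0}}$ via \cite[Proposition~6.9]{2019arXiv190705173K}, observes that the embedding $\widehat{\mathfrak g}^0\hookrightarrow\widehat{\mathfrak g}$ is ${^{L}\!j_{G^0,G}}$-equivariant, and thereby reduces to $G=G^0$, where $jS$ is maximally unramified and \cite[Lemma~72, Remark~73]{2021arXiv210100658S} apply verbatim. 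This reduction avoids any explicit manipulation of the formula for $\psi_\alpha$.

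Your direct argument contains a genuine error in the symmetric case. You assert that the non-trivial \emph{unramified} quadratic character $\chi_\alpha$ of $F_\alpha^\times$ restricts to the non-trivial character of $k_{F_\alpha}^\times/(k_{F_\alpha}^\times)^2$ on $\mathcal{O}_{F_\alpha}^\times$; this is self-contradictory, since ``unramified'' means precisely that $\chi_\alpha$ is trivial on $\mathcal{O}_{F_\alpha}^\times$. Once corrected, the feared cancellation disappears entirely: in both the asymmetric and symmetric cases the $\chi$-data contribute trivially on units, so $\psi_\alpha\!\restriction_{\mathcal{O}_{F_\alpha}^\times}$ coincides with $(\theta\circ N_{F_\alpha/F}\circ\check\alpha)\!\restriction_{\mathcal{O}_{F_\alpha}^\times}$, and the ``technical heart'' you anticipate is empty. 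What remains is to check that this last character is non-trivial on $\mathcal{O}_{F_\alpha}^\times$ for every $\alpha\in R_{0+}$; note that the ambient hypothesis here is $F$-non-singularity (from the torally wild $L$-packet datum), not $k_F$-non-singularity as you wrote, so you would still need to unpack \cite[Definition~3.4.1]{2019arXiv191203274K} with some care. The paper's reduction sidesteps this bookkeeping by pushing everything into $G^0$ and quoting Schwein.
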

\begin{proof}
According to \cite[Lemma~72]{2021arXiv210100658S} and \cite[Remark~73]{2021arXiv210100658S}, the claim follows when $jS$ is maximally unramified in $G$ in the sense of \cite[Definition~3.4.2]{MR4013740}.

Moreover, according to \cite[Proposition~6.9]{2019arXiv190705173K}, we obtain the decomposition ${^{L}\!j}={^{L}\!j_{G^0, G}}\circ {^{L}\!j_{S, G^0}}$, where $G^0$ is the twisted Levi subgroup of $G$ with maximal torus $jS$ and satisfying $R(G^0, S)=R_{0+}$, $^{L}j_{S, G^0}\colon {^{L}S }\to {^{L}G^{0}}$ is the extension of $\widehat{S} \to \widehat{G^0}$ obtained from the unique minimally ramified $\chi$-data for $R(G^0, S)$, and $^{L}\!j_{G^0, G}\colon \widehat{G^0} \to \widehat{G}$ is the extension of $\widehat{G^0} \to \widehat{G}$ obtained from the $\chi$-data $\chi_0$ for $R(G, S^0)$ \cite[6.1]{2019arXiv190705173K}. 
Since the depth of $\theta \circ N_{F_{\alpha}/F} \circ \check{\alpha}$ is equal to $0$, we obtain from the definition of $R_{0+}$ that $\alpha\in R(G^0, S)$. From the construction of $^{L}\!j_{G^0, G}$, we obtain that the embedding $\widehat{\mathfrak{g}}^0 \to \widehat{\mathfrak{g}}$ is $^{L}\!j_{G^0, G}$-equivariant, where $\widehat{\mathfrak{g}}^0$ denotes the Lie algebra of $G^0$.
In this way, we reduce the case $G=G^0$, where $jS$ is maximally unramified in $G$.
\end{proof}
Now, we calculate $\abs{\gamma(0, V_{\text{root}})}$.
Recall that the $F$-non-singular pair $(jS, \theta_j)$ determines a sequence of twisted Levi subgroups
\[\overrightarrow{G}=\left(jS=G^{-1}\subset G^0\subsetneq \ldots \subsetneq G^d=G\right)\]
and a sequence of real numbers $\overrightarrow{r}=\left(0=r_{-1}, r_0, \ldots , r_d\right)$ \cite[3.6]{MR4013740}.
\begin{lemma}[{\cite[Lemma~76]{2021arXiv210100658S}}]
\label{root}
The absolute value $\abs{\gamma(0, V_{\Root})}$ is equal to 
\[\exp_{q}\left(
\frac{1}{2}\abs{R(G, jS)} + \frac{1}{2}\sum_{i=0}^{d} r_i(\abs{R(G^{i+1}, jS)}-\abs{R(G^i, jS)})
\right).\]
\end{lemma}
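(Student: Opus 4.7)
The plan is to follow Schwein's proof of \cite[Lemma~76]{2021arXiv210100658S}, which handles the regular case, replacing his depth analysis with the combination of Lemmas~\ref{pos} and \ref{zero} so that the depth-zero ramified roots in $R_{0+}$ --- which do not occur in the regular case but do in ours --- are handled on the same footing.

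First, by additivity of the $\gamma$-factor on direct sums and inductivity (with Langlands' $\lambda$-constants of absolute value $1$), I would reduce to computing
\[\abs{\gamma(0, V_{\Root})} = \prod_{\underline{\alpha} \in \underline{R}(G, jS)} \abs{\gamma(0, \psi_\alpha, \psi \circ \text{tr}_{F_\alpha/F})}.\]
For each ramified character $\psi_\alpha$, the absolute value on the right is controlled by the Artin conductor $a(\psi_\alpha)$ together with the valuation of the different $\mathfrak{d}_{F_\alpha/F}$; since $F_\alpha/F$ is tame, the latter equals $e_{F_\alpha/F} - 1$.

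Next, I would dichotomize on the depth of $\theta\circ N_{F_\alpha/F}\circ\check\alpha$. When this depth is a positive $r_i$ (equivalently, $\alpha \in R(G^{i+1}, jS)\setminus R(G^i, jS)$ for some $0\le i\le d-1$), Lemma~\ref{pos} identifies the depth of $\psi_\alpha$ with $r_i$, so $a(\psi_\alpha) = r_i \cdot e_{F_\alpha/F} + 1$. When the depth is $0$ (equivalently, $\alpha \in R(G^0, jS)=R_{0+}$), Lemma~\ref{zero} guarantees that $\psi_\alpha$ is still ramified of depth $0$, so $a(\psi_\alpha) = 1$. In either case the per-root contribution to $\log_q\abs{\gamma}$ works out to $\tfrac{1}{2}(r_i + 1)$: the ``$\tfrac{1}{2}$'' absorbs the Artin ``$+1$'' together with the different via $1 + (e_{F_\alpha/F} - 1) = e_{F_\alpha/F}$, and the ``$\tfrac{r_i}{2}$'' is the depth contribution (with $r_i = 0$ for roots in $R_{0+}$).

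Finally, I would sum over all roots in $R(G, jS)$, using $[F_\alpha : F] = \abs{\underline{\alpha}}$ to convert per-orbit factors into per-root ones. The ``$\tfrac{1}{2}$'' per root accumulates to $\tfrac{1}{2}\abs{R(G, jS)}$, and the ``$\tfrac{r_i}{2}$'' per root telescopes into $\tfrac{1}{2}\sum_{i=0}^{d-1} r_i(\abs{R(G^{i+1}, jS)} - \abs{R(G^i, jS)})$, yielding the claimed formula. The main obstacle is the careful bookkeeping required to interchange sums over $\Gamma_F$-orbits with sums over individual roots while tracking residue-field sizes, ramification indices, and the tame different in parallel; this is precisely the calculation Schwein carries out in the regular case, and Lemma~\ref{zero} is exactly the ingredient that lets us extend it verbatim to the non-singular setting.
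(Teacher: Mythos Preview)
Your proposal is correct and follows essentially the same approach as the paper: the paper's own proof is the one-line remark that the claim follows from Lemmas~\ref{pos} and \ref{zero} exactly as in \cite[Lemma~76]{2021arXiv210100658S}, and your write-up is a faithful unpacking of precisely that argument. The only cosmetic point is that your final telescoped sum runs over $0\le i\le d-1$ while the displayed statement runs to $i=d$; since $G^{d}=G$ the top term vanishes, so there is no discrepancy.
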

\begin{proof}
The claim follows from Lemma~\ref{pos} and \ref{zero} as \cite[Lemma~76]{2021arXiv210100658S}.
\end{proof}
Combining Lemma~\ref{toral} and Lemma~\ref{root}, we obtain the following result.
\begin{proposition}
The absolute value $\abs{\gamma(0, \pi, \Ad, \psi)}$ is equal to
\[\frac{\abs{M_{\Frob}}}
{\abs{(\overline{k_F}^{\times}\otimes M^{\vee})^{\Frob}}}\exp_q\left(
\frac{1}{2}\dim(G^{\Aa}) + \frac{1}{2}\rank(M) + \frac{1}{2}\sum_{i=0}^{d} r_i(\abs{R(G^{i+1}, jS)}-\abs{R(G^i, jS)})
\right).\]
\end{proposition}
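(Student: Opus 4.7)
The plan is to invoke the multiplicativity of $\gamma$-factors with respect to direct sums of $W_F$-representations, apply the two lemmas already established in this section, and then simplify the resulting exponential exponent by a single dimension-counting identity.

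Concretely, starting from the decomposition
\[
\Ad \circ \varphi = V_{\toral} \oplus V_{\Root}
\]
recorded just before Lemma~\ref{toral}, multiplicativity gives
\[
\abs{\gamma(0, \pi, \Ad, \psi)} = \abs{\gamma(0, \Ad \circ \varphi)} = \abs{\gamma(0, V_{\toral})} \cdot \abs{\gamma(0, V_{\Root})}.
\]
Lemma~\ref{toral} computes the first factor as
\[
\exp_{q}\!\left(\tfrac{1}{2}\bigl(\dim(S^{\Aa}) + \rank(M)\bigr)\right) \cdot \frac{\abs{M_{\Frob}}}{\abs{(\overline{k_F}^{\times}\otimes M^{\vee})^{\Frob}}},
\]
while Lemma~\ref{root} computes the second as
\[
\exp_{q}\!\left(\tfrac{1}{2}\abs{R(G, jS)} + \tfrac{1}{2}\sum_{i=0}^{d} r_i\bigl(\abs{R(G^{i+1}, jS)} - \abs{R(G^{i}, jS)}\bigr)\right).
\]
Multiplying these and collecting the exponents under $\exp_{q}$ yields the asserted formula, provided one knows the identity
\[
\dim(S^{\Aa}) + \abs{R(G, jS)} = \dim(G^{\Aa}).
\]

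This last identity is the only nontrivial bookkeeping, and it is straightforward: since $jS$ is a maximal torus of $G$, the standard root space decomposition gives $\dim(G) = \dim(jS) + \abs{R(G, jS)}$, and quotienting by the split central torus $A$ (which is contained in $jS$ via the admissible embedding $j$) subtracts $\dim(A)$ from both $\dim(G)$ and $\dim(jS)$ while leaving the root count unchanged, so $\dim(G^{\Aa}) = \dim(S^{\Aa}) + \abs{R(G, jS)}$. Substituting this into the sum of exponents produced above replaces $\dim(S^{\Aa}) + \abs{R(G, jS)}$ by $\dim(G^{\Aa})$ and delivers the displayed expression.

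There is no real obstacle here: the work has been done in Lemma~\ref{toral} and Lemma~\ref{root}, and the only subtle point is ensuring that $A$ is identified correctly as a subgroup of $jS$, which is the content of the convention fixed at the start of Section~\ref{secend} and reaffirmed at the start of Section~\ref{seccomp}, together with the observation that $S_{\varphi}^{+}$ is independent of the admissible embedding used to realize this inclusion. Once this compatibility is in place, the proposition reduces to the arithmetic just described.
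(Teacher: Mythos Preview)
Your proof is correct and matches the paper's approach exactly: the paper simply states that the proposition follows by ``combining Lemma~\ref{toral} and Lemma~\ref{root},'' and you have spelled out precisely that combination together with the elementary identity $\dim(S^{\Aa}) + \abs{R(G, jS)} = \dim(G^{\Aa})$. The closing remark about $S_{\varphi}^{+}$ is not needed here, but it does no harm.
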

According to \cite[Lemma~78]{2021arXiv210100658S}, the rank of $M=X^{*}(S^{\text{a}})^{I_F}$ is equal to the rank of $(\mathsf{G^{\text{a}, 0}})_{[y]}^\circ$.
Moreover, according to \cite[Lemma~5.13, Lemma~5.17, Lemma~5.18]{MR3402796}, we see
\[\frac{\abs{M_{\text{Frob}}}}
{\abs{(\overline{k_F}^{\times}\otimes M^{\vee})^{\text{Frob}}}}
=
\frac{\abs{\pi_0([\widehat{\overline{S}}]^{\natural})}}{\abs{S^{\text{a}}(F)/S^{\text{a}}(F)_{0+}}},
\]
where $[\widehat{\overline{S}}]^{\natural}$ denotes the preimage of $\widehat{S}^{\Gamma_F}$ via the map $\widehat{S/A} \to \widehat{S}$, and $\pi_0([\widehat{\overline{S}}]^{\natural})$ denotes the group of connected components of $[\widehat{\overline{S}}]^{\natural}$ (see \cite[Section~5]{2021arXiv210100658S}).
Hence, we obtain
\[\abs{\gamma(0, \pi, \text{Ad}, \psi)}= \frac{\abs{\pi_0([\widehat{\overline{S}}]^{\natural})}\cdot\exp_{q} \left(
\frac{1}{2} \dim(G^{\text{a}}) +
\frac{1}{2}\text{rank}((\mathsf{G^{\text{a}, 0}})_{[y]}^\circ) +
\frac{1}{2}\sum_{i=0}^{d-1} r_i \left(\abs{R(G^{i+1}, jS) - R(G^{i}, jS)}\right)
\right)}
{\abs{S^{\text{a}}(F)/S^{\text{a}}(F)_{0+}}}.\]
Therefore, to prove Conjecture~\ref{fdc}, it is enough to show the following lemma.
\begin{lemma}
\[
\abs{\pi_0(S_{\varphi}^{+})/\pi_0(S_{\varphi}^{+})_{\eta}}\abs{N(jS, G)(F)_{\theta_j}/jS(F)}
=
\frac{\abs{\pi_0(S_{\varphi}^{\natural})}}{\abs{\pi_0([\widehat{\overline{S}}]^{\natural})}}.
\]
\end{lemma}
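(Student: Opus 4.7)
The plan is to show that both sides of the identity are equal to $\abs{\Omega(S,G)(F)_{\theta}}$.

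For the left-hand side, I would use the exact sequence
\[1 \to \pi_0([\widehat{\overline{S}}]^{+}) \to \pi_0(S_{\varphi}^{+}) \to \Omega(S, G)(F)_{\theta} \to 1\]
from \cite[Corollary~4.3.4]{2019arXiv191203274K}. Since $[\widehat{\overline{S}}]^{+}$ sits inside the torus $\widehat{S/Z}$, its component group is abelian, so the conjugation action of $\pi_0(S_{\varphi}^{+})$ on $\pi_0([\widehat{\overline{S}}]^{+})$ (and hence on the character $\eta$) factors through the quotient $\Omega(S,G)(F)_{\theta}$. In particular, $\pi_0(S_{\varphi}^{+})_{\eta}$ is the full preimage of $\Omega(S,G)(F)_{\theta,\eta}$, so
\[\abs{\pi_0(S_{\varphi}^{+})/\pi_0(S_{\varphi}^{+})_{\eta}} = \abs{\Omega(S,G)(F)_{\theta}/\Omega(S,G)(F)_{\theta,\eta}}.\]
Multiplying this by the identification $\abs{N(jS,G)(F)_{\theta_j}/jS(F)} = \abs{\Omega(S,G)(F)_{\theta,\eta}}$ (which is exactly the isomorphism noted in Section~\ref{secend} via \cite[Lemma~3.4.10]{MR4013740} and \cite[Lemma~E.1]{2019arXiv191203274K}) collapses the telescope, leaving $\abs{\Omega(S,G)(F)_{\theta}}$.

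For the right-hand side, I would establish the analogous exact sequence
\[1 \to \pi_0([\widehat{\overline{S}}]^{\natural}) \to \pi_0(S_{\varphi}^{\natural}) \to \Omega(S, G)(F)_{\theta} \to 1\]
by adapting Kaletha's proof of Corollary~4.3.4, with the role of $Z$ played by $A$. The essential input is formal: the projection $\widehat{G/A} \to \widehat{G}$ restricts to a surjection $S_{\varphi}^{\natural} \twoheadrightarrow S_{\varphi}$ whose kernel is identified with the kernel of $\widehat{S/A} \to \widehat{S}$ and therefore lies in $[\widehat{\overline{S}}]^{\natural}$. Hence $S_{\varphi}^{\natural}/[\widehat{\overline{S}}]^{\natural}$ is canonically isomorphic to $S_{\varphi}/\widehat{S}^{\Gamma_F}$, and taking $\pi_0$ of this complex reductive quotient yields $\Omega(S,G)(F)_{\theta}$, giving $\abs{\pi_0(S_{\varphi}^{\natural})}/\abs{\pi_0([\widehat{\overline{S}}]^{\natural})} = \abs{\Omega(S,G)(F)_{\theta}}$, which matches the right-hand side.

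The main obstacle is the last step: verifying that Kaletha's exact sequence for the $^{+}$ variant carries over to the $^{\natural}$ variant. The abstract structural features (normality of the toral subgroup, identification of the quotient with $\Omega(S,G)(F)_{\theta}$) do not use any special property of $Z$ over $A$, but one must carefully track how $\pi_0$ interacts with the short exact sequence of complex groups, in particular confirming that the surjection onto $\Omega(S,G)(F)_{\theta}$ survives the passage to component groups. This is routine once set up, but it requires rerunning the Tate-duality style computation in the proof of \cite[Proposition~4.3.2]{2019arXiv191203274K} with $A$ in place of $Z$.
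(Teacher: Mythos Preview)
Your proposal is correct and follows essentially the same route as the paper: both sides are shown to equal $\abs{\Omega(S,G)(F)_{\theta}}$, using \cite[Corollary~4.3.4]{2019arXiv191203274K} together with \cite[Lemma~3.4.10]{MR4013740} and \cite[Lemma~E.1]{2019arXiv191203274K} on the left, and the $\natural$-analogue of the exact sequence on the right. The one simplification the paper makes over your sketch is that for the $\natural$ exact sequence there is no need to rerun any Tate-duality computation: the paper simply cites \cite[Proposition~4.3.2]{2019arXiv191203274K} for the sequence $1\to \widehat{S}^{\Gamma_F}\to S_{\varphi}\to \Omega(S,G)(F)_{\theta}\to 1$ and then pulls it back along $\widehat{G/A}\to\widehat{G}$, exactly as Corollary~4.3.4 is obtained from Proposition~4.3.2 with $Z$ in place of $A$.
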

\begin{proof}
According to \cite[Proposition~4.3.2]{2019arXiv191203274K}, $j$ induces an exact sequence
\[1\to \widehat{S}^{\Gamma_F} \to S_{\varphi} \to \Omega(S, G)(F)_{\theta} \to 1.\]
From this exact sequence, we obtain an exact sequence
\[1\to \pi_0([\widehat{\overline{S}}]^{\natural}) \to \pi_0(S_{\varphi}^{\natural}) \to \Omega(S, G)(F)_{\theta} \to 1.\]
Therefore, the right hand side of the lemma is equal to $\abs{\Omega(S, G)(F)_{\theta}}$.

On the other hand, \cite[Lemma~3.4.10]{MR4013740} and \cite[Lemma~E.1]{2019arXiv191203274K} imply that $j$ gives an isomorphism
\[\Omega(S, G)(F)_{\theta, \eta} \longrightarrow N(jS, G)(F)_{\theta_j}/jS(F).\]

Moreover, \cite[Corollary~4.3.4]{2019arXiv191203274K} and its pull back along the inclusion $\Omega(S, G)(F)_{\theta, \eta} \to \Omega(S, G)(F)_{\theta}$ give the two exact sequences
\begin{align*}
&1 \to \pi_0([\widehat{\overline{S}}]^{+}) \to \pi_0(S_{\varphi}^{+}) \to \Omega(S, G)(F)_{\theta} \to 1,\\
&1 \to \pi_0([\widehat{\overline{S}}]^{+}) \to \pi_0(S_{\varphi}^{+})_{\eta} \to \Omega(S, G)(F)_{\theta, \eta} \to 1.
\end{align*}
Hence, we obtain that
\begin{align*}
(\text{LHS}) &= \abs{\pi_0(S_{\varphi}^{+})/\pi_0(S_{\varphi}^{+})_{\eta}}\abs{N(jS, G)(F)_{\theta_j}/jS(F)}\\
&=\abs{\pi_0(S_{\varphi}^{+})/\pi_0([\widehat{\overline{S}}]^{+})}{\abs{\pi_0(S_{\varphi}^{+})_{\eta}/\pi_0([\widehat{\overline{S}}]^{+})}}^{-1} \abs{\Omega(S, G)(F)_{\theta, \eta}}\\
&= \abs{\Omega(S, G)(F)_{\theta}}{\abs{\Omega(S, G)(F)_{\theta, \eta}}}^{-1} \abs{\Omega(S, G)(F)_{\theta, \eta}}\\
&= \abs{\Omega(S, G)(F)_{\theta}}\\
&=(\text{RHS}).
\end{align*}
\end{proof}
\providecommand{\bysame}{\leavevmode\hbox to3em{\hrulefill}\thinspace}
\providecommand{\MR}{\relax\ifhmode\unskip\space\fi MR }
% \MRhref is called by the amsart/book/proc definition of \MR.
\providecommand{\MRhref}[2]{%
  \href{http://www.ams.org/mathscinet-getitem?mr=#1}{#2}
}
\providecommand{\href}[2]{#2}


\begin{thebibliography}{CAR85}

\bibitem[Art06]{MR2217572}
James Arthur, \emph{A note on {$L$}-packets}, Pure Appl. Math. Q. \textbf{2}
  (2006), no.~1, Special Issue: In honor of John H. Coates. Part 1, 199--217.
  \MR{2217572}

\bibitem[CAR85]{10013123614}
R.~W. CARTER, \emph{Finite groups of lie type : conjugacy classes and complex
  characters}, Pure Appl. Math. \textbf{44} (1985).

\bibitem[DL76]{MR393266}
P.~Deligne and G.~Lusztig, \emph{Representations of reductive groups over
  finite fields}, Ann. of Math. (2) \textbf{103} (1976), no.~1, 103--161.
  \MR{393266}

\bibitem[FKS]{FKS2021}
Jessica Fintzen, Tasho Kaletha, and Loren Spice, \emph{A twisted {Y}u
  construction, {H}arish-{C}handra characters, and endoscopy}, in preparation.

\bibitem[GG99]{MR1458303}
Benedict~H. Gross and Wee~Teck Gan, \emph{Haar measure and the {A}rtin
  conductor}, Trans. Amer. Math. Soc. \textbf{351} (1999), no.~4, 1691--1704.
  \MR{1458303}

\bibitem[HII08a]{MR2425185}
Kaoru Hiraga, Atsushi Ichino, and Tamotsu Ikeda, \emph{Correction to:
  ``{F}ormal degrees and adjoint {$\gamma$}-factors'' [{J}. {A}mer. {M}ath.
  {S}oc. {\bf 21} (2008), no. 1, 283--304; mr2350057]}, J. Amer. Math. Soc.
  \textbf{21} (2008), no.~4, 1211--1213. \MR{2425185}

\bibitem[HII08b]{MR2350057}
\bysame, \emph{Formal degrees and adjoint {$\gamma$}-factors}, J. Amer. Math.
  Soc. \textbf{21} (2008), no.~1, 283--304. \MR{2350057}

\bibitem[Kal15]{MR3402796}
Tasho Kaletha, \emph{Epipelagic {$L$}-packets and rectifying characters},
  Invent. Math. \textbf{202} (2015), no.~1, 1--89. \MR{3402796}

\bibitem[Kal18]{MR3815567}
\bysame, \emph{Global rigid inner forms and multiplicities of discrete
  automorphic representations}, Invent. Math. \textbf{213} (2018), no.~1,
  271--369. \MR{3815567}

\bibitem[Kal19a]{2019arXiv190705173K}
\bysame, \emph{{On L-embeddings and double covers of tori over local fields}},
  arXiv e-prints (2019), arXiv:1907.05173.

\bibitem[Kal19b]{MR4013740}
\bysame, \emph{Regular supercuspidal representations}, J. Amer. Math. Soc.
  \textbf{32} (2019), no.~4, 1071--1170. \MR{4013740}

\bibitem[Kal19c]{2019arXiv191203274K}
\bysame, \emph{{Supercuspidal L-packets}}, arXiv e-prints (2019),
  arXiv:1912.03274.

\bibitem[LS87]{MR909227}
R.~P. Langlands and D.~Shelstad, \emph{On the definition of transfer factors},
  Math. Ann. \textbf{278} (1987), no.~1-4, 219--271. \MR{909227}

\bibitem[MP94]{MR1253198}
Allen Moy and Gopal Prasad, \emph{Unrefined minimal {$K$}-types for {$p$}-adic
  groups}, Invent. Math. \textbf{116} (1994), no.~1-3, 393--408. \MR{1253198}

\bibitem[MP96]{MR1371680}
\bysame, \emph{Jacquet functors and unrefined minimal {$K$}-types}, Comment.
  Math. Helv. \textbf{71} (1996), no.~1, 98--121. \MR{1371680}

\bibitem[Sch21]{2021arXiv210100658S}
David Schwein, \emph{{Formal Degree of Regular Supercuspidals}}, arXiv e-prints
  (2021), arXiv:2101.00658.

\bibitem[SS70]{springer1970conjugacy}
Tonny~A Springer and Robert Steinberg, \emph{Conjugacy classes}, Seminar on
  algebraic groups and related finite groups, Springer, 1970, pp.~167--266.

\bibitem[Yu01]{MR1824988}
Jiu-Kang Yu, \emph{Construction of tame supercuspidal representations}, J.
  Amer. Math. Soc. \textbf{14} (2001), no.~3, 579--622. \MR{1824988}

\bibitem[Yu09]{MR2508725}
\bysame, \emph{On the local {L}anglands correspondence for tori}, Ottawa
  lectures on admissible representations of reductive {$p$}-adic groups, Fields
  Inst. Monogr., vol.~26, Amer. Math. Soc., Providence, RI, 2009, pp.~177--183.
  \MR{2508725}

\end{thebibliography}
\end{document}